\def\be{\begin{equation}}
\def\ee{\end{equation}}
\newtheorem{thm}{Theorem}[section]
\newtheorem{corollary}[thm]{Corollary}
\newtheorem{lemma}[thm]{Lemma}
\newtheorem{proposition}[thm]{Proposition}
\newtheorem{conjecture}[thm]{Conjecture}
\theoremstyle{remark}
\newtheorem{remark}{Remark}[section]
\numberwithin{equation}{section}
\def\note#1
\newcommand{\N}{{\mathbb N}}
\newcommand{\R}{{\mathbb R}}
\newcommand{\T}{{\mathbb T}}
\newcommand{\Z}{{\mathbb Z}}
\newcommand{\la}{\langle}
\newcommand{\ra}{\rangle}
\newcommand{\mbf}{\mathbf}
\begin{document}

\title[Averages along the squares]{Averages along the Square  Integers: 
$\ell^p$ improving  and  Sparse Inequalities} 

\author[Han] {Rui Han} 
\address{ School of Mathematics, Georgia Institute of Technology, Atlanta GA 30332, USA}
\email {rui.han@math.gatech.edu}
\thanks{RH: Research supported in part by National Science Foundation grant DMS-1800689.}

\author[Lacey] {Michael T. Lacey} 
\thanks{MTL: Research supported in part by grant National Science Foundation grant DMS-1600693, and by Australian Research Council grant DP160100153. }
\address{ School of Mathematics, Georgia Institute of Technology, Atlanta GA 30332, USA}
\email {lacey@math.gatech.edu}

\author[Yang]{ Fan Yang}
\address{ School of Mathematics, Georgia Institute of Technology, Atlanta GA 30332, USA}
\email {ffyangmath@gmail.com}
\thanks{FY: Research supported in part by AMS-Simons Travel grant 2019-2021.}

\begin{abstract}
Let $f\in \ell^2(\Z)$. Define the average of  $ f$ over the square integers  by 
\begin{align}
A_N f(x)&:=\frac{1}{N}\sum_{k=1}^N f(x+k^2), 
\end{align}
We show that $ A_N$ satisfies a local  scale-free $ \ell ^{p}$-improving estimate, for $ 3/2 < p \leq 2$: 
\begin{equation*}
 N ^{-2/p'} \lVert A_N f \rVert _{ \ell ^{p'}} \lesssim N ^{-2/p} \lVert f\rVert _{\ell ^{p}},  
\end{equation*}
provided $ f$ is supported in some interval of length $ N ^2 $, and $ p' =\frac{p} {p-1}$ is the conjugate index. 
The inequality above fails for $ 1< p < 3/2$. 
The maximal function 
$ A f = \sup _{N\geq 1} |A_Nf| $ satisfies a similar sparse bound. Novel weighted and vector valued inequalities for $ A$ follow.  
A critical step in the proof requires the control of a logarithmic average over $ q$ of a function $G(q,x)$ counting the number of square roots 
of $x$ mod $q$.  One requires an estimate uniform in $x$.  
\end{abstract}

\maketitle
\setcounter{tocdepth}{1} 
\tableofcontents

\section{Introduction} 
The investigation of $ L^{p}$ improving properties of averages formed over submanifolds has been under intensive investigation in Harmonic Analysis  since 
first results for spherical averages by Littman \cite{MR0358443} and Strichartz  \cite{MR0256219} in the early 1970's.  
Our focus here is on corresponding questions in the discrete setting, a much more recent topic for investigation.  
For averages over the square integers, we prove a  scale free $ \ell ^{p}$-improving estimate, one that is  sharp, up to the endpoint. 
We then establish sparse bounds for an associated maximal function.  The latter implies novel weighted and vector valued inequalities.  

Let $f\in \ell^2(\Z)$. Define the average over the square integers by 
\begin{align}
A_N f(x):=\frac{1}{N}\sum_{k=1}^N f(x+k^2).
\end{align}
For a function $f$ on $\Z$, and an interval $I\subset \Z$, define 
\begin{align}\label{def:p_norm}
\langle f\rangle_{I,p}:=\left(\frac{1}{|I|} \sum_{x\in I} |f(x)|^p\right)^{1/p}
\end{align}
to be the normalized $\ell^p$ norm on $I$.
Throughout the paper, if $I=[a,b]\cap \Z$, with $a,b\in \Z$, is an interval on $\Z$,  let $2I=[a, 2b-a+1]\cap \Z$ be the doubled interval (on the right-hand-side), let $3I=[2a-b-1,2b-a+1]$ be the tripled interval which has the same center as $I$.

The first theorem we prove is the following local, scale free, $\ell^p$ improving estimate for $A_N$. 
It is sharp in the index $ p$, and the only such result that is currently known.  

\begin{thm}\label{thm:improving}
For any $3/2< p \leq 2$, there is a constant $ C_p$ so that 
for any integer $N\geq 1$, and for any interval $I$ with length $N^2$, and any function $f$ supported on $2I$,  we have
\begin{align*}
\langle A_N f\rangle_{I,p'}\lesssim C_p \langle f\rangle_{2I,p}.
\end{align*}
Above $ p' = \frac{p} {p-1}$. The inequality above cannot hold for $ 1< p < 3/2$. 
\end{thm}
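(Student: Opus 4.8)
The plan is to prove the positive direction by interpolation between two endpoint estimates, and the negative direction by exhibiting explicit test functions. Since $A_N$ is a normalized average, it maps $\ell^1 \to \ell^1$ with the trivial bound $\langle A_N f\rangle_{I,1} \lesssim \langle f\rangle_{2I,1}$ (this uses the localization: mass of $f$ on $2I$ stays in $I$ after translation by $k^2 \in [1,N^2]$). At the other end, the decisive estimate is an $\ell^2 \to \ell^\infty$ type bound, or equivalently $\ell^1 \to \ell^2$, at a suitable scale. Interpolating an $\ell^1\to\ell^1$ bound against an $\ell^{3/2}\to\ell^3$ (or $\ell^{(3/2)^-}$) bound gives the full range $3/2 < p \le 2$ by the Riesz–Thorin theorem (after rescaling the normalized norms appropriately). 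So the real content is a single endpoint-type inequality near $p = 3/2$.

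Let me think about where $3/2$ comes from. The key is counting: for a point $y \in I$, $A_N f(x)$ at $x$ pulls in values $f(x+k^2)$; dually, $\langle A_N f, g\rangle = \frac1N \sum_k \langle f(\cdot + k^2), g\rangle$. The $\ell^p$-improving strength is governed by how many pairs $(k,\ell)$ with $1 \le k,\ell \le N$ satisfy $k^2 - \ell^2 = m$ for a fixed $m$, i.e. the number of representations of $m$ as a difference of two squares in the range $[1,N]$. Equivalently one studies $\|A_N^* A_N\|$ on various spaces, and $A_N^* A_N f(x) = \frac1{N^2}\sum_{k,\ell} f(x + k^2 - \ell^2)$; the multiplicity function $r(m) = \#\{(k,\ell): k^2-\ell^2 = m, 1\le k,\ell\le N\}$ has an $\ell^\infty$ bound of roughly $N^\epsilon$ (divisor-type bound, since $k^2-\ell^2 = (k-\ell)(k+\ell)$) but is supported on $\sim N^2$ values of $m$ of size $\lesssim N^2$, with total mass $N^2$. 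The borderline $p=3/2$ corresponds to the point where the trivial $\ell^\infty$ multiplicity bound and the $\ell^1$ (total count) bound balance against the circle-method main term. I would establish the $p=3/2$ endpoint (really $p > 3/2$, losing an $\epsilon$ or a $\log$) via a Hardy–Littlewood circle method / Weyl sum analysis of the multiplier $\widehat{A_N}(\theta) = \frac1N\sum_{k=1}^N e(k^2\theta)$: split into major arcs (where Gauss sums give the expected $N^{-1/2}$-type decay with the arithmetic factor $G(q,x)$ mentioned in the abstract) and minor arcs (Weyl's inequality). This is exactly the "critical step" flagged in the abstract: controlling the logarithmic average over $q$ of the square-root counting function $G(q,x)$, uniformly in $x$.

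\textbf{For the converse} — failure for $1 < p < 3/2$ — I would test on a single point mass $f = \delta_0$ (supported on $2I$ with $I$ an interval of length $N^2$ containing $0$ appropriately). Then $A_N \delta_0(x) = \frac1N$ on the $\sim N$ points $x = -k^2$, $k=1,\dots,N$ (all distinct), and $0$ elsewhere. So $\langle A_N f\rangle_{I,p'} = \frac1{N^2} N^{1/p'} \cdot \frac1N = N^{1/p' - 2}$ while $\langle f\rangle_{2I,p} = (N^{-2})^{1/p} = N^{-2/p}$. The claimed inequality would force $N^{1/p'-2} \lesssim N^{-2/p}$, i.e. $1/p' - 2 \le -2/p$, i.e. $1 - 1/p - 2 \le -2/p$, i.e. $1/p \le 1$ — always true, so $\delta_0$ is \emph{not} enough. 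A better test function is needed: take $f$ to be the indicator of a \emph{short} interval, or better, an arithmetic-progression-like set adapted to squares — e.g. $f = \mathbf 1_{[0,H]}$ for $H \ll N^2$. Then $A_N f(x) = \frac1N \#\{k \in [1,N]: x + k^2 \in [0,H]\}$; for $x$ near $0$ this counts $k \lesssim \sqrt H$, giving $A_N f(x) \sim \sqrt H/N$ on an interval of length $\sim H$ near the origin, and more generally near each $-j^2$ the function is $\sim \sqrt{H}/(N\sqrt{j} )$... Choosing $H \sim N$ and tracking the contribution gives $\langle A_N f\rangle_{I,p'}^{p'} \gtrsim N^{-2}\sum_{j\le N} (H/\sqrt j)(\sqrt H/(N\sqrt j))^{p'}$ and one optimizes to see the inequality breaks precisely below $p = 3/2$. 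I expect \textbf{the main obstacle on the positive side to be the uniform-in-$x$ control of $\sum_{q \le Q} q^{-1} G(q,x)$}: the Gauss-sum / major-arc analysis produces the singular series $\mathfrak{S}(x) = \sum_q q^{-1}\sum_{a}^* S(a,q)/q \cdot e(-ax/q)$ where $S(a,q)$ is the quadratic Gauss sum, and $|S(a,q)/q| \sim q^{-1/2}$, so naively $\mathfrak S(x) \lesssim \sum_q q^{-1/2} \cdot (\text{# of } a)$ which diverges — one needs the oscillation in $a$, encoded through the number-of-square-roots function $G(q,x) = \#\{n \bmod q: n^2 \equiv x\}$, together with a genuinely delicate estimate $\sum_{q \le Q} G(q,x)/q \lesssim (\log Q)^{O(1)}$ uniformly in $x$, which on the converse side is also what prevents any improvement beyond $p = 3/2$. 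On the negative side the obstacle is just choosing the sharp test function, which is routine once the arithmetic is understood. I would carry out: (1) the negative direction with the interval/AP test function; (2) set up $A_N^*A_N$ and reduce the positive claim via $TT^*$ and interpolation to an $\ell^{3/2+\epsilon}\to\ell^{3+\epsilon'}$ bound; (3) circle-method decomposition of $\widehat{A_N}$; (4) the arithmetic lemma on $\sum q^{-1}G(q,x)$; (5) assemble.
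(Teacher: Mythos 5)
Your negative direction contains a concrete arithmetic error that derails it. For $f=\delta_0$ the function $A_N\delta_0$ equals $1/N$ on exactly $N$ points of $I$, so
$\langle A_N\delta_0\rangle_{I,p'}=\bigl(N^{-2}\cdot N\cdot N^{-p'}\bigr)^{1/p'}=N^{-1-1/p'}$, not $N^{1/p'-2}$. The inequality $N^{-1-1/p'}\lesssim N^{-2/p}$ is then equivalent to $3/p\le 2$, i.e.\ $p\ge 3/2$: the point mass is already the sharp counterexample, and your conclusion that it ``is not enough'' is mistaken. Worse, the replacement you propose, $f=\mathbf 1_{[0,H]}$ with $H\sim N$, is strictly weaker: a correct count of $\#\{k\le N:\ k^2\in[y,y+H]\}\approx\min(\sqrt H,\,H/\sqrt y)$ gives $\langle A_Nf\rangle_{I,p'}\approx N^{-1/p'-1/2}$ against $\langle f\rangle_{2I,p}=N^{-1/p}$, which fails only for $p<4/3$; optimizing over $H$ leads back to $H=1$. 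So as written your argument does not disprove the inequality on $[4/3,3/2)$. (The paper's example in \S\ref{s:comp} is essentially the dual of your first attempt: $f$ the indicator of the first $N$ squares paired with $g=\delta_0$, forcing $N^{-2}\lesssim N^{-3/p}$.)

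On the positive side you correctly identify the essential content --- the circle-method analysis of the Weyl multiplier and, crucially, the uniform-in-$x$ bound $\sum_{q\le J}q^{-1}|H(q,x)|\lesssim(\log J)^{2}$ for the logarithmically averaged square-root-counting sums, which is exactly Lemma~\ref{lem:low_goal} --- but two structural points need repair. First, the interpolation endpoints are wrong: in the $(1/p,1/q)$ plane the segment joining $\ell^1\to\ell^1$ (the point $(1,1)$) to $\ell^{3/2}\to\ell^{3}$ (the point $(2/3,1/3)$) meets the duality line $1/p+1/q=1$ only at $(2/3,1/3)$ itself, so Riesz--Thorin with these two anchors does not produce $\ell^p\to\ell^{p'}$ for $3/2<p<2$. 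One needs the trivial $\ell^2\to\ell^2$ bound as the other anchor, or, as the paper does, a one-parameter decomposition $A_Nf=H_{N,J}+L_{N,J}$ with $\langle H_{N,J}\rangle_{I,2}\lesssim J^{-1/2}\log J\,\langle f\rangle_{2I,2}$ and $\langle L_{N,J}\rangle_{I,\infty}\lesssim J(\log J)^2\langle f\rangle_{2I,1}$, optimizing $J$ to get restricted weak type at every $p>3/2$ and then interpolating within the open range. Second, the $TT^*$/divisor-counting reduction in your step (2) cannot reach the exponent $3/2$: it only encodes the square-root bound $|H(q,\cdot)|\lesssim\sqrt q$, which (as the paper remarks after Lemma~\ref{lem:low_goal}) yields a nontrivial but non-sharp improving estimate; the sharp result genuinely requires the $(\log J)^2$ lemma applied on the Low Pass piece, so that step should be discarded rather than ``assembled.''
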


Let us define the maximal operator along the square integers:
\begin{align}\label{def:max_A}
Af(x):=\sup_{N\geq 1} |A_Nf|.
\end{align}
The $ \ell ^{p}$ bounds for this maximal function are a famous result of Bourgain \cites{MR916338}. 
We are interested in the sparse bounds,  a recently very active area of investigation.  
We call a collection of intervals $\mathcal{I}$ in $\Z$ {\it sparse} if there are sets $\{E_I:\, I\in \mathcal{I}\}$ which are pairwise disjoint, $E_I\subset I$ and satisfy $|E_I|>\frac{1}{4}|I|$.
The $(r,s)$-sparse form $\Lambda_{\mathcal{I},r,s}$, indexed by the sparse collection $\mathcal{I}$ is 
\begin{align*}
\Lambda_{\mathcal{I},r,s}(f,g)=\sum_{I\in \mathcal{I}} |I| \la f\ra_{2I,r}\, \la g\ra_{I,s}.
\end{align*} 
A sparse bound is a scale-invariant $\ell^p$ improving inequality.  
Our theorem is the following
\begin{thm}\label{thm:sparse}
Let $\mathbf{Z}$ be the triangle with three vertices $Z_1=(0,1)$, $Z_2=(1,0)$, $Z_3=(2/3, 2/3)$, see Figure \ref{f:Sparse_triangle}.  
For all $(1/p, 1/q)$ in the interior of $\mathbf{Z}$, with $f=\chi_F$, $g=\chi_G$, there holds
\begin{align*}
(Af, g)\lesssim \sup_{\mathcal{I}} \Lambda_{\mathcal{I}, p,q}(f,g).
\end{align*}
\end{thm}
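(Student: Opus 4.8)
The plan is to deduce the sparse bound \thmref{thm:sparse} from the scale-free $\ell^p$-improving estimate \thmref{thm:improving} via the now-standard recursive stopping-time argument. Fix a large interval $I_0$ on which we want to bound $(Af \cdot \chi_{I_0}, g)$, and decompose the maximal operator by scale: write $A^{I_0} f = \sup_{N : N^2 \le |I_0|} |A_N f|$ for the part of the supremum at scales no larger than $I_0$, treating the larger scales separately (on an interval of length $|I_0|$ the average $A_N$ with $N^2 > |I_0|$ only sees boundary effects and is easily controlled, or one simply runs the recursion on a grid of dyadic scales). The key point is that \thmref{thm:improving}, combined with the trivial $\ell^\infty \to \ell^\infty$ bound and interpolation, gives for every $(1/p,1/q)$ in the interior of $\mathbf{Z}$ an estimate of the shape
\begin{equation*}
\langle A_N f \rangle_{I, q'} \lesssim \langle f \rangle_{2I, p}
\end{equation*}
whenever $|I| = N^2$ and $f$ is supported on $2I$ — indeed the vertices $Z_1,Z_2$ come from the trivial $\ell^1$ and $\ell^\infty$ bounds and $Z_3 = (2/3,2/3)$ is exactly the $p=3/2$ endpoint of \thmref{thm:improving} (on the dual side $p' = 3$), so the open triangle is the bilinear reformulation of the improving inequality at all admissible scales.

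Next I would set up the stopping time. Given $I_0$, partition $I_0$ into the maximal dyadic-type subintervals $I$ on which either $\langle f \rangle_{2I, p}$ or $\langle g \rangle_{I, q}$ exceeds a large constant multiple of its average over $I_0$; call these the stopping intervals, and let $\mathcal{I}$ be the collection generated by iterating this construction downward through all scales. One shows $\mathcal{I}$ is sparse because the stopping intervals at each generation occupy at most a fixed fraction of their parent (Chebyshev), so one can carve out the disjoint major subsets $E_I$ with $|E_I| > \tfrac14 |I|$. Then decompose
\begin{equation*}
(Af, g) \le \sum_{I \in \mathcal{I}} \bigl( A(f \chi_{3I \setminus \bigcup_{I' \subsetneq I} 3I'}) , g \chi_{E_I} \bigr) + (\text{boundary/tail terms}),
\end{equation*}
and on each piece the function and the relevant averages are "adapted to $I$" in the sense that their normalized norms over $I$ are controlled by the stopping values $\langle f\rangle_{2I,p}$, $\langle g\rangle_{I,q}$; applying the interpolated improving inequality at the scale of $I$ bounds each summand by $|I|\,\langle f\rangle_{2I,p}\langle g\rangle_{I,q}$, which is precisely the sparse form $\Lambda_{\mathcal{I},p,q}(f,g)$. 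Summing over the initial grid of $I_0$'s and taking the supremum over the resulting sparse collections finishes the argument.

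The main obstacle — and the reason the argument is not purely formal — is handling the maximal truncation rather than a single average: the improving inequality is for a fixed $A_N$, but $Af$ supremizes over all $N$, so within a stopping interval $I$ one must control $\sup_N |A_N(f\chi_{3I})|$ uniformly over the scales $N$ with $N^2$ comparable to or smaller than $|I|$, against the single localized norm $\langle f\rangle_{2I,p}$. The standard fix is a further linearization: freeze the scale $N = N(x)$ pointwise, split into the dyadic ranges $N^2 \in [|J|, 2|J|)$ for dyadic $J \subseteq I$, and for each such range note that $A_N(f\chi_{3I})(x)$ for $x$ in a length-$|J|$ subinterval $J$ only depends on $f\chi_{3J}$, so the single-scale improving bound applies on $J$; then sum the contributions over the dyadic subintervals $J$ of $I$ at a fixed generation, which costs only a geometric series because the stopping construction forces $\langle f\rangle_{2J,p} \lesssim \langle f\rangle_{2I,p}$ as long as $J$ has not yet stopped. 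A secondary technical point is the use of $3I$ versus $2I$ in the definitions — one must be a little careful that the "new mass" $f\chi_{3I}$ entering at generation $I$ is genuinely localized and that the doubled/tripled intervals in the sparse form absorb the overlaps; this is routine but should be stated precisely. Everything else (sparseness of $\mathcal{I}$, the passage from $F,G$ indicator sets to general $f,g$ being unnecessary since the theorem is stated for indicators, interpolation to fill the open triangle) is standard.
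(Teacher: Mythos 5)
Your overall architecture (stopping intervals, recursion, sparseness from Chebyshev at each generation) matches the paper's reduction of \thmref{thm:sparse} to a single-generation estimate. The gap is in the step you flag as "the main obstacle" and then dismiss too quickly: you cannot derive the bound for the linearized maximal operator on a stopping interval $I$ by applying the fixed-scale improving inequality (\thmref{thm:improving}) separately on each dyadic range $N^2 \sim |J|$ and summing. There is no geometric series there. At each generation $k$ the subintervals $J$ with $|J| = 2^{-k}|I|$ tile $I$, and the single-scale bound contributes $\sum_J |J|\,\langle f\rangle_{2J,p}\langle g\rangle_{J,q} \lesssim |I|\,\langle f\rangle_{2I,p}\langle g\rangle_{I,q}$ --- the \emph{same} amount at every one of the $\sim \log |I|$ generations. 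Even exploiting that the sets $\{x : \tau(x)^2 \sim |J|\}$ are disjoint in $x$, H\"older over the generations still leaves a factor $(\log|I|)^{1/q'}$, which is unbounded and destroys the scale-free sparse form. This is exactly why the paper does not deduce the sparse bound from \thmref{thm:improving}: it proves a genuinely maximal estimate, Lemma~\ref{lem:sparse_key}, by redoing the entire High/Low decomposition for $A_\tau$ (Section~\ref{s:sparse}). The High-pass pieces are summed in $\ell^2$ over all scales at once via square functions and Bourgain's multi-frequency maximal theorem (Theorem~\ref{thm:BenT33}), which is where the summability over scales actually comes from, and the Low-pass piece is controlled pointwise using the admissibility of $\tau$ (Lemma~\ref{lem:tilde_a_tau_Low}). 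None of this is a formal consequence of the fixed-$N$ result.

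Two secondary corrections. First, the vertex $Z_2=(1,0)$ is not a "trivial $\ell^1$ bound": the edge of $\mathbf{Z}$ through the points $(1/p,1/p')$ near $(1,0)$ encodes Bourgain's $\ell^p$ maximal ergodic theorem for $p$ close to $1$, which is deep and is invoked as a black box; only $Z_1=(0,1)$ is the trivial $\ell^\infty$ endpoint. Second, your stopping condition is placed on the $\ell^p$ and $\ell^q$ averages of both $f$ and $g$, whereas the recursion only needs (and the admissibility of $\tau$ is defined through) the $\ell^1$ averages of $f$; this is harmless but the $f$-based $\ell^1$ stopping is what makes the Low-pass estimate \eqref{e:tildea<} work.
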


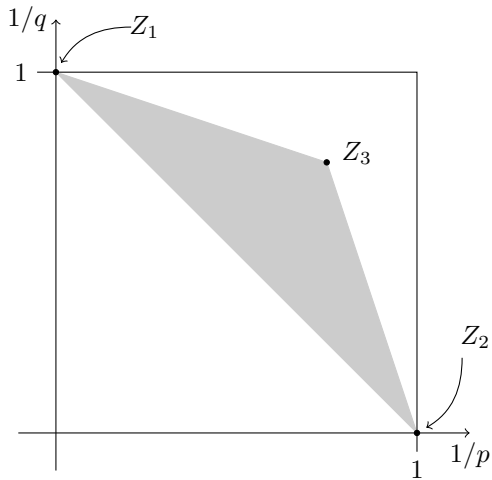
\begin{figure}
\begin{tikzpicture}
\draw [->]  (-.5, 0) -- (5.5,0) node[below] {$ 1/p$}; 
\draw [->]  (0,-.5) -- (0,5.5) node[left] {$ 1/q$}; 

\filldraw[black!20] (0, 4.8) -- (3.6, 3.6) -- (4.8,0) -- (0, 4.8);  

\filldraw (0, 4.8) circle (0.1em) node (Z1) {};
\filldraw (4.8, 0) circle (0.1em) node (Z2) {};
\filldraw (3.6,3.6) circle (0.1em) node (Z3) {};  

\draw (4, 4) node [below] {$Z_3$};

\draw (4.8,-.25 )  node [below] {$ 1$}  -- (4.8,4.8) -- (-.25,4.8) node [left] {$ 1$} ; 

\draw[->]    (1, 5.4) node {\ \ \ $Z_1$}   to [out = 180 , in = 60] (Z1) ; 
\draw[->]    (5.4, 1) node[above] {\ \ \ $Z_2$}   to [out = -90 , in = 30] (Z2) ; 
\end{tikzpicture}
\caption{Sparse bounds for the maximal function along the square integers 
hold for points $ (1/p, 1/q)$ in the interior of the triangle above. The points are $ Z_1 = (0,1) $, $ Z_2 = (2/3,2/3)$ and $ Z_3 = (1,0)$. } 
\label{f:Sparse_triangle}
\end{figure}

The interest in the sparse bound is that it immediately implies weighted and vector valued inequalities, which we return to in \S \ref {s:comp}. 
This is well documented in the literature. A sparse bound is the only known way to prove these types of estimates in the discrete setting.  

\bigskip 
Discrete Harmonic Analysis originates from the foundational work of Bourgain \cites{MR937581,MR937582, MR916338,MR1019960} on arithmetic ergodic theorems.  The essential element of these theorems are the maximal function inequalities for averages formed over polynomial sub-varieties 
of $ \mathbb Z ^{d}$.  This theory has been extended by several authors  \cites{MR1771530,MR2188130,MSW}.  Chief among these were 
 E.~M.~Stein and S.~Wainger. 
For a very recent, and deep, manifestation of this theory, we point to the recent papers \cites{MR3681393,2015arXiv151207518M,2018arXiv180309431K}.  
These references address many types of operators, including fractional integral operators \cites{MR2872554,MR1945293}. 
The latter operators are $ \ell ^{p} $ to $ \ell ^{q}$, but global and nature. The underlying difficulties behind these estimates are distinct from 
those of  scale free estimates. 

The scale free estimates were first studied for the discrete sphere by Hughes \cite{180409260H} and Kesler and Lacey \cite{180409845}. The analysis in this question hinges upon non-trivial bounds 
for Kloosterman sums.  The case of the spherical maximal function was addressed by Kesler \cites{180509925,180906468}. 
These papers reveal a remarkable parallel theory with the continuous case \cites{MR1388870,MR1432805,MR1949873}. 
In particular, the deepest aspects of these estimates depend upon Ramanujan sums.  Kesler's results were simplified and extended in 
\cite{181002240}. Discrete lacunary spherical bounds were proved in \cite{2018arXiv181012344K}. In sharp contrast to this paper, we do not know sharpness of any of the $ \ell ^{p}$ improving estimates in the case of the 
discrete sphere.

\bigskip 
We turn to the method of proof.  
Following the work of Bourgain \cites{MR937581,MR937582, MR916338,MR1019960}, we use the Hardy and Littlewood Circle method to make a detailed study of the corresponding multipliers.  
There are treatments of the Bourgain ergodic theorem on the square integers in the literature, but the methods used that we could find would not 
prove the sharp result.  
There is however a  very efficient version of Circle method for the square integers. This is established in an 
elegant paper of  Fiedler, Jurkat, K\"{o}rner \cite{MR0563894}, 
see Theorem~\ref{thm1:FJK} below. 

Using this important tool, we adapt another proof technique  of Bourgain \cite{MR812567}. 
The Fourier  multipliers associated to our operators 
are divided into several parts, each of which is either a `High Pass' or a `Low Pass' term. 
The High Pass terms are more elementary, in that one quantifies an $ \ell ^2 $-bound. 
The `Low Pass' terms are compared pointwise to the usual averages. This is the hard case. 
These  terms require a detailed analysis of certain exponential sums  related to the function 
\begin{equation*}
G (x,q) = \lvert  \{ \ell \in \mathbb Z/q \mathbb Z  \;:\;   \ell ^2 = x\}\rvert . 
\end{equation*}
See Lemma~\ref{lem:low_goal} for the precise function in question, as here we are taking small liberties for the sake of accessibility.  
It is always the case that $ G (x,q) \leq \sqrt q$.  However holding $ x$ fixed,  frequently in $q $, 
this function is only of the order of $ \log q$.  The  actual result is phrased in the language of logarithmic averages. 

The High Low  method is a common technique in the continuous setting \cite{MR1949873}.  Its appearance in the discrete setting 
is much more recent.  It was used (in the $ \ell^p $ to $ \ell^p$ setting) by Ionescu \cite{I}, and then Hughes \cite{MR3671577}. 
Its application to the setting of $ \ell ^{p}$ improving inequalities was initiated in \cites{181002240,180409845}.  Decompositions of the 
operators can involve several terms. For each, one only needs one estimate, High or Low.  

\bigskip 
The paper is organized as follows.  Well known results for Gauss sums are recalled in \S \ref{s:prelim}  followed by the 
two core initial estimates needed for the two main theorems above.  We then move to the proof of the uniform in scale estimate, 
namely Theorem~\ref{thm:improving}.  The core difficulty is the same in both Theorems, and is addressed in \S \ref{s:core}. 
We then turn to the sparse bound in \S \ref{s:sparse}.  Some complements, including open questions, are collected in \S \ref{s:comp}. 

\section{Preliminaries} \label{s:prelim}
\subsection{Notations}
Throughout the paper, let $e(x):=e^{2\pi i x}$.
Let 
\begin{align*}
\mathcal{F}_{\Z}(f)(\xi)=\sum_{x\in \Z} e(-\xi x) f(x),\ \ \xi\in \T=[0,1],
\end{align*}
be the Fourier transform on $\Z$,
and
\begin{align*}
\mathcal{F}_{\R}(f)(\xi)=\int_{\R} e(-\xi t) f(t)\, dt,\ \ \xi \in \R,
\end{align*}
be the Fourier transform on $\R$.
Define two normalized Gauss sums by 
\begin{align}  \label{e:G}
G(a,q)  & :=\frac{1}{q}\sum_{n=0}^{q-1} e(an^2/q).
\\ \label{e:G0}
G_0(a, q) & :=\frac{1}{2q} \sum_{n=0}^{2q-1} e(a n^2/2q)
\end{align} 
It is then clear that 
\begin{align}\label{eq:G0q=G2q}
G_0(a,q)=G(a,2q).
\end{align}
Define
\begin{align}\label{def:epsilon_m}
\varepsilon_m:=
\begin{cases}
1\qquad \text{if } m\equiv 1\, \mathrm{(mod} 4\mathrm{)}\\
i\, \qquad \text{if } m\equiv 3\, \mathrm{(mod} 4\mathrm{)}
\end{cases}
\end{align}
It is well-known that 
\begin{align}\label{eq:Gauss_G0}
G_0(a,q)=
\begin{cases}
0\qquad\qquad\qquad\qquad\qquad\quad\,\,\, \text{if } a\cdot q\, \text{ is odd}\\
q^{-1/2} \left(\frac{2a}{q}\right) e(\frac{(q-1)^2}{16})\,\,\,\,\,\,\,\qquad \text{if } 2|a\\
q^{-1/2} \left(\frac{q}{a}\right) e(\frac{a}{8})\qquad\qquad\qquad \text{if } 2|q
\end{cases}
\end{align}
where $\left(\frac{m}{n}\right)$ is the Jacobi symbol.
For $G(a,q)$, we have that for $(a,q)=1$, 
\begin{align}\label{eq:Gauss_G}
G(a,q)=
\begin{cases}
0\qquad\qquad\qquad\qquad\qquad\quad\,\, \text{if } q\equiv 2\, \mathrm{(mod} 2\mathrm{)}\\
\varepsilon_q q^{-1/2} \left(\frac{a}{q}\right)\qquad\qquad \, \, \qquad \text{if } q \text{ is odd}\\
(1+i)\varepsilon_a^{-1} q^{-1/2} \left(\frac{q}{a}\right)\, \, \,\,\,\,\, \qquad \text{if } a \text{ is odd and } 4|q
\end{cases}
\end{align}
When $(a,q)\neq 1$, we simply have
\begin{align}\label{eq:Gauss_G_not_coprime}
G(a,q)=G\left(\frac{a}{(a,q)}, \frac{q}{(a,q)}\right).
\end{align}
Clearly,
\begin{align}\label{eq:GaussG0_norm}
|G_0(a,q)|=
\begin{cases}
0\, \, \, \, \qquad\qquad \text{if } a\cdot q\, \text{ is odd}\\
q^{-1/2}\, \, \, \, \, \qquad  \text{otherwise}
\end{cases}
\end{align}

\subsection{The Core Estimates}  \label{s:core}
We state the core estimates to both of our main theorems.  
For $f, g\in \ell^2(\Z)$, we denote the standard inner product on $\ell^2(\Z)$ by $(f,g)$, namely
\begin{align}\label{def:inner_prod}
(f, g)=\sum_{x\in \Z} \overline{f(x)} g(x).
\end{align}

Since our goal is to prove Theorem \ref{thm:improving} for $p>3/2$, hence in an open range. It is sufficient to prove the following restricted weak type estimate.
\begin{thm}\label{thm:improving_weak}
For any $p>3/2$, for any interval $I$ with length $N^2$, we have
\begin{align*}
(A_N f, g) \lesssim_p \langle f\rangle_{2I,p} \langle g\rangle_{I,p}\, \lvert  I\rvert  
\end{align*}
holds for any indicator functions $f=\chi_F$ supported on $2I$ and $g=\chi_G$ supported on $I$.
\end{thm}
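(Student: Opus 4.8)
Fix $p>3/2$. The plan is to analyze the multiplier $m_N(\xi)=\frac1N\sum_{k=1}^N e(k^2\xi)$ of $A_N$ by the circle method in the sharp form of Fiedler--Jurkat--K\"{o}rner, \thmref{thm1:FJK}: on a family of major arcs around the rationals $a/q$ the multiplier $m_N$ is approximated by a sum $\sum_{q}\sum_{(a,q)=1}G(a,q)\,\Psi_N(\xi-a/q)$ of normalized Gauss sums against a smooth bump $\Psi_N$ of width $\sim N^{-2}$, with a controlled error term. I would split the whole of $A_N$ into finitely many pieces, $A_N=\sum_\nu T_\nu$, organized --- following the High/Low method --- dyadically in the size of $q$ and in the distance $|\xi-a/q|$ relative to $N^{-2}$, so that each $T_\nu$ is handled by exactly one of two kinds of estimate. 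A \emph{High Pass} piece gets an $\ell^2$ bound $\|T_\nu\|_{\ell^2\to\ell^2}\lesssim 2^{-\delta\nu}$, supplemented as needed by the trivial bound $(T_\nu\chi_F,\chi_G)\le\|K_\nu\|_{\ell^\infty}\,|F|\,|G|$ on the convolution kernel $K_\nu$. A \emph{Low Pass} piece gets a pointwise comparison $|T_\nu f|\lesssim \mathcal{W}_\nu\cdot Mf$, where $M$ is a Hardy--Littlewood average at scale $\lesssim N^2$ and $\mathcal{W}_\nu$ is a genuinely arithmetic weight read off from the Gauss sums inside $T_\nu$.

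For the High Pass pieces the passage to \thmref{thm:improving_weak} is soft, and this is where it pays to work with indicators: $\langle\chi_F\rangle_{2I,p}=(|F|/|2I|)^{1/p}$ is a clean power of a density in $[0,1]$, so the $\ell^2$ bound and the crude kernel bound interpolate transparently against the target $\langle\chi_F\rangle_{2I,p}\langle\chi_G\rangle_{I,p}|I|$. Taking the better of the two for each $\nu$ and summing the resulting geometric series disposes of all High Pass terms, uniformly over $p>3/2$; these are not the bottleneck.

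The content of the theorem, and the threshold $p=3/2$, sits in the Low Pass terms. Here one uses the elementary fact that the Hardy--Littlewood average $M$ at scale $\sim N^2$ already satisfies the scale-free improving bound $(M\chi_F,\chi_G)\lesssim\langle\chi_F\rangle_{2I,p}\langle\chi_G\rangle_{I,p}|I|$ for \emph{every} $p\ge 1$, so that the entire Low Pass contribution is controlled once one bounds the total mass of the arithmetic weights $\sum_\nu\mathcal{W}_\nu$ as it is paired against $\chi_F$ and $\chi_G$. Unwinding the Gauss sums inside the $\mathcal{W}_\nu$ expresses that total mass through a logarithmic average over $q$ of the square-root counting function $G(x,q)=|\{\ell\in\Z_q:\ell^2\equiv x\}|$ of the introduction, and what one needs is a bound on this logarithmic average that is \emph{uniform in $x$}. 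That is precisely \lemref{lem:low_goal}, the core estimate of the paper; feeding it in and balancing its logarithmic loss against the trivial $\ell^1$ and $\ell^\infty$ mapping properties of the averages yields exactly the range $p>3/2$. That $3/2$ is sharp --- so that no estimate of this type can do better --- is already visible from the choice $F=\{k^2:1\le k\le N\}$ and $g=\chi_{\{0\}}$, for which the left side equals $1$ while the right side is $\asymp N^{2-3/p}$.

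The step I expect to be the real obstacle is that last input: control of the logarithmic average of $G(x,q)$, uniform in $x$. The pointwise bound $G(x,q)\le\sqrt q$ is far too weak for this --- it produces a polynomial rather than a polylogarithmic weight --- so one must instead use that $q\mapsto G(x,q)$ is multiplicative and, for fixed $x$, typically of size $(\log q)^{O(1)}$, with its rare large values (governed by the square part of $x$) still tame on average, and crucially that this tameness can be made uniform over all $x$. Extracting from this a bound strong enough to land exactly at $p=3/2$ is the crux of the argument; by comparison the circle-method bookkeeping and the High/Low split above are comparatively routine.
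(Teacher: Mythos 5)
Your architecture is the paper's: the Fiedler--Jurkat--K\"orner expansion of the multiplier, a High/Low splitting, and the uniform-in-$x$ logarithmic average of the square-root counting function (Lemma~\ref{lem:low_goal}) as the arithmetic crux. But the accounting by which you extract the exponent $3/2$ does not close as written. You describe the Low Pass contribution as an arithmetic weight of polylogarithmic total mass times a Hardy--Littlewood average at scale $N^2$, and you assert that the High Pass pieces are disposed of uniformly over $p>3/2$ and are ``not the bottleneck.'' If both claims were literally true, pairing the Low Pass part with $\chi_G$ would give $(\log N)^{O(1)}\langle \chi_F\rangle_{2I,1}\langle\chi_G\rangle_{I,1}\,|I|$, which for indicators is at most the target for \emph{every} $p\ge 1$; combined with your High Pass claim this would prove the theorem for all $p$ arbitrarily close to $1$, contradicting the sharpness example you yourself record. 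A purely logarithmic loss cannot produce the number $3/2$.

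What is missing is the free cut parameter and the polynomial gain/loss attached to it. In the paper (Lemma~\ref{lem:High_Low_dec}) one fixes $J$ and cuts the major arcs at $q\le J$, shrinking the bump widths to $\sim J/(qN^2)$: the High Pass remainder obeys $\langle H_{N,J}\rangle_{I,2}\lesssim J^{-1/2}\log J\,\langle f\rangle_{2I,2}$, while the Low Pass piece obeys $\langle L_{N,J}\rangle_{I,\infty}\lesssim J(\log J)^2\,\langle f\rangle_{2I,1}$ --- note the factor $J$, not a logarithm: the Low Pass kernel has sup norm $\sim J/(qN^2)$, so even after Lemma~\ref{lem:low_goal} one pays a full power of $J$ against the Hardy--Littlewood average. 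The exponent $3/2$ then comes from choosing $J$ in terms of the densities $|F|/|I|$ and $|G|/|I|$: equating $J^{-1/2}\langle \chi_F\rangle_{2I,2}\langle\chi_G\rangle_{I,2}$ with $J\langle \chi_F\rangle_{2I,1}\langle\chi_G\rangle_{I,1}$ and using $\langle\chi_F\rangle_{2I,2}=\langle\chi_F\rangle_{2I,1}^{1/2}$ gives the common value $\langle\chi_F\rangle_{2I,1}^{2/3}\langle\chi_G\rangle_{I,1}^{2/3}=\langle\chi_F\rangle_{2I,3/2}\langle\chi_G\rangle_{I,3/2}$. So the threshold lives neither in the High nor in the Low half alone but in the trade-off between the $J^{-1/2}$ gain and the $J^{+1}$ loss; your sketch needs to make the cut parameter, the $J^{+1}$ loss in the Low Pass estimate, and the optimization over $J$ explicit before the argument is complete.
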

The core estimate of Theorem \ref{thm:improving_weak} is the following, where we decompose $A_N f$ into a High Pass and a  Low pass term. 
The High Pass term satisfies a very good $ \ell ^2 $ estimate, while the Low Pass term is compared to the usual averages, with a loss.  

\begin{lemma}\label{lem:High_Low_dec}
For any integer $J\in \{2^k: k\in \N\}$, we can decompose 
\begin{align*}
A_N f=H_{N,J}+L_{N,J},
\end{align*}
such that 
\begin{align} \label{e:loglog}
\begin{cases}
\la H_{N,J}\ra_{I,2}\lesssim J^{-1/2} \log J\, \la f\ra_{2I,2}\\
\la L_{N,J}\ra_{I,\infty}\lesssim J (\log J)^2\, \la f\ra_{2I,1}
\end{cases}
\end{align}
\end{lemma}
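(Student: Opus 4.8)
The plan is to run the Hardy--Littlewood circle method on the multiplier of $A_N$, split the major-arc contribution into a "main" piece that we compare to the ordinary averages and an "error" piece, and then use a Littlewood--Paley-type frequency cutoff at scale $J$ to produce the High/Low decomposition. Concretely, write the multiplier as $\widehat{A_N}(\xi)=\frac1N\sum_{k=1}^N e(k^2\xi)$. By Theorem~\ref{thm1:FJK} (the Fiedler--Jurkat--K\"orner form of the circle method for squares), $\widehat{A_N}(\xi)$ is, up to an acceptable $\ell^2$ error, a sum over denominators $q$ (with $1\le q\le N$, say) of terms of the shape $G_0(a,q)\,\Phi_N(\xi-a/q)$ where $a/q$ ranges over Farey fractions, $G_0$ is the normalized Gauss sum of \eqref{e:G0}, and $\Phi_N(\eta)=\mathcal F_{\R}(\text{arc of the parabola})(\eta)$ is a smooth bump of height $1$ and width $\sim N^{-2}$, i.e. $\Phi_N(\eta)=\psi(N^2\eta)$ for a fixed Schwartz $\psi$. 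Using $|G_0(a,q)|\le q^{-1/2}$ from \eqref{eq:GaussG0_norm}, the tail $q>J$ already contributes an $\ell^2$ multiplier bound $\lesssim J^{-1/2}$ (summing $q^{-1/2}\cdot q^{-1/2}\cdot(\text{number of }a)$ against the disjointness of the bumps), but to get the stated $J^{-1/2}\log J$ one keeps the dyadic pieces $q\sim 2^j$ for $2^j>J$ and sums the square-function estimates; the number of dyadic scales up to $N$ is $\log N$, so one must be slightly careful, but Plancherel plus the near-orthogonality of the arcs gives $\la H_{N,J}\ra_{I,2}\lesssim (\sum_{2^j>J} 2^{-j})^{1/2}\log$-correction $\lesssim J^{-1/2}\log J\,\la f\ra_{2I,2}$. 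This is the easy High Pass half.

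For the Low Pass term $L_{N,J}$, which collects the arcs with $q\le J$, the key is to compare each arc's convolution kernel pointwise to the kernel of an ordinary (continuous-scale) average. The point is that $\Phi_N(\xi-a/q)$ on the Fourier side corresponds, after undoing the modulation by $e(a x/q)$, to a kernel $K_{q,N}(x)$ that is essentially $q^{-1}$ times an $L^1$-normalized bump of width $\sim N^2$ — that is, convolution with $K_{q,N}$ is dominated pointwise by $c_q\, M$, a constant multiple of averaging over an interval of length $\sim N^2$, with $\sum_{q\le J}|G_0(a,q)|\,c_q\lesssim \sum_{q\le J} q^{-1/2}\cdot(\#a)\cdot q^{-1}\sim \sum_{q\le J} q^{-1/2}\lesssim J^{1/2}$; chasing the extra Gauss-sum and Farey-count factors more carefully, together with the logarithmic losses that arise from the arithmetic function $G(x,q)=|\{\ell\in\Z_q:\ell^2=x\}|$ (which is what the dual sum over $a$ assembles), yields the claimed $J(\log J)^2$. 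Here I would invoke Lemma~\ref{lem:low_goal} — the precise statement about logarithmic averages of $G(q,x)$, uniform in $x$ — as a black box at the stage where one needs $\sum_{q\le J} q^{-1} G(x,q)\lesssim (\log J)^2$ uniformly in $x$; this is exactly the "critical step" advertised in the abstract. Since ordinary averages over an interval of length $\sim N^2$ are trivially bounded from $\ell^1$ (normalized on $2I$) to $\ell^\infty$ (normalized on $I$) with constant $\sim 1$, summing the pointwise dominations gives $\la L_{N,J}\ra_{I,\infty}\lesssim J(\log J)^2\,\la f\ra_{2I,1}$.

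The main obstacle is the Low Pass estimate: getting the pointwise comparison to ordinary averages with the correct power of $J$ requires that, for each fixed denominator $q$, the sum over numerators $a$ (coprime or not, via \eqref{eq:Gauss_G_not_coprime}) of the modulated bumps reassembles into a genuinely positive, $L^1$-controlled averaging kernel rather than something merely $\ell^2$-small — this is where the arithmetic of Gauss sums enters, since $\sum_a G_0(a,q) e(ax/q)$ is (a normalization of) $G(x,q)$, and one must feed in the uniform-in-$x$ logarithmic bound of Lemma~\ref{lem:low_goal}. A secondary nuisance is bookkeeping the two independent logarithmic losses (one from the number of dyadic scales $q\sim 2^j\le J$, one from the $\log q$ typical size of $G(x,q)$), which together account for the $(\log J)^2$; keeping these from compounding into a larger power is the delicate part of the estimate. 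The truncation $q\le J$ versus $q>J$ at exactly the Littlewood--Paley scale is chosen precisely so that both halves of \eqref{e:loglog} hold simultaneously, and the interpolation against the trivial $\ell^\infty\to\ell^\infty$ bound for $A_N$ is deferred to the proof of Theorem~\ref{thm:improving_weak} itself.
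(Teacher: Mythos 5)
Your architecture matches the paper's: the Fiedler--Jurkat--K\"orner form of the circle method, denominators $q>J$ (together with the approximation error) routed to the High Pass term via the Gauss--sum decay $|G_0(a,q)|\le q^{-1/2}$, and denominators $q\le J$ routed to the Low Pass term via a pointwise kernel comparison whose arithmetic input is exactly the logarithmic--average bound $\bigl\|\sum_{q\le J}q^{-1}|H(q,\cdot)|\bigr\|_{\ell^\infty}\lesssim(\log J)^2$ of Lemma~\ref{lem:low_goal}. The High Pass half is fine in spirit (the paper obtains the multiplier bound $\lesssim J^{-1/2}\log J$ for the complement of the $q\le J$ arcs directly from Dirichlet approximation and Theorem~\ref{thm1:FJK}, rather than a square function over dyadic $q$-scales, but either route closes), and you correctly identify the key black box. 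A minor omission: for $J\gtrsim N$ the decomposition degenerates and one simply sets $L_{N,J}=A_Nf$, using $\la A_Nf\ra_{I,\infty}\le N\la f\ra_{2I,1}\le J\la f\ra_{2I,1}$.

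The genuine gap is in the Low Pass kernel bookkeeping, specifically in the choice of the arc cutoff width. You take a $q$-independent cutoff of width $\sim N^{-2}$ and assert the resulting kernel is ``$q^{-1}$ times an $L^1$-normalized bump of width $N^2$''; that factor $q^{-1}$ is exactly what is at stake and is not justified. With the natural arc widths (the $\eta_{2^{2s}}$ of \eqref{def:aN_gamma_eta}, or the Dirichlet width $1/(Nq)$) the spatial kernel attached to denominator $q$ has sup norm of order $1/(qN)$, and the resulting Low Pass bound is $\sim N(\log J)^2\la f\ra_{2I,1}$ --- correct only when $J\sim N$, and far too large when $J\ll N$. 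If instead you shrink every cutoff to width $cN^{-2}$ as written, the Low Pass half does close (since $\sum_{q\le J}|H(q,x)|\le J\sum_{q\le J}q^{-1}|H(q,x)|\lesssim J(\log J)^2$), but the \emph{difference} between the natural arcs and the shrunken ones is now only $O(1)$ in multiplier norm: on its support $|2\xi-a/q|\gtrsim N^{-2}$, so \eqref{eq:gammaN} yields no decay in $\gamma_N$ at all, and this transition piece can be placed in neither the High nor the Low pass term. The paper's resolution is the intermediate, $q$- and $J$-dependent width $J/(qN^2)$: the multiplier $b_{N,1}$ built from $\eta_{qN^2/J}$ in \eqref{def:tildea_Ns} has kernel height $\lesssim J/(qN^2)$ (Lemma~\ref{lem:measure_conv}), which produces precisely the weight $1/q$ needed to invoke Lemma~\ref{lem:low_goal} together with the prefactor $J$; while on the support of $\eta_{2^{2s}}-\eta_{qN^2/J}$ one has $|\gamma_N|\lesssim(q/J)^{1/2}$, so the transition term $b_{N,2}$ is $O(J^{-1/2}\log J)$ in $L^\infty$ (Proposition~\ref{p:bN2}) and joins the High Pass term. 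This additional splitting of $a_N$ into $b_{N,1}+b_{N,2}$, with the cutoff scale tuned so that both halves of \eqref{e:loglog} hold simultaneously, is the idea missing from your proposal.
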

The proof of Lemma \ref{lem:High_Low_dec} is given in Section \ref{sec:lemma}.
We will now finish the proof of Theorem \ref{thm:improving_weak}.

\begin{proof}
Take $\varepsilon>0$ such that $p=3/(2-\varepsilon)$.
Lemma \ref{lem:High_Low_dec} clearly implies
\begin{align}
\la H_{N,J}\ra_{I,2}\lesssim_{\varepsilon} J^{-1/2+\varepsilon} \la f\ra_{2I,2}\, \text{ and }\, \la L_{N,J}\ra_{I,\infty}\lesssim_{\varepsilon} J^{1+\varepsilon}\, \la f\ra_{2I,1}.
\end{align}
We estimate
\begin{align}
|I|^{-1} (A_N f, g)
&\leq |I|^{-1} (H_{N,J}, g)+|I|^{-1} (L_{N,J}, g)\\
\label{e:loglog2}&\leq \la H_{N,J}\ra_{I,2}\, \la g\ra_{I,2}+\la L_{N,J}\ra_{I, \infty}\, \la g\ra_{I,1}\\
&\lesssim_{\varepsilon} J^{-1/2+\varepsilon} \la f\ra_{2I,2}\, \la g\ra_{I,2}+J^{1+\varepsilon} \la f\ra_{2I,1}\, \la g\ra_{I,1}.
\end{align}
Optimizing over $J$, clearly $J\sim \la f\ra_{2I,2}^{-2/3}\, \la g\ra_{I,2}^{-2/3}$. We have
\begin{align*}
|I|^{-1} (A_N f, g)\lesssim_{\varepsilon} \la f\ra_{2I,p}\, \la g\ra_{I,p},
\end{align*}
this proves Theorem \ref{thm:improving_weak}.  

\end{proof}

Turn to Theorem~\ref{thm:sparse}.  It suffices to prove the sparse bound restricting the supremum over $ N$ in \eqref{def:max_A} to powers of $ 2$.  
A sparse bound is typically proved by a recursive argument. 
To do this, we fix a large dyadic interval $E$, function $f=\chi_F$ supported on $2E$, and $g=\chi_G$ supported on $E$.
Let $C>0$ be a large absolute constant.
Consider a choice of stopping time $\tau: E\to \{1,...,\lfloor \sqrt{|E|}\rfloor \}\cap \{2^k, k\in \N\}$, 
so that the average $ A _{\tau (x)} f (x)$ is approximately maximal. 
We call $ \tau $ an {\it admissible stopping time} if for any subinterval $I\subset E$ with $\la f\ra_{3I,1}>C \la f\ra_{2E,1}$, 
we have $\min_{x\in I}\tau^2 (x)>|I|$.
The key recursive argument is the following:
\begin{lemma}\label{lem:sparse_key}
Let $(1/p, 1/q)$ be in the interior of $\mbf{Z}$.
Let $E, f, g$ be defined as above. 
For any admissible stopping time $\tau$, we have
\begin{align*}
(A_{\tau} f, g)\lesssim \la f\ra_{2E,p}\, \la g\ra_{E,q} \, |E|.
\end{align*}
\end{lemma}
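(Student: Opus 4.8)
The plan is to prove Lemma~\ref{lem:sparse_key} by exactly the same High/Low decomposition used for Theorem~\ref{thm:improving_weak}, but now carried out \emph{locally on the stopping intervals} determined by $\tau$. First I would recall that it suffices to bound $(A_\tau f,g)$ with $f=\chi_F$, $g=\chi_G$; linearizing the supremum via $\tau$ lets us write
\begin{align*}
(A_\tau f,g)=\sum_{x\in E} A_{\tau(x)}f(x)\,g(x).
\end{align*}
Since $\tau$ takes only $O(\log|E|)$ dyadic values, I would split $E$ into the level sets $E_k=\{x\in E:\tau(x)=2^k\}$ and, on each, partition into maximal dyadic intervals $I$ of length $\sim 2^{2k}$; on such an $I$ the operator $A_{\tau(x)}$ acts like the fixed-scale average $A_N$ with $N=2^k$ and $|I|=N^2$, so Lemma~\ref{lem:High_Low_dec} applies with a parameter $J$ to be chosen per interval.

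The core of the argument is then the optimization. On a fixed interval $I$ (with $N^2=|I|$) write $A_N f=H_{N,J}+L_{N,J}$. For the High term use Cauchy--Schwarz and the $\ell^2$ bound $\la H_{N,J}\ra_{I,2}\lesssim J^{-1/2}\log J\,\la f\ra_{2I,2}$; for the Low term use the pointwise bound $\la L_{N,J}\ra_{I,\infty}\lesssim J(\log J)^2\,\la f\ra_{2I,1}$ paired with $\la g\ra_{I,1}$. Since $f,g$ are indicators, $\la f\ra_{2I,2}=\la f\ra_{2I,1}^{1/2}$ and $\la g\ra_{I,2}=\la g\ra_{I,1}^{1/2}$, so balancing the two contributions in $J$ gives, up to $\log$ losses absorbed into an $\varepsilon$, a bound of the shape $\la f\ra_{2I,1}^{\theta}\la g\ra_{I,1}^{\eta}$ with exponents corresponding to a point on the segment from $Z_2=(2/3,2/3)$ toward the $\ell^1$ corner; interpolating this against the trivial $\ell^1\times\ell^\infty$ and $\ell^\infty\times\ell^1$ bounds (valid because $A_N$ is an average) yields the full open triangle $\mbf{Z}$. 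Summing over the intervals $I$ and over the $O(\log|E|)$ values of $k$, the admissibility of $\tau$ is what controls the global averages: for any $I$ in the decomposition, $\min_{x\in I}\tau^2(x)>|I|$ forces $\la f\ra_{3I,1}\le C\la f\ra_{2E,1}$, so each local factor $\la f\ra_{2I,p}$ is dominated by the ambient $\la f\ra_{2E,p}$, and $\sum_I|I|\la g\ra_{I,q}\lesssim|E|\la g\ra_{E,q}$ by disjointness and H\"older.

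I would present the interpolation as follows: prove the weak-type/restricted bound at a single interior point $(1/p_0,1/q_0)$ near $Z_2$ from the $J$-optimization, note the two trivial endpoints at $Z_1=(0,1)$ and $Z_3=(1,0)$ (here using the labeling of the statement), and then the interior of the triangle is their convex hull after a standard restricted-weak-type interpolation for bilinear forms on indicators. The stopping-time bookkeeping — reassembling $\sum_k\sum_{I}|I|\la f\ra_{2I,p}\la g\ra_{I,q}\lesssim|E|\la f\ra_{2E,p}\la g\ra_{E,q}$ — is routine once one observes that the intervals at a fixed scale $2^k$ are disjoint and the scales are only logarithmically many, with the $\log$'s again swept into the $\varepsilon$-room afforded by working strictly inside $\mbf{Z}$.

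The main obstacle I expect is not any single estimate but making the linearization and the per-interval scale-matching precise: one must verify that restricting $A_{\tau(x)}$ to a stopping interval $I$ where $\tau\equiv 2^k$ genuinely reduces to the fixed-scale Lemma~\ref{lem:High_Low_dec} (in particular that $f$ may be taken supported on $2I$ after harmlessly truncating, since $A_N$ has kernel of length $N^2\sim|I|$), and that the admissibility condition is invoked at the correct dilate ($3I$ versus $2I$ versus $2E$) so that the local-to-global comparison of averages actually closes. Once that scaffolding is in place, the analytic heart is the same $J$-optimization already carried out above for Theorem~\ref{thm:improving_weak}, now run at a general interior point of $\mbf{Z}$ rather than only on the diagonal.
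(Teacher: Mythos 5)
Your reduction to the fixed-scale Lemma~\ref{lem:High_Low_dec} on the level sets of $\tau$ has a genuine gap at the point you describe as routine bookkeeping: the sum over the $O(\log|E|)$ dyadic values of $\tau$. Treating each scale $2^k$ separately and adding up the resulting bounds $|E|\langle f\rangle_{2E,p}\langle g\chi_{E_k}\rangle_{E,q}$ costs a factor of $\log|E|$ (or $(\log|E|)^{1/2}$ if you are careful with $\ell^2$-orthogonality of the $g\chi_{E_k}$), and this loss is \emph{not} absorbable into the $\varepsilon$-room of the interior of $\mathbf{Z}$: that room only buys negative powers of the densities $\langle f\rangle_{2E,1}$ and $\langle g\rangle_{E,1}$, whereas $\log|E|$ is unbounded in terms of these densities (take $F$ and $G$ of density $(\log|E|)^{-1}$ in $E$ with $|E|$ huge; neither the lossy bound nor the trivial $\ell^1\times\ell^\infty$/$\ell^\infty\times\ell^1$ bounds, nor their geometric means, recover the target for points with $1/p+1/q$ close to $4/3$). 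The fixed-scale lemma, applied with the cutoff $M=J$, gives a High-pass bound $J^{-1/2}\log J$ that is \emph{uniform} in the scale $N$, with no decay in $N$, so there is no mechanism in your argument to sum the scales without loss.

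The paper's proof is organized precisely to avoid this. It decomposes the linearized operator $A_\tau$ globally on $E$ rather than scale by scale: the circle-method cutoff is taken at $M=\tau/4$ (not $M=J$), so that the minor-arc term obeys $\|c_N\|_\infty\lesssim N^{-1/2}\log N$ with genuine decay in $N$, and the High-pass pieces are then summed across all dyadic scales $N>J$ by a square-function argument with geometrically convergent tails (Lemmas~\ref{lem:ctau} and \ref{lem:btau1}). The remaining major arcs with denominators $2^s$ between $J$ and $\tau/4$ — the piece $b_{\tau,2}$ in \eqref{e:b2} — cannot be handled by either a pointwise bound or a crude square function; they require Bourgain's multi-frequency maximal inequality (Theorem~\ref{thm:BenT33}), whose factor $s2^{-s/2}$ is what makes the sum over $s>\log_2 J$ converge to $J^{-1/2}\log J$. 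The Low-pass piece $\tilde a_\tau$ is then bounded pointwise at each $x$ using the kernel decay of Lemma~\ref{lem:sparse_measure_con} together with admissibility at every dyadic dilate $|I_k|=2^k\tau(x)^2$, not just at the single scale $\tau(x)^2$. None of these ingredients — the scale-dependent cutoff, the square functions over scales, and above all the multi-frequency maximal theorem — appear in your proposal, and some substitute for them is unavoidable. A secondary point: the vertex $(1,0)$ of $\mathbf{Z}$ is not a trivial endpoint; on the duality line it corresponds to $\ell^p$-boundedness of the maximal function for $p$ near $1$, which is Bourgain's theorem. Only $(0,1)$ is trivial.
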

Let us postpone the proof of this lemma, and finish the proof of Theorem \ref{thm:sparse} first. 

\begin{proof}[Proof of Theorem~\ref{thm:sparse}] 
We can assume there is a fixed dyadic interval $E$ such that $f=\chi_F$ is supported on $2E$ and $g=\chi_G$ is supported on $E$.
Let $\mathcal{I}_E$ be the maximal dyadic sub-intervals $I$ of $E$ for which $\la f\ra_{3I,1}>C\la f\ra_{2E,1}$. 
Then we have that for an appropriate choice of admissible $\tau$, 
\begin{align}\label{eq:max<tau+sparse}
(\sup_{N^2\leq |E|} A_N f, g)\leq (A_{\tau} f, g)+\sum_{I\in \mathcal{I}_E} (\sup_{N^2\leq |I|} A_N(f\chi_{2I}), g\chi_I)
\end{align}
By Lemma \ref{lem:sparse_key}, we can control the first term in \eqref{eq:max<tau+sparse},
\begin{align*}
(A_{\tau} f, g)\lesssim |E| \la f\ra_{2E,p}\, \la g\ra_{E,q}.
\end{align*}
For appropriate $C$, we have
\begin{align*}
\sum_{I\in \mathcal{I}_E}|I|\leq \tfrac{1}{4}|E|.
\end{align*}
We can recurse on the second term of \eqref{eq:max<tau+sparse} to construct our sparse bound.

\end{proof}

\section{Proof of Lemma \ref{lem:High_Low_dec}}\label{sec:lemma}

\subsection{The Initial Decomposition}
Our proof of Theorem \ref{thm:improving_weak} is built on a fine decomposition, using the Hardy-Littlewood Circle method, of the corresponding Fourier multiplier of $A_N$. 
Let
\begin{align}
K_N(x)=\frac{1}{N}\sum_{k=1}^N \delta_{-k^2} (x).
\end{align}
Thus $A_N f=f \ast  K_N$. The multiplier is a Weyl sum, given by 
$$\mathcal{F}_{\Z}K_N(\xi)=\frac{1}{N} \sum_{k=1}^N e(k^2 \xi).$$

Let $M=2^m \leq  N/4$, with $m\in \N$. 
This is the initial decomposition of the multiplier.  Write 
\begin{align}\label{eq:KN=aN+cN}
\mathcal{F}_{\Z}K_N(\xi)=a_N(\xi)+c_N(\xi),
\end{align}
where $a_N(\xi)$ is defined as follows:
\begin{align}\label{def:aN_gamma_eta}
\begin{cases}
a_N(\xi):=\sum_{s=1}^m a_{N,s}(\xi)\\
a_{N,s}(\xi):=\sum_{a/q\in \mathcal{R}_s} G_0(a,q) \eta_{2^{2s}}(2\xi-\frac{a}{q})\gamma_N(2\xi-\frac{a}{q})\\
\mathcal{R}_s:=\left\lbrace \text{reduced}\ a/q\in 2\T:\ 2^{s-1}\leq q<2^s\right\rbrace\\
\gamma_N(\xi):=\frac{1}{N} \int_0^N e(\xi t^2/2)\, dt\\
\eta_k(\xi):=\eta(k \xi),
\end{cases}
\end{align}
in which $\eta$ is a smooth bump function satisfying $\chi_{[-\frac{1}{4},\frac{1}{4}]}\leq \eta \leq \chi_{[-\frac{1}{2},\frac{1}{2}]}$.
We remark that the decomposition above depends upon $ J$, but we suppress the dependence in the notation.  
This decomposition, with  $M=J$ is needed for Lemma \ref{lem:High_Low_dec}, and with $M=N/4$ is needed for the maximal function sparse bounds.

The following estimate of $\gamma_N$ is known:
\begin{align}\label{eq:gammaN}
|\gamma_N(\xi)|\leq  \min \{ 1, \;  N^{-1} |\xi|^{-1/2} \}. 
\end{align}
We also note that 
\begin{align}\label{eq:gammaN_Fourier_inv}
\gamma_N(\xi)=\mathcal{F}_{\R}(h)(-N^2 \xi),
\end{align}
where $h(t)=\chi_{[0,1]}(t)\cdot \frac{1}{2\sqrt{t}}$.  This is the continuous version of the averages we are considering.  

Another useful fact is that for distinct $a_1/q_1, a_2/q_2\in \mathcal{R}_s$, we have
\begin{align}\label{eq:eta_supp_disjoint}
\mathrm{supp}\bigl(\eta_{2^{2s}}(\cdot -\tfrac{a_1}{q_1})\bigr) \cap \mathrm{supp}\bigl(\eta_{2^{2s}}(\cdot -\tfrac{a_2}{q_2})\bigr)=\emptyset.
\end{align}
The proof is trivial, just note that $|a_1/q_1-a_2/q_2|\geq 2^{-2s}$.


We will use the following results from Fiedler, Jurkat and  K\"{o}rner \cite{MR0563894}.
\begin{thm}\label{thm1:FJK} \cite{MR0563894}*{Thm. 1}  For all integers $ N$,  
\begin{align}\label{eq:FJK}
\mathcal{F}_{\Z}(K_N)(\xi)=\frac{g(a,q)}{N} \int_0^N e(r t^2/2q)\, dt+ \Omega,
\end{align}
in which
\begin{align}\label{eq:xi=a/q_FJK}
2\xi=\frac{a}{q}+\frac{r}{q},\ \ \ |r|\leq \frac{1}{4N},\ \ \ 0<q\leq 4N,\ \ \ (a,q)=1,
\end{align}
and
\begin{align*}
|\Omega|\leq C N^{-1}\sqrt{q},
\end{align*}
for some absolute constant $C>0$.
Here, see Theorem 5 of \cite{MR0563894},
\begin{align*}
g(a,q)=
\begin{cases}
0\qquad\qquad\qquad \text{if } a\cdot q \text{ is odd}\\
G_0(a,q) \qquad \,\,\,\,\,\, \text{otherwise}
\end{cases}
\end{align*}
\end{thm}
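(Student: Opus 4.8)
This is Theorem~1 of \cite{MR0563894} (combined with their Theorem~5, which identifies the coefficient $g(a,q)$), so in this paper it is simply quoted; were one to prove it directly, here is the plan. Write $W_N(\xi)=N\,\mathcal{F}_\Z K_N(\xi)=\sum_{k=1}^N e(k^2\xi)$ and fix the Farey-type approximation $2\xi=\frac aq+\frac rq$ with $(a,q)=1$, $|r|\le\frac1{4N}$, $0<q\le 4N$, so that $\xi=\frac{a}{2q}+\frac{r}{2q}$. The first step is to split the summation variable $k$ into residue classes modulo $2q$: since $(k+2q)^2\tfrac{a}{2q}-k^2\tfrac{a}{2q}=2ka+2qa\in\Z$, the map $k\mapsto e(ak^2/2q)$ is $2q$-periodic, and so
\begin{align*}
W_N(\xi)=\sum_{n=0}^{2q-1}e\!\left(\tfrac{an^2}{2q}\right)\ \sum_{1\le n+2qm\le N}e\!\left(\tfrac{r}{2q}(n+2qm)^2\right).
\end{align*}

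The second step is to replace each inner sum by an integral. The phase $\psi(t)=\tfrac{r}{2q}t^2$ satisfies $|\psi'(t)|=\tfrac{|r|}{q}|t|\le\tfrac{1}{4q}$ and $|\psi''(t)|=\tfrac{|r|}{q}\le\tfrac{1}{4Nq}$ on $[0,N]$, that is, the inner sum oscillates very slowly in $m$, so Euler--Maclaurin summation (equivalently, the trapezoidal-rule error estimate) will give
\begin{align*}
\sum_{1\le n+2qm\le N}e\!\left(\tfrac{r}{2q}(n+2qm)^2\right)=\frac{1}{2q}\int_0^N e\!\left(\tfrac{r}{2q}t^2\right)dt+\mathcal{E}_n,
\end{align*}
with $\mathcal{E}_n$ built from the two endpoint values of $e(\psi(\cdot))$ along the progression plus a remainder of size $O(N|r|/q)=O(1/q)$. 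Summing the main terms over $n$ and using $\sum_{n=0}^{2q-1}e(an^2/2q)=2q\,G(a,2q)=2q\,G_0(a,q)$ from \eqref{e:G0} and \eqref{eq:G0q=G2q}, then dividing by $N$, produces exactly $\tfrac{G_0(a,q)}{N}\int_0^N e(rt^2/2q)\,dt+\Omega$ with $\Omega=\tfrac1N\sum_{n=0}^{2q-1}e(an^2/2q)\,\mathcal{E}_n$; when $aq$ is odd both the Gauss sum and $G_0(a,q)$ vanish by \eqref{eq:GaussG0_norm}, matching the definition of $g(a,q)$, and otherwise $g(a,q)=G_0(a,q)$.

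The hard part will be the error bound $|\Omega|\le C N^{-1}\sqrt{q}$. Estimating $\mathcal{E}_n=O(1)$ termwise only yields $|\Omega|=O(q/N)=O(1)$, which is worthless; one must extract square-root cancellation from $\sum_n e(an^2/2q)\,\mathcal{E}_n$. The point is that the endpoint phases inside $\mathcal{E}_n$ are, up to a slowly varying factor, linear in $n$ modulo $2q$ --- the relevant endpoint of the $n$th progression is $N$ minus a residue running linearly through $\mathbb{Z}/2q\mathbb{Z}$, and $|\psi'(N)|\le\tfrac{1}{4q}$ keeps the nonlinear correction small --- so after a summation by parts these terms collapse into complete Gauss sums $\sum_{n\bmod 2q}e((an^2+bn)/2q)$, of modulus $\lesssim\sqrt{q}$, while the smooth-remainder part of $\mathcal{E}_n$ is $O(1/q)$ per class and contributes only $O(1)\le\sqrt q$. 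An equivalent route is Poisson summation of each inner $m$-sum, which writes $\Omega=\tfrac{1}{2qN}\sum_{\ell\ne 0}I(\ell)\Gamma(\ell)$ with $\Gamma(\ell)=\sum_{n\bmod 2q}e((an^2+\ell n)/2q)$ a complete Gauss sum and $I(\ell)=\int_1^N e(\tfrac{r}{2q}t^2-\tfrac{\ell}{2q}t)\,dt$ a non-stationary oscillatory integral with $|I(\ell)|\lesssim q/|\ell|$. In either approach, squeezing out the sharp bound without a spurious logarithmic factor is precisely the careful bookkeeping carried out in \cite{MR0563894}, which I would therefore cite rather than redo.
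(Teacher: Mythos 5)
The paper gives no proof of this statement: it is imported verbatim as Theorem~1 (together with Theorem~5 for the identification of $g(a,q)$) of \cite{MR0563894}, which is exactly what you do. Your sketch of how the Fiedler--Jurkat--K\"orner argument would go --- splitting $k$ into residues mod $2q$, Poisson/Euler--Maclaurin on each progression, and square-root cancellation from complete Gauss sums in the error, with the honest caveat that the naive bookkeeping loses a logarithm --- is a faithful outline of that reference, but it is not required here, and deferring to the citation is the right call.
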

Note that the normalized Gauss sum satisfies $G_0(a,q)=0$ for $a\cdot q$ being odd, hence, $g(a,q)=G_0(a,q)$ always holds. 
Furthermore, adapting the integral in \eqref{eq:FJK} into our notation, we have
\begin{align*}
\frac{1}{N} \int_0^N e(rt^2/2q)\, dt=\gamma_N(2\xi-\tfrac{a}{q}).
\end{align*}
Hence \eqref{eq:FJK} turns into 
\begin{align}\label{eq:FJK--our}
\mathcal{F}_{\Z}(K_N)(\xi)=G_0(a,q) \gamma_N(2\xi-\tfrac{a}{q})+O(N^{-1} \sqrt{q}).
\end{align}
It holds whenever $\xi$ and $a/q$ satisfy \eqref{eq:xi=a/q_FJK}.

\subsection{The Estimate for $ c_N$}
This next lemma shows that we can take our first contribution to the High Pass term $H_{N,J}$ to be 
$ \mathcal F _{\mathbb Z } ^{-1} ( c_N \cdot \mathcal F _{\mathbb Z } f)$.

\begin{lemma}\label{lem:minor_arc} Let $c_N$ be defined as in \eqref{eq:KN=aN+cN}, it satisfies the estimate below uniformly in $M\leq N/4$.  
\begin{align} \label{e:minor_arc}
 \|c_N\|_{L^\infty}\lesssim M^{-1/2} \log M.
\end{align}
\end{lemma}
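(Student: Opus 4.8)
The plan is to estimate $\|c_N\|_{L^\infty}$ by first invoking the Fiedler--Jurkat--K\"orner formula \eqref{eq:FJK--our} to replace $\mathcal F_{\Z}K_N(\xi)$, on the relevant major arcs, with the model term $G_0(a,q)\gamma_N(2\xi-\tfrac aq)$, and then comparing this model term with the partial sum $a_N(\xi)$ that was subtracted off. Fix $\xi$. By Dirichlet's theorem, write $2\xi = \tfrac aq + \tfrac rq$ with $(a,q)=1$, $|r|\le \tfrac 1{4N}$, $0<q\le 4N$, so \eqref{eq:FJK--our} applies: $\mathcal F_{\Z}K_N(\xi) = G_0(a,q)\gamma_N(2\xi-\tfrac aq) + O(N^{-1}\sqrt q)$. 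The analysis then splits into two cases according to the size of $q$.

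\emph{Case 1: $q \ge M$ (minor arcs).} Here the main term is small: $|G_0(a,q)| \le q^{-1/2} \le M^{-1/2}$ by \eqref{eq:GaussG0_norm} and $|\gamma_N| \le 1$ by \eqref{eq:gammaN}, while the error $O(N^{-1}\sqrt q) = O(N^{-1/2})$ is harmless since $q \le 4N$ and $M\le N/4$, giving $N^{-1/2} \lesssim M^{-1/2}$ (indeed better). One must also check that $a_N(\xi)$ itself is $O(M^{-1/2}\log M)$ at such $\xi$; since each $a_{N,s}$ has frequency support near rationals of denominator $< 2^s \le M$, and by the disjointness \eqref{eq:eta_supp_disjoint} at most one term in each $\mathcal R_s$ is active, we get $|a_N(\xi)| \le \sum_{s=1}^m |G_0(a_s,q_s)| \lesssim \sum_{s\le m} 2^{-(s-1)/2} \lesssim 1$ — so here one rather uses that for $q\ge M$, $\xi$ is \emph{not} within $2^{-2s}$ of any rational of denominator $<2^s\le M$ unless that rational has large denominator close to $M$; the only surviving contribution is the top scale, of size $O(M^{-1/2})$. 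I would organize this so the bound $|c_N(\xi)| \lesssim M^{-1/2}$ drops out cleanly.

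\emph{Case 2: $q < M$ (major arcs).} Now the rational $a/q$ (or its halved/doubled companion, depending on parity — here one has to track the factor of $2$ between $\xi$ and $2\xi$ and between $G_0$ and $G$, using \eqref{eq:G0q=G2q} and \eqref{eq:Gauss_G_not_coprime}) lies in some $\mathcal R_s$ with $2^{s-1}\le q < 2^s \le M$, so the corresponding term $G_0(a,q)\eta_{2^{2s}}(2\xi-\tfrac aq)\gamma_N(2\xi-\tfrac aq)$ appears in $a_N(\xi)$. Then $c_N(\xi) = \mathcal F_{\Z}K_N(\xi) - a_N(\xi)$ equals $[\mathcal F_{\Z}K_N(\xi) - G_0(a,q)\gamma_N(2\xi-\tfrac aq)] + G_0(a,q)\gamma_N(2\xi-\tfrac aq)[1-\eta_{2^{2s}}(2\xi-\tfrac aq)] - (\text{other terms of } a_N)$. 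The first bracket is $O(N^{-1}\sqrt q) = O(N^{-1}\sqrt M)$, tiny. The second: when $\eta_{2^{2s}}(2\xi-\tfrac aq)\ne 1$ we have $|2\xi-\tfrac aq| \ge \tfrac14 2^{-2s}$, and then the decay estimate \eqref{eq:gammaN} gives $|\gamma_N(2\xi-\tfrac aq)| \le N^{-1}|2\xi-\tfrac aq|^{-1/2} \le N^{-1} 2^{s} \le N^{-1} M \le \tfrac14$; combined with $|G_0(a,q)| \le q^{-1/2} \le 2^{-(s-1)/2}$ this is $\lesssim M^{-1/2}$ at worst, but really one gets $\lesssim N^{-1}2^{s/2}\lesssim N^{-1}M^{1/2}$, again tiny — wait, this needs care when $s$ is the \emph{top} scale $s=m$ and $2\xi-\tfrac aq$ is only slightly outside the bump; I would handle that by noting $|\gamma_N|\le 1$ always, so this term is $\le |G_0(a,q)| \le 2^{-(m-1)/2} \lesssim M^{-1/2}$ unconditionally. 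Finally, the remaining terms of $a_N(\xi)$, indexed by $s' \ne s$: by \eqref{eq:eta_supp_disjoint} at most one rational per scale $s'$ contributes, and for $2\xi$ to be within $2^{-2s'}$ of a rational $a'/q'$ with $2^{s'-1}\le q' < 2^{s'}$ while also within $2^{-2s}$ of $a/q$ forces — by $|a/q - a'/q'| \ge 1/(qq') \ge 2^{-s-s'}$ — that $2^{-s-s'} \le 2^{-2s}+2^{-2s'} \lesssim 2^{-2\min(s,s')}$, hence $|s-s'|$ bounded; more usefully, $|\gamma_N(2\xi-\tfrac{a'}{q'})| \le N^{-1}|2\xi-\tfrac{a'}{q'}|^{-1/2}$ and $|2\xi - \tfrac{a'}{q'}| \ge \tfrac12 2^{-s-s'}$ gives $|G_0(a',q')\gamma_N(\cdots)| \lesssim 2^{-(s'-1)/2} N^{-1} 2^{(s+s')/2} = N^{-1} 2^{s/2} 2^{1/2} \lesssim N^{-1}M^{1/2} \lesssim M^{-1/2}$, and summing over the $O(m) = O(\log M)$ scales $s'$ produces the logarithmic loss: $\lesssim M^{-1/2}\log M$.

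The main obstacle I anticipate is not any single estimate but the bookkeeping: correctly matching the Dirichlet denominator $q$ from \eqref{eq:FJK--our} to the index $s$ with $2^{s-1}\le q<2^s$, tracking the parity conditions hidden in $G_0$ versus $G$ and the factor-of-two dilation between $\xi$ and $2\xi$ in \eqref{def:aN_gamma_eta}, and—crucially—treating the borderline/top scale $s=m$ where the decay of $\gamma_N$ is only marginal, so that one must fall back on the trivial bound $|\gamma_N|\le 1$ together with $|G_0(a,q)|\le 2^{-(m-1)/2}\lesssim M^{-1/2}$. Once the geometry of which rationals can be simultaneously $2^{-2s'}$-close to a fixed $\xi$ is pinned down (at most one per scale, and the contributing scales give a convergent-up-to-$\log$ sum of $2^{-s'/2}$-type terms), the $M^{-1/2}\log M$ bound follows; the $\log M$ is exactly the count of dyadic scales $s \le m$.
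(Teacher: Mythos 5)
Your overall architecture — Dirichlet approximation, the Fiedler--Jurkat--K\"orner expansion \eqref{eq:FJK--our}, a case split on whether the Dirichlet denominator $q_*$ is below or above $M$, and a scale-by-scale comparison with $a_N$ using the disjointness \eqref{eq:eta_supp_disjoint} and the decay \eqref{eq:gammaN} — is exactly the paper's. Your Case 2 is essentially correct (and in fact the bracket $G_0(a,q)\gamma_N(2\xi-\tfrac aq)\bigl[1-\eta_{2^{2s}}(2\xi-\tfrac aq)\bigr]$ you worry about vanishes identically, since $|2\xi-\tfrac{a_*}{q_*}|\le \tfrac{1}{4Nq_*}\le \tfrac{1}{16q_*^2}\le \tfrac14 2^{-2s_*}$ forces $\eta_{2^{2s_*}}(2\xi-\tfrac{a_*}{q_*})=1$, so your fallback there is redundant but harmless).

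The genuine gap is in Case 1, in your bound for $a_N(\xi)$. The assertion that ``for $q_*\ge M$, the only surviving contribution is the top scale'' is false: a bump $\eta_{2^{2s}}(2\xi-\tfrac aq)$ at a small scale $s$ can be active even when the Dirichlet approximant has denominator $q_*\approx N$ (take $2\xi=\tfrac12+\delta$ with $\delta$ a fixed small irrational; the scale-$1$ bump at $\tfrac12$ is nonzero while $q_*$ is huge). Your first attempt, $|a_N(\xi)|\lesssim\sum_{s\le m}2^{-(s-1)/2}\lesssim 1$, is indeed too weak, but the correct repair is not to discard the low scales; it is to run the very argument you use for the off-scales in Case 2, now with $q_*$ in place of $q$: if $a/q\in\mathcal R_s$ is active then $a/q\ne a_*/q_*$, so
\begin{equation}
\Bigl|2\xi-\frac aq\Bigr|\ \ge\ \frac{1}{qq_*}-\frac{1}{4Nq_*}\ \gtrsim\ \frac{1}{2^s q_*},
\end{equation}
whence by \eqref{eq:gammaN} and $|G_0(a,q)|\le q^{-1/2}\lesssim 2^{-s/2}$ each scale contributes $\lesssim 2^{-s/2}\cdot N^{-1}(2^sq_*)^{1/2}=N^{-1}\sqrt{q_*}\lesssim N^{-1/2}$ (using $q_*\le 4N$), and summing over the $m\approx\log M$ scales gives $N^{-1/2}\log M\le M^{-1/2}\log M$. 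This is precisely the paper's inequality \eqref{e:qq} and the estimate following \eqref{e:afar}; with that substitution your proof closes.
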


\begin{proof}
Recall that $ c_N = \mathcal F _{\mathbb Z } (K_N) - a_N$, and we need to estimate $ c_N (\xi ) $ for any $ \xi \in \mathbb T $.   
Dirichlet's theorem implies that for any $\xi$, there exists at least one reduced rational $a_*/q_*$ such that $1\leq q_*\leq 4N$ 
and $|2\xi-a_*/q_*|\leq 1/(4Nq_*)$. Let $s_*$ be defined as the unique number such that $a_*/q_*\in \mathcal{R}_{s_*}$.
Let us also note that $\xi$ and $a_*/q_*$ satisfy \eqref{eq:xi=a/q_FJK}.

We divide the discussion into two cases: 
(i). $s_*>m$.
(ii). $s_*\leq m$.

\smallskip \textit{Case (i).}
We estimate $\mathcal{F}_{\Z}(K_N)$ and $a_N$ separately.
For $\mathcal{F}_{\Z}(K_N)$, by \eqref{eq:FJK--our}, we have
\begin{align}\label{eq:case1_KN}
|\mathcal{F}_{\Z}(K_N)(\xi)|
&\lesssim |G_0(a_*, q_*)|\cdot |\gamma_N(2\xi-\frac{a_*}{q_*})|+N^{-1}\sqrt{q_*}
\\&\lesssim q_*^{-1/2}+N^{-1}\sqrt{q_*}\lesssim M^{-1/2},
\end{align}
where we have used the fact that $s_*>m$, hence $q_*\gtrsim M$, in the last line.
We also used the trivial estimate $\|\gamma_N\|_{L^\infty}\leq 1$.

Turning to $ a_N (\xi )$, we have 
\begin{align}
\lvert  a_N (\xi )\rvert & \leq \sum_{s=1} ^m \lvert  a _{N,s} (\xi )\rvert   . 
\\  \label{e:afar}
& \leq  \sum_{s=1} ^m \sum_{a/q\in \mathcal R_s} \lvert   G_0 (a,q) \rvert \cdot \lvert  \eta _{2 ^{2s}} (2 \xi - \tfrac{a}q ) \rvert \cdot \lvert \gamma _N (2 \xi - \tfrac a q) \rvert . 
\end{align}
For fixed $ \xi $ and $ s$ above, there is at most one $ a/q$ for which $ \eta _{2 ^{2s}} (2 \xi - \tfrac{a}q )  \neq 0$.  
And, for  any reduced $a/q \in \mathcal R_s $, we have 
\begin{equation} \label{e:qq}
\Bigl\lvert 2 \xi - \frac{a}q\Bigr\rvert \geq \Bigl\lvert \frac{a}q - \frac{a _{\ast }} {q _{\ast }}\Bigr\rvert 
\geq \frac{1} {q q _{\ast }} - \frac{1} {4N q _{\ast }} \gtrsim \frac1{q q _{\ast }} \gtrsim  \frac{1}{2^s q_{\ast}}
\end{equation}   
where we use $ q \leq M\leq N/4$.
Combine this estimate with the decay estimate  \eqref{eq:gammaN} on $ \gamma _N$ and the standard estimate on Gauss sums, to see that 
\begin{align*}
\eqref{e:afar} & \lesssim  N ^{-1} \sum_{s=1} ^{m} 2 ^{-s/2+s/2} \sqrt{q_{\ast}}   \lesssim  N^{-1/2} \log M\leq M^{-1/2} \log M,
\end{align*}
where we used $q_{\ast}\leq 4N$. This proves Case (i).

\smallskip\textit{Case (ii).} 
We estimate
\begin{align}
|\mathcal{F}_{\Z}(K_N)(\xi)-a_N(\xi)| & \leq 
\bigr\rvert G_0(a_*, q_*)\gamma_N(2\xi-\frac{a_*}{q_*})-a_{N,s_*}(\xi) \bigl\lvert 
\\& \quad + \label{e:ss}
\sum_{\substack{s=1\\ s\neq s_*}}^{m}|a_{N,s}(\xi)| + C N^{-1/2}.
\end{align}
The first term is zero.  Note that since $s_*\leq m$, we have $q_*\leq M\leq N/4$, hence
\begin{align*}
|2\xi-\frac{a_*}{q_*}|\leq \frac{1}{4N q_*}\leq \frac{1}{16 q_*^2}\leq \frac{1}{4} 2^{-2s_*},
\end{align*} 
which implies $\eta_{2^{2s_*}}(2\xi-\frac{a_*}{q_*})=1$.
Taking into account the disjointness of the supports of $\eta_{2^{2s_*}}$, see \eqref{eq:eta_supp_disjoint}, we have
\begin{align}\label{eq:case2_=}
G_0(a_*, q_*)\gamma_N(2\xi-\frac{a_*}{q_*})-a_{N,s_*}(\xi)=0.
\end{align}

For the term in \eqref{e:ss}, we argue in a manner similar to Case (i).  The inequality \eqref{e:qq} continues to hold, and we conclude 
in the same manner that 
\begin{align}\label{eq:case2_<>}
\sum_{\substack{s=1\\ s\neq s_*}}^{m}|a_{N,s}(\xi)|  \lesssim  M ^{-1/2} {\log M}   
\end{align}

Therefore, combining \eqref{eq:case2_<>} with \eqref{eq:case2_=}, we have
\begin{align*}
|\mathcal{F}_{\Z}(K_N)(\xi)-a_N(\xi)|\lesssim M^{-1/2}\log M.
\end{align*}
This proves the desired result. 
\end{proof}

\subsection{The Decomposition of $ a_N$}
In the rest of this section, we let $M=J=2^{s_0}$.
The multiplier $ a_N$ defined in \eqref{eq:KN=aN+cN} is further written as  $a_N = b _{N,1} + b _{N,2}$, where  
\begin{align}  \label{e:bN1def}
b_{N,1} & := \sum_{s=1} ^{s_0} \tilde{a}_{N,s}, 
\\
\label{def:tildea_Ns}
\tilde{a}_{N,s}(\xi)&:=\sum_{a/q\in \mathcal{R}_s} G_0(a,q) \eta_{qN^2/J} (2\xi-\frac{a}{q}) \gamma_N(2\xi-\frac{a}{q}).
\end{align}
There are two different properties needed.  The first is very easy. 

\begin{proposition}\label{p:bN2} We have the estimate 
\begin{equation}\label{e:bN2}
\lVert b _{N,2}\rVert _{ \ell ^{\infty} } \lesssim J^{-1/2} \log J.  
\end{equation}
\end{proposition}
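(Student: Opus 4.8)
The plan is to estimate $b_{N,2} = a_N - b_{N,1}$ directly, term by term, exploiting that $b_{N,2}$ collects exactly the pieces of each $a_{N,s}$ that are \emph{not} captured by the narrower cutoff $\eta_{qN^2/J}$ appearing in $\tilde a_{N,s}$. Writing out the difference,
\begin{align*}
b_{N,2}(\xi) = \sum_{s=1}^{s_0} \sum_{a/q\in\mathcal R_s} G_0(a,q)\,\Bigl[\eta_{2^{2s}}\bigl(2\xi-\tfrac aq\bigr) - \eta_{qN^2/J}\bigl(2\xi-\tfrac aq\bigr)\Bigr]\,\gamma_N\bigl(2\xi-\tfrac aq\bigr).
\end{align*}
Since $q < 2^s \leq J$ in $\mathcal R_s$, we have $qN^2/J \leq 2^s N^2/J$, and on the relevant range $q\le 2^s\le J$ one checks $qN^2/J \ge q \ge 2^{s-1} \gtrsim 2^{2s}/J$; more importantly $\eta_{2^{2s}}-\eta_{qN^2/J}$ is supported where $|2\xi - a/q| \gtrsim 1/(qN^2/J) = J/(qN^2)$ (because $\eta_{qN^2/J}$ already equals $1$ there) and $\lesssim 2^{-2s}$. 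On that annulus I would use the decay estimate \eqref{eq:gammaN}, namely $|\gamma_N(\zeta)| \le N^{-1}|\zeta|^{-1/2}$, which gives $|\gamma_N(2\xi-a/q)| \lesssim N^{-1}(qN^2/J)^{1/2} = q^{1/2}/(N\sqrt J) = q^{1/2} N^{-1} J^{-1/2}$.

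Next, for each fixed $\xi$ and each $s$, the disjointness of supports \eqref{eq:eta_supp_disjoint} means at most one fraction $a/q\in\mathcal R_s$ contributes, and for that fraction $q < 2^s$, so $|G_0(a,q)| \le q^{-1/2}$ by \eqref{eq:GaussG0_norm}. Combining with the $\gamma_N$ bound above, the $s$-th term is bounded by
\begin{align*}
q^{-1/2}\cdot q^{1/2} N^{-1} J^{-1/2} = N^{-1} J^{-1/2} \le J^{-1/2},
\end{align*}
uniformly, which is already summable-with-a-log: summing over $s = 1,\dots,s_0$ (and $s_0 = \log_2 J$) yields $\lVert b_{N,2}\rVert_\infty \lesssim s_0 N^{-1} J^{-1/2} \lesssim J^{-1/2}\log J$. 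I should double-check the edge case where the annulus is empty (i.e.\ $qN^2/J \geq c\,2^{2s}$), in which case that $s$ contributes nothing and the bound is trivially fine; this happens precisely when $2^s \lesssim N^2/J$, which is the generic situation, so in fact only the large-$s$ values of $s$ (those with $2^s \gtrsim N^2/J$) contribute at all — but keeping the crude count over all $s\le s_0$ costs only the harmless $\log J$ and keeps the argument clean.

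The main obstacle, such as it is, is purely bookkeeping: getting the comparison of the two bump scales $2^{2s}$ versus $qN^2/J$ right, and confirming that on the support of the difference $\eta_{2^{2s}} - \eta_{qN^2/J}$ one genuinely has $|2\xi - a/q|$ bounded below by a constant times $J/(qN^2)$ so that \eqref{eq:gammaN} is applicable with the gain $J^{-1/2}$. There is no arithmetic input beyond the trivial Gauss sum bound $|G_0|\le q^{-1/2}$ and no stationary-phase subtlety, which is consistent with the paper's remark that this estimate is ``very easy'' — all the real work is deferred to the companion property of $b_{N,1}$.
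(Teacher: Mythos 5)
Your argument is essentially the paper's own proof: on the support of the difference $\eta_{2^{2s}}-\eta_{qN^2/J}$ one has $|2\xi-\tfrac aq|\gtrsim J/(qN^2)$, so \eqref{eq:gammaN} gives $|\gamma_N(2\xi-\tfrac aq)|\lesssim q^{1/2}J^{-1/2}$, which against $|G_0(a,q)|\le q^{-1/2}$ (and at most one contributing $a/q$ per $s$) yields $J^{-1/2}$ per scale and hence $J^{-1/2}\log J$ after summing over $s\le s_0$. The only flaw is an arithmetic slip, $N^{-1}(qN^2/J)^{1/2}=q^{1/2}J^{-1/2}$ rather than $q^{1/2}N^{-1}J^{-1/2}$, which merely overstates the intermediate gain by a spurious factor of $N^{-1}$ and does not affect the conclusion.
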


\begin{proof}
The implicit definition of $ b _{N,2}  $ involves the differences $ \eta_{2^{2s}} (\theta ) - \eta_{qN^2/J}  (\theta )$. 
Observe that this difference is zero if $ \lvert  \theta \rvert < \frac{J} {4q N ^2 } $. Combine this with the Fourier decay 
estimate on $ \gamma _N$, \eqref{eq:gammaN}, to see that 
\begin{align*}
|(\eta_{2^{2s}}(2\xi-\frac{a}{q})- \eta_{qN^2/J} (2\xi-\frac{a}{q})) \gamma_N(2\xi-\frac{a}{q})|\lesssim q^{\frac{1}{2}} J^{-\frac{1}{2}}.
\end{align*}
Taking into account that $|G_0(a,q)|\leq q^{-\frac{1}{2}}$, we have
\begin{align}\label{eq:bN2_norm}
\|b_{N,2}\|_{\ell ^\infty}\lesssim J^{-1/2}\log{J}.
\end{align}
\end{proof}

The second estimate is at the core of the results of this paper. It is the Low Pass estimate below, and requires a sustained analysis to establish, 
which we take up in the next section. 

\begin{lemma}\label{l:bN1} 
For intervals $ I$ of length $ N ^2 $, and functions $ f$ supported on $ 2I$, there holds 
\begin{equation}\label{e:bN1}
\langle \mathcal F _{\mathbb Z } ^{-1} (b _{N,1}) \ast f  \rangle _{I, \infty } \lesssim J (\log J) ^2 \langle  f \rangle _{2I,1}.  
\end{equation}

\end{lemma}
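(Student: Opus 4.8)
The plan is to compare the operator with multiplier $b_{N,1}$ to a sum over scales $s$ of rescaled Hardy--Littlewood type averages, and to absorb all the arithmetic into the Gauss-sum coefficients. First I would fix $s$ and analyze the piece
\begin{align*}
\tilde a_{N,s}(\xi) = \sum_{a/q \in \mathcal R_s} G_0(a,q)\, \eta_{qN^2/J}\bigl(2\xi - \tfrac a q\bigr)\, \gamma_N\bigl(2\xi - \tfrac a q\bigr).
\end{align*}
The cutoff $\eta_{qN^2/J}$ localizes $2\xi$ to an interval of width $\sim J/(qN^2)$ around each fraction $a/q$, and by \eqref{eq:gammaN} on this interval $\gamma_N(2\xi - a/q) = \mathcal F_{\mathbb R}(h)(-N^2(2\xi - a/q))$ is essentially the multiplier of the continuous average at scale $N^2$, evaluated near frequency $0$. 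Recalling \eqref{eq:gammaN_Fourier_inv}, the inverse transform of $\eta_{qN^2/J}\gamma_N(2\cdot - a/q)$ is, up to the modulation $e(xa/(2q))$ and a dilation by $2$, a smoothed version of the continuous average kernel $h$ dilated to scale $N^2$, with an extra smoothing at scale $N^2/J$ coming from $\eta$. So $\mathcal F_{\mathbb Z}^{-1}(\tilde a_{N,s})\ast f$ is pointwise dominated by a constant times an average of $|f|$ over windows of length $\sim N^2$, but weighted by $\sum_{a/q\in\mathcal R_s}|G_0(a,q)|$ with the phases $e(xa/(2q))$; these phases are the obstruction.

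The key arithmetic step is to bound, for each fixed $x$ in $I$,
\begin{align*}
\Bigl| \sum_{a/q \in \mathcal R_s} G_0(a,q)\, e\bigl(\tfrac{xa}{2q}\bigr)\, \psi_{s}(x) \Bigr|
\end{align*}
where $\psi_s$ is the (nonnegative, $L^1$-normalized at the appropriate scale) kernel coming from $\eta\gamma_N$. Using $|G_0(a,q)| \le q^{-1/2}$ and collecting terms by denominator $q$, the sum over numerators $a$ coprime to $q$ of $e(xa/(2q))$ — together with the precise evaluation \eqref{eq:Gauss_G0} of $G_0$ as $q^{-1/2}$ times a Jacobi symbol times a root of unity — is exactly a Gauss-type / Ramanujan-type sum in $a$, and after completing it one is left with a multiple of $G(x,2q)/ q$, i.e. $q^{-1}$ times the number of square roots of $x$ modulo $2q$. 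Summing $q^{-1} G(x,2q)$ over dyadic ranges $2^{s-1}\le q < 2^s$ and then over $s\le s_0$ produces a logarithmic average of $G(x,\cdot)$, and this is precisely the quantity the paper says must be controlled uniformly in $x$ by $O((\log J)^2)$ after multiplication by $J$ — this is exactly \lemref{lem:low_goal} referenced in the introduction. Granting that estimate, each window average of $|f|$ contributes $\lesssim \langle f\rangle_{2I,1}$ (the support hypothesis $f$ on $2I$ and $|I| = N^2$ guarantees the windows of length $N^2$ centered in $I$ see only mass coming from $2I$, normalized correctly), and the bookkeeping gives the claimed bound $J(\log J)^2 \langle f\rangle_{2I,1}$.

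More concretely, the steps in order are: (1) write $\mathcal F_{\mathbb Z}^{-1}(b_{N,1}) = \sum_{s=1}^{s_0} \mathcal F_{\mathbb Z}^{-1}(\tilde a_{N,s})$ and compute each summand as a modulated dilate of a fixed smooth $L^1$ kernel of total mass $\lesssim 1$ at scale $N^2$, using \eqref{eq:gammaN_Fourier_inv} and the scaling of $\eta_{qN^2/J}$; (2) take absolute values, pull the convolution through, and reduce the pointwise bound at $x\in I$ to estimating $\sum_{a/q\in\mathcal R_s} |G_0(a,q)|$-weighted sums of the phases $e(xa/(2q))$ against the kernel mass; (3) invoke \eqref{eq:Gauss_G0} to identify the numerator sums with classical Gauss sums and thereby with $q^{-1} G(x,2q)$; (4) sum over $q \in [2^{s-1},2^s)$ and $s \le s_0 = \log_2 J$ and apply the uniform-in-$x$ logarithmic-average bound for $G(x,q)$ (the core estimate of the next section, \lemref{lem:low_goal}) to get $\lesssim J (\log J)^2$; (5) combine with the trivial bound $\langle (\text{window average of }|f|)\rangle_{I,\infty} \lesssim \langle f\rangle_{2I,1}$, valid since $f$ is supported on $2I$ and $|I| = N^2$. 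The main obstacle, by far, is step (4): the naive bound $G(x,2q)\le\sqrt{2q}$ would only give $J^{3/2}$, which is far too weak, so one genuinely needs the arithmetic fact that $q \mapsto G(x,q)$ is on average (logarithmically) of size $O(\log q)$ uniformly in $x$ — this is the heart of the paper and is precisely where the square-root-counting function $G(x,q)$ and its logarithmic average enter. Everything else (steps 1--3, 5) is routine harmonic-analytic bookkeeping once the Circle-method decomposition from \thmref{thm1:FJK} is in hand.
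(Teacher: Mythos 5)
Your reduction is exactly the paper's: expand the convolution to exhibit the complete coprime exponential sum $H(q,x)=\sum_{(a,q)=1}G_0(a,q)e(ax/2q)$, bound the smooth factor $\mathcal F_{\R}^{-1}(\eta_{qN^2/J}(2\cdot)\gamma_N(2\cdot))$ in $L^\infty$ by $J/(qN^2)$, use the support hypothesis to convert $N^{-2}\sum_y|f(y)|$ into $\langle f\rangle_{2I,1}$, and invoke the uniform bound $\bigl\|\sum_{q\le J}|H(q,\cdot)|/q\bigr\|_{\ell^\infty}\lesssim(\log J)^2$, which is \lemref{lem:low_goal}. Like the paper's own proof of \lemref{l:bN1}, you outsource that arithmetic core to \lemref{lem:low_goal} (proved separately via the square-root-counting identities and the divisor structure of the set where $H(q,x)\neq 0$), and you correctly flag it as the only non-routine step; the rest of your bookkeeping matches the paper.
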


\tikzset{
 treenode/.style = {shape=rectangle, rounded corners,
    draw, align=center,
    top color=white, bottom color=blue!10}}

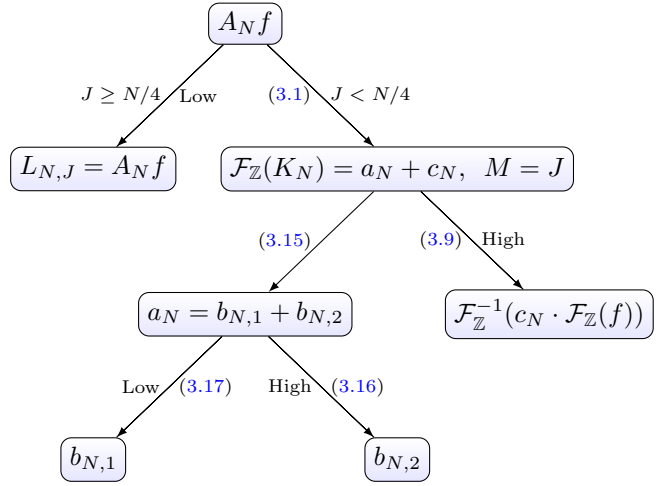
\begin{figure}
\begin{tikzpicture}[sibling distance=11.5 em, level distance=5.5em, 
    edge from parent/.style = {draw, -latex, font=\scriptsize}]
  \node [treenode] {$ \displaystyle  A _{N  } f $}
    child  { node [treenode]{$  L_{N,J} =  A _{N   } f  $} 
      edge from parent node [right]  { Low }
        edge from parent node [left]  {$ J\geq N/4$}  
                }
    child { node [treenode]{$  \displaystyle \mathcal{F}_{\Z}(K_ N )= a_ N  + c_ N $, { $M=J$}} 
      child { node [treenode]{ $\displaystyle a_ N  =   b _{N ,1} + b _{N ,2} $}
         child { node [treenode]{ $\displaystyle    b _{N ,1}$} 
           edge from parent node [left]  {Low} 
             edge from parent node [right]  {\eqref{e:bN1}}}
      child { node [treenode]{ $\displaystyle    b _{N ,2}$}   
              edge from parent node [left] {High} 
                edge from parent node [right]  {\eqref{e:bN2}}} 
            edge from parent node [left]  {\eqref{e:bN1def}}       
          }
      child { node [treenode] {$\displaystyle      \mathcal{F}_{\Z}^{-1}(c_{N}\cdot \mathcal{F}_{\Z}(f))   $} 
              edge from parent node [right]  {High} 
                edge from parent node [left]  {\eqref{e:minor_arc}}  } 
          edge from parent node [right]   {$ J < N/4$} 
            edge from parent node [left]  {\eqref{eq:KN=aN+cN}} };
\end{tikzpicture}

\caption{The High Low decomposition for Lemma~\ref{lem:High_Low_dec}. Compare to the more complicated decomposition in Figure~\ref{f:short}.}
\label{f:fixed}
\end{figure}

We have collected all the ingredients to complete the proof of our High Low decomposition. This argument is summarized in Figure~\ref{f:fixed}, 
as a point of comparison to the more complicated decomposition needed for the maximal function in Figure~\ref{f:short}. 

\begin{proof}[Proof of Lemma~\ref{lem:High_Low_dec}] 
Given integers $ N$ and $ J$, if $ J\geq N/4$, we set $ L _{N,J} = A_N f $, so that the High pass term is zero. 
Clearly, 
\begin{align}\label{eq:Low_J>N}
\la L_{N,J}\ra_{I,\infty}\leq \frac{1}{N}\sum_{x\in 2I} |f(x)|\lesssim N\la f\ra_{2I,1}\leq J\la f\ra_{2I,1}.
\end{align}
This proves the lemma in this case.  

The interesting case is $ J<N/4$. The Low pass term is given by $ b _{N,1}$ as defined in \eqref{e:bN1}.  
\begin{equation*}
L_{N,J}:=\mathcal{F}_{\Z}^{-1}( {b}_{N,1} ) \ast  f. 
\end{equation*}
By Lemma~\ref{l:bN1}, it satisfies the estimate required.  The High Pass term is then 
\begin{align}\label{def:High_Low}
H_{N,J}:=\mathcal{F}_{\Z}^{-1}(c_N ) \ast f  + \mathcal{F}_{\Z}^{-1}(b_{N,2} ) \ast f. 
\end{align}
By Lemma~\ref{lem:minor_arc} and Proposition~\ref{p:bN2}, this term satisfies the $ \ell ^2 $ estimate required of the 
High Pass term.  
\end{proof}

\section{The Low Pass Estimate} 
We give the proof of Lemma~\ref{l:bN1}, the core estimate of the proof.  
We will need these definitions.  
\begin{align}\label{def:H}
H(q,x) & :=\sum_{\substack{a=1 \\ (a,q)=1}}^{2q-1} G_0(a,q) e(ax/2q),
\\
H_0(q,x) & :=\sum_{a=0}^{q-1}G(a,q) e(ax/q),
\\
\textup{and} \qquad H_1(q,x) & :=\sum_{\substack{a=1\\ (a,q)=1}}^{q} G(a,q) e(ax/q).
\end{align}

The term to estimate is 
\begin{align}\label{eq:low_1}
(\mathcal{F}_{\Z}^{-1}(b_{N,1})* f)(x)=\sum_{y\in\Z}\, \sum_{q=1}^J H(q,x-y)\mathcal{F}_{\Z}^{-1} 
([\eta_{qN^2/J} (2\cdot) \gamma_N(2\cdot)]_{\mathrm{per}})(x-y) f(y),
\end{align}
where $[\eta_{qN^2/J} (2\cdot) \gamma_N(2\cdot)]_{\mathrm{per}}$ is obtained by extending $\eta_{qN^2/J} (2\cdot) \gamma_N(2\cdot)$ to a 1-periodic function.\linebreak
Obviously, $ \mathcal{F}_{\Z}^{-1} ([\eta_{qN^2/J} (2\cdot) \gamma_N(2\cdot)]_{\mathrm{per}})(x)=\mathcal{F}_{\mathcal{\R}}^{-1}(\eta_{qN^2/J} (2\cdot) \gamma_N(2\cdot))(x)$ for any $x\in \Z$. 
We have the following estimate
\begin{lemma}\label{lem:measure_conv}
\begin{align}
\|\mathcal{F}_{\mathcal{\R}}^{-1}(\eta_{qN^2/J} (2\cdot) \gamma_N(2\cdot))\|_{L^\infty}\lesssim \frac{J}{qN^2}.
\end{align}
\end{lemma}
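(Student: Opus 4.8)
The plan is to estimate the $L^\infty$ norm of $\mathcal{F}_{\R}^{-1}(\eta_{qN^2/J}(2\cdot)\gamma_N(2\cdot))$ directly from the integral representation of the inverse Fourier transform, exploiting the localization of the cutoff $\eta_{qN^2/J}(2\cdot)$ and the rescaling $\gamma_N(\xi) = \mathcal{F}_{\R}(h)(-N^2\xi)$ recorded in \eqref{eq:gammaN_Fourier_inv}. Write
\begin{equation*}
\mathcal{F}_{\R}^{-1}(\eta_{qN^2/J}(2\cdot)\gamma_N(2\cdot))(x)=\int_{\R} e(x\xi)\, \eta\!\left(\tfrac{qN^2}{J}\cdot 2\xi\right)\gamma_N(2\xi)\, d\xi,
\end{equation*}
and change variables $u=2\xi$ (picking up a harmless factor of $\tfrac12$). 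The key point is that the support of $\eta(qN^2 u/J)$ is contained in $|u|\le \tfrac{J}{2qN^2}$, so the integral is over an interval of length $\lesssim \tfrac{J}{qN^2}$.

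First I would bound the integrand in $L^\infty$: we have $|\eta|\le 1$ and, by the trivial estimate $\|\gamma_N\|_{L^\infty}\le 1$ from \eqref{eq:gammaN}, $|\gamma_N(u)|\le 1$. Therefore
\begin{equation*}
\left|\mathcal{F}_{\R}^{-1}(\eta_{qN^2/J}(2\cdot)\gamma_N(2\cdot))(x)\right|\le \tfrac12\int_{|u|\le J/(2qN^2)} 1\, du \lesssim \frac{J}{qN^2},
\end{equation*}
uniformly in $x\in\R$, which is exactly the claimed bound. This is essentially the whole argument; the estimate is lossless precisely because one only uses the measure of the support of the frequency cutoff together with the $L^\infty$ bound on $\gamma_N$.

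There is really no serious obstacle here — the lemma is the "very easy" companion to the genuinely hard Lemma~\ref{l:bN1}, and what makes it work is that the bump $\eta$ has been rescaled to live at frequency scale $J/(qN^2)$, which is narrow when $q$ is large or when $N$ is large relative to $J$. The only point requiring a word of care is the bookkeeping of the factor of $2$ coming from the dilation $2\cdot$ inside both $\eta$ and $\gamma_N$, and the observation that $\mathcal{F}_{\Z}^{-1}$ of the periodization agrees with $\mathcal{F}_{\R}^{-1}$ of the non-periodized function at integer points (already noted in the text preceding the lemma), so that passing to the real-variable integral is legitimate. One could alternatively phrase the same computation using \eqref{eq:gammaN_Fourier_inv} to write the inverse transform as a convolution of $h(\cdot/N^2)/N^2$ with the inverse transform of $\eta(qN^2 u/J)$, but the direct support-times-sup bound above is the cleanest route.
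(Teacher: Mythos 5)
Your proof is correct, and it takes a slightly different (and more elementary) route than the paper. You bound $\lVert \mathcal{F}_{\R}^{-1}(m)\rVert_{L^\infty}$ by $\lVert m\rVert_{L^1}\leq |\mathrm{supp}(m)|\cdot\lVert m\rVert_{L^\infty}$, using only $\lvert\eta\rvert\leq 1$, $\lVert\gamma_N\rVert_{L^\infty}\leq 1$, and the fact that $\eta_{qN^2/J}(2\cdot)$ is supported on an interval of length $\lesssim J/(qN^2)$; the identity \eqref{eq:gammaN_Fourier_inv} is never needed. The paper instead writes the kernel as the convolution $\mathcal{F}_{\R}^{-1}(\eta_{qN^2/J}(2\cdot))\ast\mathcal{F}_{\R}^{-1}(\gamma_N(2\cdot))$ and applies Young's inequality, pairing the $L^1$ bound \eqref{eq:h_L1} on the dilate of $h$ with the $L^\infty$ bound \eqref{eq:eta_Linfty} on the dilated Schwartz bump. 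Both arguments are lossless and give the same constant up to factors of $2$. The one thing the paper's route buys is that the same convolution representation is reused verbatim in Lemma~\ref{lem:sparse_measure_con} to extract the pointwise decay $q\tau^2/(Jy^2)$ for $\lvert y\rvert>4\tau^2$, which the crude support-times-sup bound cannot produce; for the $L^\infty$ statement of the present lemma alone, your argument is entirely sufficient.
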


\begin{proof}
We have, by \eqref{eq:gammaN_Fourier_inv}, that 
\begin{align}\label{eq:gammaN_inv}
\mathcal{F}_{\R}^{-1}(\gamma_N(2\cdot))(t)=\frac{1}{2N^2} h(-\frac{t}{2N^2}).
\end{align}
Hence 
\begin{align}\label{eq:h_L1}
\|\mathcal{F}_{\R}^{-1}(\gamma_N(2\cdot))\|_{L^1}\lesssim 1.
\end{align}
We also have
\begin{align}\label{eq:eta_qN2_inv}
\mathcal{F}_{\R}^{-1}(\eta_{qN^2/J}(2\cdot))(t)=\frac{J}{2qN^2} \mathcal{F}_{\R}^{-1}(\eta)(\frac{J}{2qN^2}t),
\end{align}
where $\mathcal{F}_{\R}^{-1}(\eta)$ is a Schwarz function. 
Hence 
\begin{align}\label{eq:eta_Linfty}
\|\mathcal{F}_{\R}^{-1}(\eta_{qN^2/J})\|_{L^\infty}\lesssim \frac{J}{qN^2}.
\end{align}
Combining \eqref{eq:h_L1} with \eqref{eq:eta_Linfty}, we have
\begin{align*}
\|\mathcal{F}_{\mathcal{\R}}^{-1}(\eta_{qN^2/J} (2\cdot) \gamma_N(2\cdot))\|_{L^\infty}\lesssim \frac{J}{qN^2},
\end{align*}
which is the desired result. 
\end{proof}

Therefore, by \eqref{eq:low_1} and Lemma \ref{lem:measure_conv}, we have
\begin{align*}
|(\mathcal{F}_{\Z}^{-1}(b_{N,1})* f)(x)|
&\lesssim \frac{J}{N^2} \sum_{y\in \Z}\, \sum_{q=1}^J \frac{|H(q,x-y)|}{q} |f(y)|
\\&
\lesssim J \Bigl\|\sum_{q=1}^J   \frac{ |H(q,\cdot)| } q  \Bigr\|_{\ell^{\infty}} \la f\ra_{2I,1}.
\end{align*}
 The required Low Pass estimate is a consequence of  the following

\begin{lemma}\label{lem:low_goal}
There exists an absolute constant $C>0$ such that
\begin{align*}
\Bigl\|\sum_{q=1}^J   \frac{ |H(q,\cdot)| } q  \Bigr\|_{\ell^{\infty}} \lesssim (\log J)^2. 
\end{align*}
 \end{lemma}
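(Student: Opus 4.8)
The plan is to estimate $|H(q,x)|$ via the explicit Gauss sum formulas, show that $|H(q,x)|$ is essentially a divisor-type count related to $G(x,q)$ (the number of square roots of $x$ mod $q$), and then sum $|H(q,x)|/q$ over $q\le J$ using multiplicativity and the classical estimate on averages of the divisor function.

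First I would unwind the definition \eqref{def:H}. Using \eqref{eq:G0q=G2q} we have $G_0(a,q)=G(a,2q)$, so $H(q,x)=\sum_{(a,q)=1,\ 1\le a<2q} G(a,2q)\,e(ax/2q)$. The key algebraic point is that such a twisted sum of Gauss sums is itself computable: completing the square, $\sum_a G(a,2q)e(ax/2q)$ is (up to normalization and a harmless factor) $\frac{1}{2q}\sum_n\sum_a e(a(n^2+x)/2q)$, and the inner sum over $a$ coprime to $q$ is a Ramanujan-type sum that is bounded by the number of $n$ mod $2q$ with $n^2\equiv -x$, i.e. $|H(q,x)|\lesssim q^{-1/2}\cdot(\text{something like }G(-x,2q))$ after accounting for the $q^{-1/2}$ in the normalization of $G$. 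More carefully, one should split by $d=(a,q)$, apply \eqref{eq:Gauss_G_not_coprime}, and arrive at a bound of the shape
\begin{align*}
|H(q,x)|\lesssim q^{-1/2}\sum_{d\mid q} d^{1/2}\,\mathbf{1}[d\mid x]\,\#\{n\bmod 2(q/d): n^2\equiv -x/d\}^{?}
\end{align*}
— the precise bookkeeping is routine but must be done carefully, and the upshot should be $|H(q,x)|\lesssim q^{-1/2}\,G(x,q)\,q^{o(1)}$ or, better, a clean multiplicative majorant. Since $H$ is multiplicative in $q$ for fixed $x$ (as a product over prime powers, via CRT), it suffices to bound $|H(p^k,x)|$ at prime powers: there one finds $|H(p^k,x)|\lesssim p^{-k/2}$ times a bounded factor unless $p^k\mid x$-type degeneracies occur, in which case one gets at most $p^{k/2}$ times a bounded factor, but such degenerate $p^k$ are constrained by $x$.

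The main step is then the summation $\sum_{q\le J}|H(q,x)|/q$. Write $|H(q,x)|\le h(q,x)$ for a multiplicative majorant with $h(p^k,x)\le C\min\{p^{-k/2}, p^{k/2}\mathbf{1}[p^{k}\mid \text{divisor of }x]\}\cdot(\text{small})$. The contribution of the "generic" part, where $h(p^k,x)\lesssim p^{-k/2}$, gives $\sum_q h(q,x)/q\lesssim\sum_q q^{-3/2+o(1)}=O(1)$, which is harmless. The dangerous part is where $h(q,x)$ can be as large as $\sqrt q$: this forces $q$ to divide (a near-divisor of) $x$, or to be built from prime powers dividing $x$. Summing $\sqrt q\cdot q^{-1}=q^{-1/2}$ over such $q$ is still not obviously $O(\log J)^2$ unless one controls how many $q\le J$ can be so special. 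The right tool is: $\sum_{q\mid m, q\le J} 1 = $ number of divisors restricted, and more usefully $\sum_{q\le J}\frac{G(x,q)}{q}$-type sums are handled by the identity $G(x,q)=\sum_{d^2\mid (x\text{-part})}(\dots)$ reducing $G(x,q)$ to a divisor function of a bounded power of the relevant part of $x$, and then $\sum_{q\le J} d(q)/q\lesssim(\log J)^2$, the classical bound. So the final inequality $(\log J)^2$ comes precisely from $\sum_{q\le J}\frac{d(q)}{q}\asymp(\log J)^2$.

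I expect the main obstacle to be obtaining a bound on $|H(q,x)|$ that is simultaneously (a) \emph{uniform in $x$} and (b) tight enough that the $q$-sum only costs $(\log J)^2$ rather than a power of $J$. The naive bound $|G_0(a,q)|\le q^{-1/2}$ plus $\#\{a\}\le 2q$ gives $|H(q,x)|\le 2q^{1/2}$, and then $\sum_{q\le J} q^{-1/2}\asymp J^{1/2}$ — far too weak. So one genuinely needs the cancellation in the $a$-sum, realized through the Ramanujan-sum/square-root-counting structure, and then needs the arithmetic fact that $x$ cannot have too many "special" divisors $q\le J$ with many square roots. This is the point the abstract flags as requiring "an estimate uniform in $x$," and it is where a careful multiplicative analysis of $G(x,q)$ at prime powers (distinguishing $p$ odd, $p=2$, and the exponent of $p$ in $x$) is unavoidable. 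I would set this up by first proving a prime-power bound $|H(p^k,x)|\lesssim (k+1)\min\{p^{-k/2}, p^{k/2}/p^{\lceil k/2\rceil}\cdot\mathbf{1}[\dots]\}$, then multiply out, then compare the resulting Dirichlet-type sum to $\sum_{q\le J} d(q)/q$.
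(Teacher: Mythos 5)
Your high-level strategy is the paper's: reduce $H(q,x)$ by multiplicativity to prime powers, identify it there with differences of square-root counts $r_{p^k}(-x)-r_{p^{k-1}}(-x)$, and exploit that large values of $H$ force $q$ into a thin, $x$-dependent set. But two of your quantitative claims are false and the step that actually produces $(\log J)^2$ is missing. First, the ``generic'' bound $|H(p^k,x)|\lesssim p^{-k/2}$ fails already at $k=1$: for an odd prime $p\nmid x$ one has $H(p,x)=r_p(-x)-1\in\{\pm 1\}$, so $|H(p,x)|=1$, not $p^{-1/2}$. Consequently your claim that the generic part sums to $O(1)$ is wrong --- the squarefree products of primes not dividing $x$ contribute $\sum_{n\le J}1/n\approx\log J$, and this is in fact one of the two logarithms in the final answer. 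For the same reason the proposed shape $|H(q,x)|\lesssim q^{-1/2}G(x,q)q^{o(1)}$ cannot be right: the right-hand side is $q^{o(1)}$ (since $G(x,q)\le\sqrt q$), while $|H(q,x)|$ genuinely attains $\sqrt q$ (e.g.\ $q=p^k$, $k$ even, $p^k\mid x$).

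The more serious gap is the reduction of the ``dangerous'' part to $\sum_{q\le J}d(q)/q$. What the argument actually requires is the precise vanishing statement at prime powers: $H(p^k,x)=0$ unless either ($k$ is even and $p^k\mid x$) or $p^{k-1}\mid x$ but $p^k\nmid x$, with $|H(p^k,x)|\le p^{\lfloor k/2\rfloor}$ on this set; and then the observation that on the admissible exponent set $k\mapsto k-\lfloor k/2\rfloor$ is injective, so that $\sum_{\mathbf k}\prod_j p_j^{-(k_j-\lfloor k_j/2\rfloor)}$ injects into $\sum_{n\le J}1/n\lesssim\log J$. Without the evenness constraint and this injectivity, the sum over $q$ built from primes dividing $x$ is only bounded by something like $\sum_{\mathrm{rad}(q)\mid x,\ q\le J}q^{-1/2}=\prod_{p\mid x}(1-p^{-1/2})^{-1}$, which for $x$ a primorial is $\exp\bigl(c\sqrt{\log J}/\log\log J\bigr)$ --- far beyond any power of $\log J$ --- so the thinness of the admissible set is not optional bookkeeping but the heart of the lemma. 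Finally, the case of even $q$, where the Jacobi symbol has the summation variable $a$ in the denominator, is dismissed as routine; in the paper it requires quadratic reciprocity, a decomposition of the $a$-sum into residue classes mod $8$, and an auxiliary sum $\tilde H(q,x)$, and it is where most of the work in establishing the vanishing conditions is spent.
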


We remark that one can verify the  square root upper bound
$ |H(q,\cdot)| \lesssim \sqrt q $. This shows that the term above can be bounded by at most $ C \sqrt J \cdot \log J$. This yields a non-trivial 
$ \ell ^{p}$ improving estimate, but not the sharp estimate.   To verify the estimate above, it is 
essential that for fixed $ x$, the term $ \lvert  H (q,x)\rvert $ 
 can be as big as $ C\sqrt q$  for a few choice of $ q$.  
The rest of the section will be devoted to proving Lemma \ref{lem:low_goal}. 

\subsection{Preliminary Observations}

First, we do a few preliminary computations about $H(q,x), H_0(q,x)$ and $H_1(q,x)$.
\begin{lemma}\label{lem:H=H1}
For odd $q\geq 3$, we have
$H(q,x)=H_1(q,x)$. We also note $H(1,x)\equiv 0$, while $H_1(1,x)\equiv 1$.
\end{lemma}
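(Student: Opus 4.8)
\textbf{Proof proposal for Lemma~\ref{lem:H=H1}.}

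The plan is to compare the defining sums for $H(q,x)$ and $H_1(q,x)$ term by term, exploiting the parity structure of the Gauss sums recalled in \S\ref{s:prelim}. Recall that $H(q,x) = \sum_{a=1,\,(a,q)=1}^{2q-1} G_0(a,q) e(ax/2q)$, and since $q$ is odd, \eqref{eq:GaussG0_norm} (and \eqref{eq:Gauss_G0}) tells us that $G_0(a,q)$ vanishes unless $a$ is even. So the sum over $a\in\{1,\dots,2q-1\}$ coprime to $q$ collapses to a sum over the \emph{even} residues $a = 2b$ with $b$ ranging over a complete reduced system mod $q$; concretely, as $b$ runs over $\{1,\dots,q\}$ with $(b,q)=1$, the values $2b \bmod 2q$ run over exactly the even integers in $\{1,\dots,2q-1\}$ coprime to $q$ (here one uses that $q$ is odd, so $(2b,q)=(b,q)$ and $2$ is invertible mod $q$).

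Next I would substitute. For $a = 2b$ one has $G_0(2b,q) = G(2b,2q)$ by \eqref{eq:G0q=G2q}, and by \eqref{eq:Gauss_G_not_coprime}, since $q$ is odd, $G(2b,2q) = G(b,q)$. Meanwhile $e(ax/2q) = e(2bx/2q) = e(bx/q)$. Therefore
\begin{align*}
H(q,x) = \sum_{\substack{b=1\\(b,q)=1}}^{q} G(b,q)\, e(bx/q) = H_1(q,x),
\end{align*}
which is exactly the claim for odd $q\geq 3$. The hypothesis $q\geq 3$ is only needed so that the reduced residue system is nonempty in the expected way and the rewriting is unambiguous; the case $q=1$ is genuinely different and is handled separately below.

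For the two stray identities: when $q=1$, the sum defining $H(1,x)$ ranges over $a\in\{1\}$ with $(a,1)=1$, and $G_0(1,1) = \tfrac{1}{2}\sum_{n=0}^{1} e(n^2/2) = \tfrac12(1 + e(1/2)) = \tfrac12(1-1) = 0$ (equivalently, $a\cdot q = 1$ is odd, so $G_0(1,1)=0$ by \eqref{eq:Gauss_G0}), giving $H(1,x)\equiv 0$. On the other hand $H_1(1,x) = G(1,1) e(0) = \tfrac{1}{1}\sum_{n=0}^{0} e(0) = 1$, so $H_1(1,x)\equiv 1$. I expect no real obstacle here; the only point requiring care is the bookkeeping of the change of variables $a=2b$ and verifying that it is a bijection between the relevant residue sets when $q$ is odd — everything else is a direct appeal to the Gauss sum identities \eqref{eq:G0q=G2q}, \eqref{eq:Gauss_G0}, and \eqref{eq:Gauss_G_not_coprime}.
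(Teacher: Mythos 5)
Your proposal is correct and follows essentially the same route as the paper: restrict to even $a$ using the vanishing of $G_0(a,q)$ for $a\cdot q$ odd, reindex via $a=2b$, and apply $G_0(a,q)=G(a,2q)$ together with $G(2b,2q)=G(b,q)$, plus the direct evaluation at $q=1$. No gaps.
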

\begin{proof}
The values of $H(1,x)$ and $H_1(1,x)$ can be computed from \eqref{eq:Gauss_G0} and \eqref{eq:Gauss_G}. 
We only need to prove the part for odd $q\geq 3$ now.
By \eqref{eq:Gauss_G0}, we have
\begin{align*}
H(q,x)
=&\sum_{\substack{a=1\\ (a,q)=1}}^{2q-1} G_0(a,q) e(ax/2q)   &   \eqref{def:H}\\
=&\sum_{\substack{a=1\\ (a,q)=1\\ a \text{ even}}}^{2q-1} G_0(a,q) e(ax/2q)    & \eqref{eq:Gauss_G0}\\
=&\sum_{\substack{a=1\\ (a,q)=1\\ a \text{ even}}}^{2q-1} G(a,2q) e(ax/2q)  & \eqref{eq:G0q=G2q}\\
=&\sum_{\substack{a=1\\ (a,q)=1}}^{q-1} G(2a, 2q) e(ax/q)
=H_1(q,x),
\end{align*}
where we used $G(2a,2q)=G(a,q)$ to obtain the last line. 
Let us observe that when $q=1$,
\begin{align*}
\sum_{\substack{a=1\\ (a,q)=1}}^{q-1}\neq \sum_{\substack{a=1\\ (a,q)=1}}^{q}.
\end{align*}
This is the reason why $H(1,x)$ and $H_1(1,x)$ take different values. 
\end{proof}

The function  $H_0(q,x)$ counts the number of square roots, as we see here. 
\begin{lemma}\label{lem:H_0}
\begin{align*}
H_0(q,x)=r_q(-x),
\end{align*}
in which $r_q(x)$ denotes the number of square roots $\ell$ of $x$ mod $q$, satisfying $0\leq \ell\leq q-1$.
\end{lemma}
\begin{proof}
This is a direct computation.
Indeed,
\begin{align*}
H_0(q,x)=&\frac{1}{q} \sum_{\ell=0}^{q-1} \sum_{a=0}^{q-1} e(a\ell^2/q) e(ax/q)\\
=&\frac{1}{q} \sum_{\ell=0}^{q-1} q\cdot \chi_{\ell^2+x\equiv 0 \text{ (mod } q\text{)}}\\
=&r_q(-x).
\end{align*}
This proves Lemma \ref{lem:H_0}.
\end{proof}

Let $QR(q)$ denote the quadratic residues of $q$ that are coprime to $q$.
It is well-known that for an odd prime number $p$, the following holds for any $k\geq 1$:
\begin{align}\label{eq:QR=QR}
x\in QR(p^k)\ \ \ \text{iff}\ \ \ x\in QR(p).
\end{align}

We show
\begin{lemma}\label{lem:rx}
Let $k\geq 1$.
Let $p$ be an odd prime.
Let $n\geq 0$ be such that $x=p^n x'$, where $(x', p)=1$. We have
\begin{align*}
r_{p^k}(x)=
\begin{cases}
p^{\lfloor \frac{k}{2} \rfloor}\qquad\ \  \text{if } n\geq k\\
2p^{\frac{n}{2}}\qquad\ \  \text{if } n \text{ is even satisfying }n<k, \text{ and } x' \in QR(p)\\
0\qquad\qquad \text{otherwise}
\end{cases}
\end{align*}
In particular, when $k=1$, we have
\begin{align}\label{eq:rx_<1}
|r_p(x)-1|\leq 
\begin{cases} 
1\qquad \text{ if } n=0\\
0\qquad \text{ if } n\geq 1
\end{cases}
\end{align}
\end{lemma}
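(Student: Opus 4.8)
The plan is to prove the counting formula for $r_{p^k}(x)$ by a direct but careful lifting argument, reducing mod $p$ first and then using Hensel-type lifting, with special attention to the parity of the $p$-adic valuation $n$ of $x$.

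First I would handle the case $n \geq k$: here $x \equiv 0 \pmod{p^k}$, so $r_{p^k}(x)$ counts solutions $\ell$ with $0 \le \ell \le p^k-1$ and $\ell^2 \equiv 0 \pmod{p^k}$. Since $p$ is prime, $p^k \mid \ell^2$ forces $p^{\lceil k/2 \rceil} \mid \ell$, and conversely any such $\ell$ works; counting multiples of $p^{\lceil k/2 \rceil}$ in $[0, p^k-1]$ gives $p^{k - \lceil k/2\rceil} = p^{\lfloor k/2 \rfloor}$, which is the claimed value.

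Next, the case $n < k$. Write $x = p^n x'$ with $(x',p)=1$ and suppose $\ell^2 \equiv x \pmod{p^k}$. Writing $v_p(\ell^2) = 2 v_p(\ell)$, matching valuations forces $2 v_p(\ell) = n$ as long as $n < k$; in particular if $n$ is odd there are no solutions, explaining the ``otherwise'' case. So assume $n = 2j$ is even and $\ell = p^j m$ with $(m,p)=1$; then the congruence becomes $m^2 \equiv x' \pmod{p^{k-2j}}$. By \eqref{eq:QR=QR}, this is solvable iff $x' \in QR(p)$, and in that case the number of solutions $m$ modulo $p^{k-2j}$ coprime to $p$ is exactly $2$ (the two square roots of a unit square mod an odd prime power). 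Each such $m$ modulo $p^{k-2j}$ lifts to $p^j$ values of $\ell = p^j m$ modulo $p^k$ (since $m$ ranges modulo $p^{k-2j}$ and is multiplied by $p^j$ to land in a range of size $p^k$), giving $2 p^j = 2 p^{n/2}$ solutions, as claimed. Finally, specializing to $k=1$: if $n = 0$ then $x' = x$ and $r_p(x) = 2$ when $x \in QR(p)$ and $r_p(x) = 0$ otherwise, so $|r_p(x)-1| = 1$ in both subcases; if $n \ge 1$ then $x \equiv 0 \pmod p$ and $r_p(x) = p^0 = 1$, so $|r_p(x)-1| = 0$. This yields \eqref{eq:rx_<1}.

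The main obstacle I anticipate is the bookkeeping in the lifting step: correctly tracking that a solution $m$ modulo $p^{k-2j}$ (coprime to $p$) corresponds to $p^j$ distinct residues $\ell = p^j m$ modulo $p^k$, and confirming that the count of unit square roots modulo an odd prime power is exactly $2$ (not more) — this uses that the unit group of $\Z/p^k\Z$ is cyclic for odd $p$, so $x^2 = 1$ has exactly two solutions. One should also be careful that the three cases in the formula are genuinely exhaustive and mutually exclusive, i.e.\ that when $n < k$ is odd, or $n < k$ is even but $x' \notin QR(p)$, there really are no solutions — this follows from the valuation-matching argument above. These are all elementary facts about $p$-adic valuations and cyclic groups, so no single step is deep, but the argument must be assembled with some care.
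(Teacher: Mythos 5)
Your proposal is correct and follows essentially the same route as the paper: valuation matching to kill the case of odd $n<k$, the substitution $\ell=p^{n/2}m$ to reduce to $m^2\equiv x'\pmod{p^{k-n}}$ combined with \eqref{eq:QR=QR} and the two-root count for units, and a direct count of multiples of $p^{\lceil k/2\rceil}$ for $n\geq k$ (which the paper leaves as ``easily checked''). No gaps.
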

\begin{proof}
The case when $n\geq k$ is easily checked.
If $n<k$ and $n$ is odd, we have $p^n \mid \ell^2$. Hence $p^{n+1}\mid \ell^2$, which forces $p^{n+1}\mid x$. This is impossible. 
If $n<k$ and $n$ is even. Let $x=p^n x'$ and $\ell=p^{n/2} \ell'$. We then have 
\begin{align*}
r_{p^k}(x)=p^{\frac{n}{2}} r_{p^{k-n}}(x').
\end{align*}
Note that $(x', p)=1$, hence we have
\begin{align*}
r_{p^{k-n}}(x')=
\begin{cases}
2\qquad \qquad \text{if } x' \in QR(p)\\
0\qquad \qquad \text{otherwise}
\end{cases}
\end{align*}
This proves Lemma \ref{lem:rx}.
\end{proof}

The next lemma is a simple consequence of the previous one. 
\begin{lemma}\label{lem:rx-rx}
For $k\geq 2$, we have
\begin{align*}
|r_{p^k}(x)-r_{p^{k-1}}(x)|=
\begin{cases}
p^{\frac{k}{2}}-p^{\frac{k}{2}-1}\qquad\qquad\, \text{if } k \text{ is even}, \text{and } p^k \mid x\\
p^{\lfloor \frac{k-1}{2} \rfloor}\qquad\qquad\qquad \text{if } x=p^{k-1} x' \text{ with } (x',p)=1\\
0 \qquad\qquad\qquad\qquad\, \text{otherwise}
\end{cases}
\end{align*}
\end{lemma}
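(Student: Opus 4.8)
\textbf{Proof proposal for Lemma~\ref{lem:rx-rx}.}

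The plan is to deduce the claim directly from the closed form for $r_{p^k}(x)$ established in Lemma~\ref{lem:rx}, by writing $x=p^n x'$ with $(x',p)=1$ (allowing $n=\infty$ to encode $x=0$, which can be treated separately or by a limiting convention) and splitting into the cases determined by how $n$ compares to $k-1$ and $k$. First I would record, from Lemma~\ref{lem:rx}, the three possible values of $r_{p^k}(x)$: it equals $p^{\lfloor k/2\rfloor}$ when $n\ge k$; it equals $2p^{n/2}$ when $n<k$, $n$ even, and $x'\in QR(p)$; and it equals $0$ otherwise. I would write the analogous list for $r_{p^{k-1}}(x)$, replacing $k$ by $k-1$ throughout. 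The point is that both quantities depend on $x$ only through $n$ and through the single bit ``$x'\in QR(p)$,'' so the difference can be computed by a short finite case analysis on $n$.

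The case breakdown I would carry out is: (a) $n\ge k$, where both terms are ``large'': $r_{p^k}(x)=p^{\lfloor k/2\rfloor}$ and $r_{p^{k-1}}(x)=p^{\lfloor (k-1)/2\rfloor}$. If $k$ is odd these floor exponents are equal, so the difference is $0$; if $k$ is even they are $k/2$ and $k/2-1$, giving $p^{k/2}-p^{k/2-1}$, which matches the first branch (note $n\ge k$ in particular covers $p^k\mid x$). (b) $n=k-1$: then $r_{p^{k-1}}(x)=p^{\lfloor(k-1)/2\rfloor}$ (since $n\ge k-1$), while for $r_{p^k}(x)$ we are in the regime $n<k$, so $r_{p^k}(x)$ is $2p^{(k-1)/2}$ if $k-1$ is even and $x'\in QR(p)$, and $0$ otherwise. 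One checks that when $k-1$ is odd, $r_{p^k}(x)=0$ and $\lfloor(k-1)/2\rfloor=(k-2)/2$, and indeed $|0-p^{(k-2)/2}|=p^{\lfloor(k-1)/2\rfloor}$; when $k-1$ is even, $x'\in QR(p)$ gives $|2p^{(k-1)/2}-p^{(k-1)/2}|=p^{(k-1)/2}=p^{\lfloor(k-1)/2\rfloor}$, and $x'\notin QR(p)$ gives $|0-p^{(k-1)/2}|=p^{\lfloor(k-1)/2\rfloor}$. Either way the difference is exactly $p^{\lfloor(k-1)/2\rfloor}$, matching the second branch (which is stated precisely for $n=k-1$). (c) $n<k-1$: here $r_{p^k}(x)$ and $r_{p^{k-1}}(x)$ are governed by the same criterion ($n$ even and $x'\in QR(p)$) and, when nonzero, both equal $2p^{n/2}$; when zero, both vanish. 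So the difference is $0$, landing in the ``otherwise'' branch.

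I do not anticipate a genuine obstacle: this is a bookkeeping argument, and the only mild subtlety is keeping the parity of $k$ versus the parity of $n$ straight, and making sure the boundary case $n=k-1$ is slotted into branch two rather than branch one or three — which is why the lemma phrases that branch in terms of $x=p^{k-1}x'$ with $(x',p)=1$ rather than a divisibility condition. I would also remark, either here or in passing, that the degenerate case $x\equiv 0$, i.e.\ $x$ divisible by $p^k$, is subsumed by branch (a) with $n\ge k$. With the three cases above the stated formula follows immediately, completing the proof.
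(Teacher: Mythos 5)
Your case analysis is correct and is exactly the argument the paper intends: the paper states this lemma as "a simple consequence of the previous one" and omits the proof, and your three-way split on $n$ versus $k-1$ and $k$, using the closed form from Lemma~\ref{lem:rx} for both $r_{p^k}(x)$ and $r_{p^{k-1}}(x)$, fills in that omission correctly (including the boundary case $n=k-1$ and the degenerate case $x=0$).
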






\subsection{The Core of the Low Pass Estimate}

We quantify the fact that $H(q,x)$ is never more than $\sqrt q$, and can be large for only a few values of $x$.  
Lemmas, one for $ q$ odd and one for $ q$ even are stated here. 
\begin{lemma}\label{lem:odd_q_H_non_zero}
If $q$ is odd. Let $q=p_1^{k_1}\cdots p_m^{k_m}$ be its prime factorization.
Let 
\begin{align}\label{def:Doq}
\mathcal{D}_o(q):=\bigcap_{j=1}^m \bigl\{x\in \Z:\, \textup{ either ($ k_j $  is even and $p_j^{k_j}\mid x$),  or  ($ p_j^{k_j-1}\mid x $  but $ p_j^{k_j}\nmid x$)}  \bigr\}.
\end{align} 
Then we have 
\begin{align*}
|H(q,x)|\leq 
\begin{cases}
0\qquad\qquad\qquad\qquad\qquad \text{ if } x\notin \mathcal{D}_o(q)\\
\prod_{j=1}^m p_j^{\lfloor k_j/2\rfloor }\qquad\qquad\,\,\,\,\,\,\,\, \text{ if } x\in \mathcal{D}_o(q)
\end{cases}
\end{align*}
\end{lemma}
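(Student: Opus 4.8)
The plan is to reduce the claim about $H(q,x)$ for odd $q$ to the counting function $r_q$ via the identities already established, and then exploit multiplicativity over the prime factorization of $q$.

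\textbf{Step 1: Reduce to $r_q$.}
By Lemma~\ref{lem:H=H1}, for odd $q\geq 3$ we have $H(q,x)=H_1(q,x)$, and I want to compare $H_1(q,x)$ to $H_0(q,x)=r_q(-x)$ (Lemma~\ref{lem:H_0}). The difference between $H_1$ and $H_0$ is just the inclusion/exclusion of the non-coprime terms: $H_0(q,x)=\sum_{a=0}^{q-1}G(a,q)e(ax/q)$ ranges over all $a$, while $H_1(q,x)$ restricts to $(a,q)=1$. For $q$ a prime power $p^k$, a non-coprime $a$ is $a=p a'$, and \eqref{eq:Gauss_G_not_coprime} gives $G(pa',p^k)=G(a',p^{k-1})$, so $H_0(p^k,x)-H_1(p^k,x)$ collapses to a sum that reconstructs $H_0(p^{k-1},x)$ up to relabeling $x$. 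Iterating, one expresses $H(p^k,x)$ as a telescoping alternating combination $r_{p^k}(-x)-r_{p^{k-1}}(-x)$ type object; Lemma~\ref{lem:rx-rx} bounds exactly this. So for a prime power, $|H(p^k,x)|$ is bounded by the right-hand side of Lemma~\ref{lem:rx-rx}: it is $0$ unless $x$ lies in the local condition defining $\mathcal{D}_o$ at $p$, and then it is at most $p^{\lfloor k/2\rfloor}$.

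\textbf{Step 2: Multiplicativity.}
For general odd $q=p_1^{k_1}\cdots p_m^{k_m}$, I use the Chinese Remainder Theorem. Both $G(a,q)$ (as a Gauss sum) and the exponential $e(ax/q)$ factor across the coprime moduli $p_j^{k_j}$: writing $a$ via CRT and correspondingly splitting the residue, $H_1(q,x)=\prod_{j=1}^m H_1(p_j^{k_j},x_j)$ for the appropriate local components $x_j$ (here the Jacobi symbol's multiplicativity and the Gauss sum factorization $G(a,q)=\prod_j G(a\,\overline{(q/p_j^{k_j})},p_j^{k_j})$ do the work). Taking absolute values and applying Step~1 to each factor gives: $|H(q,x)|=0$ unless $x$ satisfies every local condition simultaneously, i.e.\ $x\in\mathcal{D}_o(q)$, and in that case $|H(q,x)|\leq \prod_j p_j^{\lfloor k_j/2\rfloor}$, which is exactly the claimed bound. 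The edge case $q=1$ is handled by $H(1,x)\equiv 0$ from Lemma~\ref{lem:H=H1}, and $\mathcal{D}_o(1)=\Z$, so the bound $|H(1,x)|\le 1$ holds trivially (indeed it is $0$).

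\textbf{Main obstacle.}
The delicate point is Step~1: tracking the precise relationship between $H_1(p^k,x)$ and the differences $r_{p^k}-r_{p^{k-1}}$ through the non-coprime terms. One must be careful that $G(a,q)$ vanishes for $q\equiv 2\pmod 4$ (not an issue here since $q$ is odd, so all $p_j$ are odd and $G(a,p^k)=\varepsilon_{p^k}(p^k)^{-1/2}\left(\tfrac{a}{p^k}\right)$ is clean), and that the telescoping sign pattern in the inclusion–exclusion matches the cases of Lemma~\ref{lem:rx-rx} rather than producing cancellation that would only help. Since we only need an \emph{upper} bound on $|H(q,x)|$, any such cancellation is harmless, so in fact it suffices to bound $|H_1(p^k,x)|\leq |r_{p^k}(-x)|+|r_{p^{k-1}}(-x)|+\cdots$ crudely and then observe the support constraint; but to get the sharp $\prod p_j^{\lfloor k_j/2\rfloor}$ one wants the exact telescoping, which is where Lemma~\ref{lem:rx-rx} is tailored to fit. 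The multiplicativity in Step~2 is then routine bookkeeping with CRT.
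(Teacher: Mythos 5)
Your proposal is correct and follows essentially the same route as the paper: reduce to $H_1$ via Lemma~\ref{lem:H=H1}, factor over prime powers using multiplicativity of $|H_1|$ in $q$ (the paper's Lemma~\ref{lem:H_multi} keeps the \emph{same} $x$ in each factor rather than passing to CRT components, but since the divisibility conditions in $\mathcal{D}_o(q)$ are unit-invariant mod $p_j^{k_j}$ this is only bookkeeping), and identify $H_1(p^k,x)=r_{p^k}(-x)-r_{p^{k-1}}(-x)$, which Lemma~\ref{lem:rx-rx} (together with \eqref{eq:rx_<1} for $k=1$) bounds exactly as claimed. One small tidy-up: the non-coprime part of $H_0(p^k,x)$ equals $H_0(p^{k-1},x)$ \emph{exactly}, with the same $x$ and in a single step --- since $e(pa'x/p^k)=e(a'x/p^{k-1})$ --- so no iteration or relabeling is needed.
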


\begin{lemma}\label{lem:even_q_H_non_zero}
If $q$ is even. Let $q=2^b p_1^{k_1}\cdots p_m^{k_m}$ be its prime factorization. Let
\begin{align}\label{def:Deq}
\begin{split}
\mathcal{D}_e(q)&=
\{x\in \Z:\ 2^{\max(b-2,0)} \mid x\} 
\\ & \qquad 
\cap \bigcap_{j=1}^m \bigl\{x\in \Z:\, \textup{ either }  (\textup{$k_j$ is even and $p_j^{k_j}\mid x$}),\ \textup{ or }  (\textup{$p_j^{k_j-1}\mid x  $  but 
$ p_j^{k_j}\nmid x$}) \bigr\}.
\end{split}
\end{align}
Then we have 
\begin{align*}
|H(q,x)|\leq 
\begin{cases}
0\qquad\qquad\qquad\qquad\qquad \text{ if } x\notin \mathcal{D}_e(q)\\
2^{\frac{b}{2}} \prod_{j=1}^m p_j^{\lfloor k_j/2\rfloor}\qquad\qquad\, \text{ if } x\in \mathcal{D}_e(q)
\end{cases}
\end{align*}
\end{lemma}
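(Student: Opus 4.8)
The plan is to prove the even-$q$ estimate (Lemma~\ref{lem:even_q_H_non_zero}) by reducing it to the odd-$q$ case (Lemma~\ref{lem:odd_q_H_non_zero}) together with an explicit computation of the $2$-adic factor, exploiting multiplicativity of the normalized Gauss sums. First I would record the multiplicative structure: if $q = 2^b q'$ with $q'$ odd, then for $(a,q)=1$ one can split $a$ via the Chinese Remainder Theorem and use $G_0(a,q) = G(a,2q) = G(a_1, 2^{b+1})\, G(a_2, q')$ for suitable coprime residues $a_1, a_2$ (with the appropriate adjustment when $b=0$, where $2q = 2q'$ and the $2$-part is just $G(\cdot,2)$). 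Correspondingly $e(ax/2q)$ factors, so $H(q,x)$ itself factors as a product
\begin{align*}
H(2^b q', x) = H^{(2)}_b(x)\cdot H_1(q', x)
\end{align*}
where $H^{(2)}_b(x)$ is the $2$-adic analogue $\sum_{a \text{ odd, mod } 2^{b+1}} G(a, 2^{b+1}) e(ax/2^{b+1})$ and $H_1(q',x)$ is the odd factor already analyzed (via Lemma~\ref{lem:H=H1} it equals $H(q',x)$ for $q'\geq 3$, and equals $1$ for $q'=1$). The point is that $|H(q,x)| = |H^{(2)}_b(x)|\cdot |H(q',x)|$, and the odd factor is controlled by Lemma~\ref{lem:odd_q_H_non_zero}: it vanishes unless $x$ lies in the intersection of the local conditions at the odd primes, in which case it is at most $\prod_j p_j^{\lfloor k_j/2\rfloor}$.

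Next I would analyze the $2$-adic factor $H^{(2)}_b(x)$ directly using the explicit Gauss-sum formula \eqref{eq:Gauss_G}: for $4 \mid 2^{b+1}$, i.e.\ $b \geq 1$, and $a$ odd, $G(a, 2^{b+1}) = (1+i)\varepsilon_a^{-1} 2^{-(b+1)/2}\left(\tfrac{2^{b+1}}{a}\right)$, which has modulus $\sqrt{2}\cdot 2^{-(b+1)/2} = 2^{-b/2}$. So $H^{(2)}_b(x)$ is $2^{-b/2}$ times a character sum in $a$ over odd residues mod $2^{b+1}$; to bound $|H^{(2)}_b(x)|$ I would follow the same pattern as the odd-prime analysis, interpreting the $a$-sum as (essentially) a count of square roots of $-x$ modulo a power of $2$, i.e.\ by the analogue of Lemmas~\ref{lem:H_0} and~\ref{lem:rx} at the prime $2$. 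This is where the divisibility condition $2^{\max(b-2,0)} \mid x$ in the definition of $\mathcal{D}_e(q)$ comes from: squares mod $2^{b+1}$ (for the relevant range) are exactly those $x$ with $2^{b-2}\mid x$ plus a residue condition, and one gets $|H^{(2)}_b(x)| \leq 2^{b/2}$ when $x$ satisfies that condition and $0$ otherwise. (For $b=0$, $G(a,2)=0$ forces a separate but trivial check; one should confirm the stated bound degenerates correctly, with $2^{\max(b-2,0)}=1$ imposing no condition and $2^{b/2}=1$.) Multiplying the two factors gives exactly $|H(q,x)| \leq 2^{b/2}\prod_j p_j^{\lfloor k_j/2\rfloor}$ on $\mathcal{D}_e(q)$ and $0$ off it.

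I expect the main obstacle to be the $2$-adic square-root count: the prime $2$ is genuinely exceptional for quadratic residues (the group of squares in $(\Z/2^k\Z)^\times$ has index $4$ rather than $2$ for $k\geq 3$), so the clean statement of Lemma~\ref{lem:rx} does not transfer verbatim, and one must carefully track which $x$ (depending on the $2$-adic valuation of $x$ and the residue of its odd part mod $8$) admit square roots mod $2^{b+1}$, and how many. Getting the truncation of the valuation right — that is, why it is $2^{\max(b-2,0)}$ and not $2^b$ or $2^{b-1}$ — and handling the small-$b$ edge cases ($b=0,1,2$) carefully, is the delicate bookkeeping. The modulus of the Gauss sum and the factorization via CRT are routine; the arithmetic of squares mod powers of $2$ is the part that needs care, but it is entirely elementary and parallel to the computation already carried out for odd primes in Lemma~\ref{lem:rx}.
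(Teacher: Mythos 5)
Your plan is sound and would yield the lemma, but it is organized quite differently from the paper's argument, so a comparison is worthwhile. You propose a genuine local factorization $\lvert H(2^bq',x)\rvert=\lvert H^{(2)}_b(\cdot)\rvert\cdot\lvert H_1(q',\cdot)\rvert$ via CRT (extending the appendix multiplicativity lemma to a $2$-power times an odd modulus), followed by a direct $2$-adic square-root count, i.e.\ the analogues of Lemmas~\ref{lem:H_0}, \ref{lem:rx} and \ref{lem:rx-rx} at the prime $2$, where squares have index $4$ in $(\Z/2^k\Z)^\times$ and the residue condition is mod $8$. The paper deliberately avoids both of these steps: it first forms $\tilde H(q,x)=\sum_{j=0}^7H_j(q,x)$, in which the sum runs over \emph{all} $a$ with $(a,q')=1$ rather than only odd $a$; writing $a=\ell q'+h$ then makes the $2$-part a pure geometric sum, so $\tilde H(q,x)$ vanishes unless $2^{b+1}\mid x$ and otherwise equals $\varepsilon_{q'}^{-1}2^{b/2+1}H(q',x')$, reducing everything to the already-proved odd case. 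The restriction to odd $a$ (and to residue classes mod $8$, needed because of the $e(a/8)$ factor and the sign $(-1)^{b}$ in \eqref{eq:H_in_H1357}) is then recovered by taking linear combinations of $\tilde H$ at the shifted arguments $x+\ell q/2$ and $x+(2\ell+1)q/4$; the condition $2^{\max(b-2,0)}\mid x$ emerges as the common refinement of the translates $\tilde{\mathcal D}_e(q)-\ell q/4$ of the set $\{2^{b+1}\mid x\}$. Your route buys a more systematic ``Euler product'' picture and derives the $2$-adic valuation cutoff directly from the structure of squares mod $2^k$; the paper's route buys the avoidance of any $2$-adic quadratic-residue bookkeeping at the cost of the shift identities. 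Two points in your plan need care but are not gaps: (i) the CRT factorization only holds with unit twists in the second arguments (e.g.\ $H^{(2)}_b(\overline{q'}x)$), which is harmless here because the support conditions are divisibility conditions and only absolute values are bounded; and (ii) since the lemma assumes $q$ even, $b\geq1$ throughout, so the only genuine edge cases are $b=1,2$, where $2^{\max(b-2,0)}=1$ imposes no condition.
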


These two lemmas imply the following, where we combine the cases of $ q$ odd and even. The first lemma treats $ x\neq 0$, the second $ x=0$.  
\begin{lemma}\label{lem:fix_x_vary_q}
Let $x=2^a p_1^{\ell_1}\cdots p_m^{\ell_m}$ be the prime factorization of $x$.
Let $\{p_j\}_{j=m+1}^{w}$ be the set of all the distinct prime numbers that are contained in $[1,J]$, which are different from $2, p_1,...,p_m$.
Let 
\begin{align}\label{def:Dx}
\mathcal{D}(x):=\{q\in [1, J]:\ &q=2^b p_1^{k_1}\cdots p_m^{k_m} p_{m+1}^{s_{m+1}}\cdots p_w^{s_w}, \,  0\leq b\leq a+2 ,\\
&\qquad 0\leq s_u\leq 1 \text{ for } m+1\leq u\leq w,\ \textup{ and } \mbf{k}_m:=(k_1,...,k_m)\in \mathcal{A}(x)\}, \notag
\end{align}
where 
\begin{align}\label{def:Ax}
\mathcal{A}(x)=\bigl\{\mbf{k}_m\in \Z^{m}:\ &\mbf{p}_m^{\mbf{k}_m}:=\prod_{j=1}^m p_j^{k_j}\leq J, \textup{ furthermore, each  $k_j$ satisfies}:\\ 
&\textup{either }  (\textup{$k_j$  is even and $0\leq k_j\leq \ell_j$} ), \textup{ or } k_j=\ell_j+1 \bigr\} \notag
\end{align} 
We have
\begin{itemize}
\item $H(q,x)=0$ for $q\notin \mathcal{D}(x)$. 
\item For each $q=2^b p_1^{k_1}\cdots p_m^{k_m} p_{m+1}^{s_{m+1}}\cdots p_w^{s_w}\in \mathcal{D}(x)$, 
there holds 
\begin{align}
|H(q,x)|\leq 
2^{\frac{b}{2}} \prod_{j=1}^m p_j^{\lfloor k_j/2\rfloor}.
\end{align}
\end{itemize}
\end{lemma}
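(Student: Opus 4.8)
The plan is to derive \lemref{lem:fix_x_vary_q} as a bookkeeping consequence of the two preceding lemmas, \lemref{lem:odd_q_H_non_zero} and \lemref{lem:even_q_H_non_zero}, by matching the support condition $x \in \mathcal{D}_o(q)$ (resp. $\mathcal{D}_e(q)$) against the prime factorization of a \emph{fixed} $x$ and reading off which $q$ survive. Fix $x = 2^a p_1^{\ell_1}\cdots p_m^{\ell_m}$ and write an arbitrary $q \in [1,J]$ in terms of its prime factorization. The primes dividing $q$ split into three groups: the prime $2$, the primes $p_1,\dots,p_m$ that also divide $x$, and the remaining primes in $[1,J]$, which we enumerate as $p_{m+1},\dots,p_w$. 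First I would dispose of the case $x=0$ separately, since then $p_j^{k_j}\mid x$ automatically for every $j$, so $\mathcal{A}(0)$ and $\mathcal{D}(0)$ collapse to the statement that $H(q,0)$ is controlled by the stated product for all $q\in[1,J]$ (with $b$ unrestricted below $\log_2 J$); this is exactly what \lemref{lem:odd_q_H_non_zero} and \lemref{lem:even_q_H_non_zero} give with every divisibility clause satisfied. So the substance is the case $x \neq 0$.

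For $x \neq 0$, the key step is to translate, prime by prime, the local condition appearing in the definition of $\mathcal{D}_o(q)$ or $\mathcal{D}_e(q)$ into a constraint on the exponent $k_j$ of $p_j$ in $q$. For a prime $p_j$ with $j \leq m$ (so $p_j^{\ell_j} \| x$ with $\ell_j \geq 1$ finite): the clause ``$k_j$ even and $p_j^{k_j}\mid x$'' becomes ``$k_j$ even and $0 \le k_j \le \ell_j$'', while the clause ``$p_j^{k_j-1}\mid x$ but $p_j^{k_j}\nmid x$'' becomes ``$k_j - 1 \le \ell_j < k_j$'', i.e. $k_j = \ell_j + 1$. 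Together these are precisely the two alternatives defining $\mathcal{A}(x)$ in \eqref{def:Ax}. For a prime $p_u$ with $u > m$, so $p_u \nmid x$ (i.e. the "$\ell_u = 0$" case): if $s_u := k_u \geq 1$, then for the even-clause we would need $s_u$ even and $p_u^{s_u} \mid x$, impossible since $p_u \nmid x$; so only the odd-clause can hold, which forces $s_u - 1 \le 0 < s_u$, i.e. $s_u = 1$. Hence $0 \le s_u \le 1$, matching \eqref{def:Dx}. For the prime $2$: \lemref{lem:odd_q_H_non_zero} handles $q$ odd (no constraint from $2$), and \lemref{lem:even_q_H_non_zero} handles $q$ even, where the condition is $2^{\max(b-2,0)} \mid x$, i.e. $\max(b-2,0) \le a$, i.e. $b \le a+2$; combining, $0 \le b \le a+2$ covers both cases. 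Assembling the three groups of constraints gives exactly the set $\mathcal{D}(x)$, and any $q \notin \mathcal{D}(x)$ violates one of them, so $x \notin \mathcal{D}_o(q)$ or $x \notin \mathcal{D}_e(q)$, whence $H(q,x) = 0$ by the two lemmas.

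For the size bound on $q \in \mathcal{D}(x)$, I would simply apply \lemref{lem:odd_q_H_non_zero} when $q$ is odd and \lemref{lem:even_q_H_non_zero} when $q$ is even: the stated bound is $2^{b/2}\prod_{j=1}^m p_j^{\lfloor k_j/2\rfloor}$ times $\prod_{u=m+1}^{w} p_u^{\lfloor s_u/2\rfloor}$, but since each $s_u \in \{0,1\}$ we have $\lfloor s_u/2 \rfloor = 0$, so those extra factors are all $1$ and drop out — leaving exactly $2^{b/2}\prod_{j=1}^m p_j^{\lfloor k_j/2\rfloor}$ as claimed (with the convention $2^{b/2}=1$ when $q$ is odd, $b=0$). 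That completes the reduction.

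I do not expect a serious obstacle here: the lemma is purely combinatorial repackaging, and the only place demanding care is the \emph{direction} of the logical equivalences in the prime-by-prime translation — in particular correctly handling the boundary exponent $k_j = \ell_j + 1$ (which arises only from the ``odd'' alternative, never from the ``even'' one) and the degenerate prime $2$ contribution $\max(b-2,0)$. One should also be slightly careful that primes dividing $q$ but exceeding $J$ cannot occur, so the enumeration $p_{m+1},\dots,p_w$ of primes in $[1,J]$ is exhaustive for the ``new'' primes, which is why the constraint $\mathbf{p}_m^{\mathbf{k}_m} \le J$ (and $q \le J$ overall) appears in \eqref{def:Ax} and \eqref{def:Dx}. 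Beyond that, it is a direct substitution into the two previous lemmas.
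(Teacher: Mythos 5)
Your proposal is correct and is essentially the paper's own argument: the paper states this lemma as a direct consequence of Lemmas \ref{lem:odd_q_H_non_zero} and \ref{lem:even_q_H_non_zero}, and your prime-by-prime translation of the conditions defining $\mathcal{D}_o(q)$, $\mathcal{D}_e(q)$ into the exponent constraints defining $\mathcal{A}(x)$ and $\mathcal{D}(x)$ (including $b\le a+2$ from $2^{\max(b-2,0)}\mid x$ and $s_u\in\{0,1\}$ for primes not dividing $x$) is exactly the intended bookkeeping. The size bound likewise transfers verbatim since $\lfloor s_u/2\rfloor=0$.
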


\begin{lemma}\label{lem:fix_x=0}
Let $x=0$. Let $2,p_1,...,p_w$ be all the distinct primes numbers that are contained in $[1,J]$. 
Let 
\begin{align}\label{def:Dx_x=0}
\mathcal{D}(x):=\{q\in [1,J]:\ &q=2^b \prod_{j=1}^w p_j^{k_j}\, \, \text{ where } b\geq 0,  \text{ and } \mbf{k}_w\in \mathcal{A}(x)\},
\end{align}
where 
\begin{align}\label{def:Ax_x=0}
\mathcal{A}(x)=\{\mbf{k}_w\in \Z^{w}:\ \mbf{p}_w^{\mbf{k}_w}\leq J, \text{ and each } k_j \text{ is even}\}
\end{align}
We have 
\begin{itemize}
\item $H(q,x)=0$ for $q\notin \mathcal{D}(x)$. 
\item For each $q=2^b \prod_{j=1}^w p_j^{k_j}\in \mathcal{D}(x)$, we have
\begin{align}
|H(q,x)|\leq 
2^{\frac{b}{2}} \prod_{j=1}^w p_j^{k_j/2}.
\end{align}
\end{itemize}
\end{lemma}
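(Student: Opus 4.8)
The plan is to read off both claims directly from the even‑$q$ structural result, Lemma~\ref{lem:even_q_H_non_zero}, specialized to $x=0$. First I would observe that if $q$ has an odd prime factor $p$ appearing to an odd power $k$, then $0\notin \mathcal D_e(q)$: the relevant clause for that prime requires either ($k$ even and $p^k\mid 0$) or ($p^{k-1}\mid 0$ but $p^{k}\nmid 0$), and since $p^k\mid 0$ for every $k$, the only way to satisfy it is with $k$ even. The $2$-adic clause $2^{\max(b-2,0)}\mid 0$ is automatic. Hence $0\in\mathcal D_e(q)$ iff every odd prime in $q$ occurs to an even power, which is exactly the condition $\mbf k_w\in\mathcal A(x)$ in \eqref{def:Ax_x=0} (recall $\mbf k_w$ collects the exponents of the odd primes $p_1,\dots,p_w\le J$, and the $2$-exponent $b$ is unconstrained). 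Since all primes dividing $q\in[1,J]$ lie in $[1,J]$, this shows $H(q,0)=0$ whenever $q\notin\mathcal D(0)$, which is the first bullet. One should also dispense with the odd-$q$ case: for odd $q$ the same reasoning via Lemma~\ref{lem:odd_q_H_non_zero} and $\mathcal D_o(q)$ gives the same conclusion, and odd $q$ are the $b=0$ members of $\mathcal D(0)$.

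For the second bullet, take $q=2^b\prod_{j=1}^w p_j^{k_j}\in\mathcal D(0)$, so each $k_j$ is even. Lemma~\ref{lem:even_q_H_non_zero} gives $|H(q,0)|\le 2^{b/2}\prod_{j=1}^m p_j^{\lfloor k_j/2\rfloor}$, where the product in that lemma runs over the odd primes actually dividing $q$; extending it to all of $p_1,\dots,p_w$ only inserts factors $p_j^0=1$ for the primes not dividing $q$. Since every $k_j$ is even, $\lfloor k_j/2\rfloor=k_j/2$, and the bound becomes $|H(q,0)|\le 2^{b/2}\prod_{j=1}^w p_j^{k_j/2}=\sqrt q$, which is precisely the claimed inequality. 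For odd $q$ one uses Lemma~\ref{lem:odd_q_H_non_zero} in the same way (with $b=0$).

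There is essentially no obstacle here: the content is entirely in Lemmas~\ref{lem:odd_q_H_non_zero} and~\ref{lem:even_q_H_non_zero}, and this lemma is their bookkeeping specialization to $x=0$. The one point requiring a moment's care is the translation between the two indexings — in the structural lemmas the products range over the primes dividing $q$, whereas in the statement of Lemma~\ref{lem:fix_x=0} the primes $p_1,\dots,p_w$ are \emph{all} primes in $[1,J]$, with the exponents $k_j$ allowed to be $0$; one just notes that adding trivial factors $p_j^0$ changes nothing. The other small point is verifying that the definition \eqref{def:Ax_x=0}, demanding each $k_j$ even with no upper bound on $k_j$ beyond $\mbf p_w^{\mbf k_w}\le J$, matches what the clauses of $\mathcal D_o(q)$ and $\mathcal D_e(q)$ impose when $x=0$: since $p^n\mid 0$ for all $n$, the clause "$p^{k-1}\mid x$ but $p^k\nmid x$" can never hold, so only the "$k$ even and $p^k\mid x$" alternative survives — exactly the evenness requirement.
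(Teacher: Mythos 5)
Your proposal is correct and is exactly the argument the paper intends: the paper gives no separate proof of Lemma~\ref{lem:fix_x=0}, stating only that it follows from Lemmas~\ref{lem:odd_q_H_non_zero} and~\ref{lem:even_q_H_non_zero}, and your specialization of the clauses of $\mathcal{D}_o(q)$ and $\mathcal{D}_e(q)$ to $x=0$ (noting that $p^n\mid 0$ for all $n$, so the alternative ``$p^{k-1}\mid x$ but $p^{k}\nmid x$'' never holds and only the evenness condition survives) is precisely that derivation. The bookkeeping points you flag --- padding the product with trivial factors $p_j^{0}$ for primes not dividing $q$, $\lfloor k_j/2\rfloor=k_j/2$ for even $k_j$, and routing odd $q$ through Lemma~\ref{lem:odd_q_H_non_zero} as the $b=0$ case --- are all handled correctly.
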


We will postpone the proofs of Lemmas \ref{lem:odd_q_H_non_zero} and \ref{lem:even_q_H_non_zero}.
We instead finish the proof of Lemma \ref{lem:low_goal}, using Lemmas \ref{lem:fix_x_vary_q} and \ref{lem:fix_x=0}.
Indeed, the case $x=0$ is similar to (indeed, it is easier) the case $x\neq 0$, thus we only present the proof for $x\neq 0$ below.

\begin{proof}[Proof of  Lemma \ref{lem:low_goal}] 
We estimate
\begin{align*}
\sum_{q=1}^J \frac{|H(q,x)|}{q}=\sum_{q\in \mathcal{D}(x)} \frac{|H(q,x)|}{q}.
\end{align*}
Let $\mbf{s}_w:=(s_{m+1},...,s_w)$, $\mbf{p}_{w}^{\mbf{s}_w}:=p_{m+1}^{s_{m+1}}\cdots p_w^{s_w}$.
Let $\mathcal{C}:=\{\mbf{s}_w\in \{0,1\}^{w-m}:\ \mbf{p}_w^{\mbf{s}_w} \leq J\}$.
We have
\begin{align}\label{eq:final1}
\sum_{q\in \mathcal{D}(x)} \frac{|H(q,x)|}{q}
&\leq 
\sum_{b=0}^{a+2}\, 
\sum_{\mbf{k}_m\in \mathcal{A}(x)}\, \sum_{\mbf{s}_w\in \mathcal{C}}\,
\frac{\prod_{j=1}^m p_j^{\lfloor k_j/2\rfloor}}{2^{b/2} \prod_{j=1}^m p_j^{k_j}} \cdot \frac{1}{\mbf{p}_w^{\mbf{s}_w}} \notag\\
&\lesssim \left( \sum_{\mbf{k}_m\in \mathcal{A}(x)}\,
\frac{1}{\prod_{j=1}^m p_j^{k_j-\lfloor k_j/2\rfloor}}\right) \cdot \left(\sum_{\mbf{s}_w\in \mathcal{C}}\frac{1}{\mbf{p}_w^{\mbf{s}_w}}\right).
\end{align}
Note that for distinct $\mbf{s}_w$ and $\mbf{s}_w'$ belonging to $\mathcal{C}$, we have $\mbf{p}_w^{\mbf{s}_w}\neq \mbf{p}_w^{\mbf{s}_w'}$.
Hence
\begin{align}\label{eq:final2}
\sum_{\mbf{s}_w\in \mathcal{C}} \frac{1}{\mbf{p}_w^{\mbf{s}_w}}
\leq \sum_{n=1}^J \frac{1}{n}\lesssim \log J.
\end{align}
Let us also observe that if $k_j'$ and $k_j''$ are two distinct numbers belonging to $$\{k_j\in \Z: \textup{either }  (k_j \textup{ is even and } 0\leq k_j\leq \ell_j ), \textup{ or } k_j=\ell_j+1\},$$ then we have
\begin{align*}
k_j'-\lfloor\frac{k_j'}{2}\rfloor\neq k_j''-\lfloor\frac{k_j''}{2}\rfloor.
\end{align*}
This implies for distinct $\mbf{k}_m$ and $\mbf{k}_m'$ belonging to $\mathcal{A}(x)$, we have $\mbf{p}_m^{\mbf{k}_m}\neq \mbf{p}_m^{\mbf{k}_m'}$.
Hence 
\begin{align}\label{eq:final3}
 \sum_{\mbf{k}_m\in \mathcal{A}(x)}\,
\frac{1}{\prod_{j=1}^m p_j^{k_j-\lfloor k_j/2\rfloor}}\leq \sum_{n=1}^J \frac{1}{n}\lesssim \log J.
\end{align}
Combining \eqref{eq:final1}, \eqref{eq:final2} with \eqref{eq:final3}, we have
\begin{align*}
\sum_{q=1}^J \frac{|H(q,x)|}{q}\lesssim (\log J)^2.
\end{align*}
This proves the claimed result.
\end{proof}

Next, we prove Lemmas \ref{lem:odd_q_H_non_zero} and \ref{lem:even_q_H_non_zero}.
\subsubsection*{Proof of Lemma \ref{lem:odd_q_H_non_zero}}

The following multiplicative property of $H_1$ is proved in  Appendix \ref{app:mul}.
\begin{lemma}\label{lem:H_multi}
Let $q_1, q_2$ be two odd numbers that are coprime. Then we have
\begin{align}
|H_1(q,x)|=&|H_1(q_1,x)|\cdot |H_1(q_2,x)|.
\end{align}
\end{lemma}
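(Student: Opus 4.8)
The plan is to reduce $|H_1(q,x)|$, with $q:=q_1q_2$, to the absolute value of a single exponential sum twisted by a Jacobi symbol, and then factor that sum by the Chinese Remainder Theorem. First I would dispose of the trivial case: if $q_1=1$ or $q_2=1$ the identity is immediate from \lemref{lem:H=H1}, since $H_1(1,x)\equiv 1$. So assume $q_1,q_2\geq 3$; then $q$ is odd and $q\geq 3$. By \eqref{eq:Gauss_G}, for every odd $r\geq 3$ and $(a,r)=1$ one has $G(a,r)=\varepsilon_r r^{-1/2}\bigl(\tfrac{a}{r}\bigr)$, so with
\[
S(r,x):=\sum_{\substack{a=1\\(a,r)=1}}^{r}\Bigl(\tfrac{a}{r}\Bigr)e(ax/r)
\]
we have $H_1(r,x)=\varepsilon_r r^{-1/2}S(r,x)$ and, as $|\varepsilon_r|=1$, $|H_1(r,x)|=r^{-1/2}|S(r,x)|$. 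It therefore suffices to prove $|S(q,x)|=|S(q_1,x)|\,|S(q_2,x)|$, since $q^{-1/2}=q_1^{-1/2}q_2^{-1/2}$.

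Next I would factor $S(q,x)$. Fix integers $m,n$ with $mq_1+nq_2=1$; reducing this relation modulo $q_1$ and modulo $q_2$ gives $(n,q_1)=(m,q_2)=1$ and $\tfrac1{q_1q_2}\equiv\tfrac{n}{q_1}+\tfrac{m}{q_2}\pmod{1}$. Writing $a$ modulo $q$ through the pair $(a_1,a_2)=(a\bmod q_1,\ a\bmod q_2)$ — a bijection of $(\Z/q)^\times$ onto $(\Z/q_1)^\times\times(\Z/q_2)^\times$ — and using both the multiplicativity of the Jacobi symbol in its lower argument, $\bigl(\tfrac{a}{q}\bigr)=\bigl(\tfrac{a_1}{q_1}\bigr)\bigl(\tfrac{a_2}{q_2}\bigr)$, and the additive splitting $e(ax/q)=e(a_1nx/q_1)\,e(a_2mx/q_2)$, the sum factors:
\[
S(q,x)=\Bigl(\sum_{\substack{a_1\bmod q_1\\(a_1,q_1)=1}}\bigl(\tfrac{a_1}{q_1}\bigr)\,e\!\bigl(a_1(nx)/q_1\bigr)\Bigr)\Bigl(\sum_{\substack{a_2\bmod q_2\\(a_2,q_2)=1}}\bigl(\tfrac{a_2}{q_2}\bigr)\,e\!\bigl(a_2(mx)/q_2\bigr)\Bigr).
\]

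Finally I would absorb the twisting integers $n,m$. Since $(n,q_1)=1$, the substitution $a_1\mapsto a_1\bar n$ with $\bar n n\equiv 1\pmod{q_1}$ is a bijection of $(\Z/q_1)^\times$ and turns the first factor into $\bigl(\tfrac{\bar n}{q_1}\bigr)\,S(q_1,x)$, i.e.\ a unimodular scalar times $S(q_1,x)$; likewise the second factor equals $\bigl(\tfrac{\bar m}{q_2}\bigr)\,S(q_2,x)$. Taking absolute values yields $|S(q,x)|=|S(q_1,x)|\,|S(q_2,x)|$, which is the claim. The argument is essentially bookkeeping once the normalisation $H_1(r,x)=\varepsilon_r r^{-1/2}S(r,x)$ is in place; the only points needing attention are verifying that $m,n$ are units modulo $q_2,q_1$ respectively (so the change of variables is legitimate) and the compatibility of Jacobi-symbol multiplicativity with the CRT splitting of the additive character. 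I do not anticipate a genuine obstacle here — the substantive work in this part of the paper is in \lemref{lem:odd_q_H_non_zero} and \lemref{lem:even_q_H_non_zero}, not in this multiplicativity statement.
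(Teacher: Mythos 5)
Your argument is correct and is essentially the paper's proof run in the opposite direction: the paper (Appendix \ref{app:mul}) expands the product $H_1(q_1,x)H_1(q_2,x)$ and reassembles it via the CRT bijection $(a_1,a_2)\mapsto a_1q_2+a_2q_1$ together with the twisted multiplicativity of Gauss sums, while you split $H_1(q_1q_2,x)$ along the same CRT bijection and absorb the Bezout twists by a unit substitution. Both routes rest on exactly the same ingredients --- the evaluation \eqref{eq:Gauss_G} and the complete multiplicativity and periodicity of the Jacobi symbol --- so nothing needs to change.
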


Let $q=p_1^{k_1}\cdots p_m^{k_m}$ be its prime factorization. 
Lemmas \ref{lem:H=H1} and \ref{lem:H_multi} imply
\begin{align*}
|H(q,x)|=|H_1(q,x)|=\prod_{j=1}^m |H_1(p_j^{k_j},x)|.
\end{align*}
It then suffices to compute each $H_1(p_j^{k_j}, x)$. 
In general, let $p$ be an odd prime. 
We have that
\begin{align*}
H_1(p,x)=\sum_{\substack{a=1\\ (a,p)=1}}^p G(a,p)e(ax/p)=\sum_{a=0}^{p-1} G(a,p)e(ax/p)-1=H_0(p,x)-1=r_p(-x)-1,
\end{align*}
where we used Lemma \ref{lem:H_0}.
Hence by Lemma \ref{lem:rx}, we have
\begin{align}\label{eq:H1px}
|H_1(p,x)|\leq 
\begin{cases}
1 \qquad\qquad \text{if } (x,p)=1\\
0 \qquad\qquad \text{if } p\mid x
\end{cases}
\end{align}

For $k\geq 2$, we have
\begin{align*}
H_1(p^k,x)
=&\sum_{\substack{a=1\\ (a,p)=1}}^{p^k} G(a,p^k) e(ax/p^k)\\
=&\sum_{a=0}^{p^k-1} G(a,p^k) e(ax/p^k)-\sum_{a=0}^{p^{k-1}-1}G(a,p^{k-1})e(ax/p^{k-1})\\
=&H_0(p^k,x)-H_0(p^{k-1},x)\\
=&r_{p^k}(-x)-r_{p^{k-1}}(-x),
\end{align*}
where we have used Lemma \ref{lem:H_0} to obtain the last line.
Lemma \ref{lem:rx-rx} then implies
\begin{align}\label{eq:H1pkx}
|H_1(p^k,x)|\leq 
\begin{cases}
p^{\lfloor k/2\rfloor}& 
\textup{if } (\textup{$k$ is even and $p^k \mid x$}),   \textup{or } ( \textup{$ x=p^{k-1} x'$  with $(x',p)=1$}) \\
0&\text{otherwise}
\end{cases}
\end{align}
Here, we have assumed that $k\geq 2$.  But it also holds for $k=1$ by \eqref{eq:H1px}. That is, the inequality above 
 holds for any $k\geq 1$.
Therefore, Lemma \ref{lem:odd_q_H_non_zero} is justified.
\qed

\subsubsection*{Proof of Lemma \ref{lem:even_q_H_non_zero}}
This case requires a separate proof as  complications arise from the summing index $a$ below is in the bottom of the Jacobi symbol.  
Let $q=2^b p_1^{k_1}\cdots p_m^{k_m}=:2^b q'$ be the prime factorization of $q$.
We have
\begin{align*}
H(q,x)
=&\sum_{\substack{a=1\\ (a,q)=1}}^{2q-1} G_0(a,q)e(ax/2q) & \eqref{def:H}\\
=&\sum_{\substack{a=1\\ (a,q)=1}}^{2q-1} q^{-1/2} \left(\frac{2^b q'}{a}\right) e(a/8) e(ax/2q)  & \eqref{eq:Gauss_G0}\\
=&\sum_{\substack{a=1\\ (a,q)=1}}^{2q-1} q^{-1/2} \left(\frac{2}{a}\right)^b \left(\frac{q'}{a}\right) e(a/8) e(ax/2q)\\
=&\sum_{\substack{a=1\\ (a,q')=1 \\ a \text{ odd}}}^{2q-1} q^{-1/2} (-1)^{\frac{(a^2-1)b}{8}} \left(\frac{a}{q'}\right) (-1)^{\frac{(a-1)(q'-1)}{4}} e(a/8) e(ax/2q)
\end{align*}
Here, we have used the multiplicative property of the Jacobi symbol, and quadratic reciprocity. 
Let 
\begin{align*}
H_j(q,x):=\sum_{\substack{a=1,\\ (a,q')=1\\ a\equiv j \text{ (mod} 8 \text{)}}}^{2q-1} q^{-1/2} \left(\frac{a}{q'}\right) e(ax/2q)
\end{align*}
With these notations, we can write
\begin{align}\label{eq:H_in_H1357}
H(q,x)
=&e(1/8) H_1(q,x)+(-1)^{b+\frac{q'-1}{2}} e(3/8) H_3(q,x)+(-1)^b e(5/8) H_5(q,x)+(-1)^{\frac{q'-1}{2}} e(7/8) H_7(q,x) \notag\\
=&\begin{cases}
e(1/8) (H_1(q,x)-H_5(q,x))+(-1)^{\frac{q'-1}{2}}e(3/8) (H_3(q,x)-H_7(q,x))\qquad\qquad \text{ if } b \text{ is even}\\
\\
e(1/8) (H_1(q,x)+H_5(q,x))-(-1)^{\frac{q'-1}{2}}e(3/8) (H_3(q,x)+H_7(q,x))\qquad\qquad \text{ if } b \text{ is odd}
\end{cases}
\end{align}
It remains to examine the four terms of $H_1(q,x)\pm H_5(q,x)$  and $H_3(q,x)\pm H_7(q,x)$.
They in turn will be obtained as certain linear combinations of the function 
\begin{align}\label{def:tildeHqx}
\tilde{H}(q,x):=\sum_{j=0}^7 H_j(q,x)=\sum_{\substack{a=1\\ (a,q')=1}}^{2q-1} q^{-1/2} \left(\frac{a}{q'}\right) e(ax/2q).
\end{align}
We prove the following.
\begin{lemma}\label{lem:Htilde_qx} Let $ q=2^b p_1^{k_1}\cdots p_m^{k_m}$ have the same factorization as in Lemma~\ref{lem:even_q_H_non_zero}.  
Let $\tilde{\mathcal{D}}_e(q)$ be defined as
\begin{align}\label{def:tilde_Deq}
\tilde{\mathcal{D}}_e(q)&:=\{x\in \Z:\ 2^{b+1} \mid x\} 
\\
&\qquad \cap \bigcap_{j=1}^m  \bigl\{x\in \Z:\, \textup{ either ($k_j $ is even and $p_j^{k_j}\mid x$) 
 or ($ p_j^{k_j-1}\mid x $  but $p_j^{k_j}\nmid x$)} \bigr\}.
\end{align}
We have
\begin{align*}
|\tilde{H}(q,x)|\leq
\begin{cases}
0\qquad\qquad\qquad\qquad\,\,\,\,\, \text{if } x\notin \tilde{\mathcal{D}}_e(q)\\
2^{\frac{b}{2}+1} \prod_{j=1}^m p_j^{\lfloor k_j/2\rfloor}  \qquad \text{if } x\in \tilde{\mathcal{D}}_e(q)
\end{cases}
\end{align*}
\end{lemma}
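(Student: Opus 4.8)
The plan is to use the Chinese Remainder Theorem to factor $\tilde H(q,x)$ into local pieces, one for each prime power dividing $2q = 2^{b+1}p_1^{k_1}\cdots p_m^{k_m}$, and then evaluate each piece explicitly. Writing $a$ through the isomorphism $\Z/2q\Z \cong \Z/2^{b+1}\Z \times \prod_{j=1}^m \Z/p_j^{k_j}\Z$, the Jacobi symbol splits multiplicatively, $\left(\frac{a}{q'}\right) = \prod_{j=1}^m \left(\frac{a}{p_j}\right)^{k_j}$, with the $j$-th factor depending only on $a \bmod p_j$; and the additive character $a\mapsto e(ax/2q)$ decomposes as a product of additive characters on the CRT components, each of the form $a\mapsto e(a\,c\,x/p^{\bullet})$ for a unit $c$ modulo the relevant prime power. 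Hence
\begin{equation*}
\tilde H(q,x) = q^{-1/2}\,T_0(x)\prod_{j=1}^m S_j(x), \qquad\text{where } q^{-1/2} = 2^{-b/2}\prod_{j=1}^m p_j^{-k_j/2},
\end{equation*}
$T_0(x)$ being a complete additive character sum over $\Z/2^{b+1}\Z$ and $S_j(x) = \sum_{a\bmod p_j^{k_j}}^{\ast}\left(\frac{a}{p_j}\right)^{k_j} e(a c_j x/p_j^{k_j})$ for a unit $c_j$. The coprimality constraint $(a,q')=1$ imposes nothing on the $2$-adic component, so $a$ there runs over a full residue system modulo $2^{b+1}$; the degenerate case $q'=1$, $q=2^b$, is checked directly.

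The next step is to evaluate the factors. Since $c$ is a unit, $T_0(x) = \sum_{a\bmod 2^{b+1}}e(acx/2^{b+1})$ equals $2^{b+1}$ if $2^{b+1}\mid x$ and $0$ otherwise; this is the source of the condition $2^{b+1}\mid x$ in $\tilde{\mathcal D}_e(q)$, and together with the factor $2^{-b/2}$ it contributes exactly $2^{b/2+1}$. For $S_j(x)$, after the substitution $a\mapsto c_j^{-1}a$ (which only multiplies by $\left(\frac{c_j^{-1}}{p_j}\right)^{k_j}$, of modulus $1$), there are two cases. If $k_j$ is even, then $\left(\frac{a}{p_j}\right)^{k_j}\equiv 1$ on units and $S_j(x)$ is a Ramanujan sum over $\Z/p_j^{k_j}\Z$, which by inclusion–exclusion vanishes unless $p_j^{k_j-1}\mid x$ and satisfies $|S_j(x)|\le p_j^{k_j}-p_j^{k_j-1}$, so $p_j^{-k_j/2}|S_j(x)|\le p_j^{k_j/2}=p_j^{\lfloor k_j/2\rfloor}$. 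If $k_j$ is odd, then $\left(\frac{a}{p_j}\right)^{k_j}=\left(\frac{a}{p_j}\right)$ and the splitting $a = a_0 + p_j^{k_j-1}t$, with $a_0$ modulo $p_j^{k_j-1}$ and $t$ modulo $p_j$, yields (for $k_j\ge 2$) the descent $S_j^{(k)}(x) = p_j\,S_j^{(k-1)}(x/p_j)$ when $p_j\mid x$ and $0$ otherwise, terminating in the classical quadratic Gauss sum $S_j^{(1)}(y)=\sum_{a\bmod p_j}^{\ast}\left(\frac{a}{p_j}\right)e(ay/p_j)$, which is $0$ for $p_j\mid y$ and has modulus $\sqrt{p_j}$ for $(y,p_j)=1$; hence $S_j(x)$ vanishes unless $p_j^{k_j-1}\,\|\,x$, and then $p_j^{-k_j/2}|S_j(x)| = p_j^{-k_j/2}\,p_j^{k_j-1}\sqrt{p_j} = p_j^{(k_j-1)/2}=p_j^{\lfloor k_j/2\rfloor}$.

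Finally, multiplying the normalized bounds gives $|\tilde H(q,x)|\le 2^{b/2+1}\prod_{j=1}^m p_j^{\lfloor k_j/2\rfloor}$, while $\tilde H(q,x)\ne 0$ forces $T_0(x)\ne 0$ and $S_j(x)\ne 0$ for every $j$, i.e.\ $2^{b+1}\mid x$ and, for each $j$, either $k_j$ even with $p_j^{k_j-1}\mid x$ (equivalently ``$p_j^{k_j}\mid x$, or $p_j^{k_j-1}\mid x$ but $p_j^{k_j}\nmid x$'') or $k_j$ odd with $p_j^{k_j-1}\,\|\,x$ --- exactly the membership $x\in\tilde{\mathcal D}_e(q)$. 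I expect the odd-$k_j$ case to be the main obstacle: setting up the descent $(k,x)\mapsto(k-1,x/p_j)$ cleanly and pinning down the exact modulus $\sqrt{p_j}$ of the base Gauss sum, while also carefully tracking the unit factors thrown off by the CRT splitting of the additive character --- harmless for the size bound, since each has modulus one, but needed to get the support set $\tilde{\mathcal D}_e(q)$ precisely right.
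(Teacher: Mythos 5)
Your proposal is correct, and its treatment of the $2$-adic part coincides with the paper's: both isolate the sum over $a$ modulo $2^{b+1}$ as a complete geometric series, yielding the condition $2^{b+1}\mid x$ and, after the normalization $q^{-1/2}=2^{-b/2}(q')^{-1/2}$, the factor $2^{b/2+1}$. Where you diverge is the odd part. The paper, after pulling out the power of two and writing $x=2^{b+1}x'$, recognizes the remaining sum as $\varepsilon_{q'}^{-1}2^{b/2+1}H_1(q',x')=\varepsilon_{q'}^{-1}2^{b/2+1}H(q',x')$ and simply invokes Lemma~\ref{lem:odd_q_H_non_zero}, which itself rests on the multiplicativity of $H_1$ (the CRT argument of Appendix~\ref{app:mul}) together with the identity $H_1(p^k,x)=r_{p^k}(-x)-r_{p^{k-1}}(-x)$ and the Hensel-type count of square roots in Lemmas~\ref{lem:rx} and~\ref{lem:rx-rx}. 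You instead perform the full CRT factorization of the character sum in one step and evaluate each odd local factor directly: a Ramanujan sum when $k_j$ is even, and an iterated (lifted) quadratic Gauss sum when $k_j$ is odd. Up to unit twists and normalization your $S_j$ \emph{is} $H_1(p_j^{k_j},\cdot)$, so the two computations agree; your route is self-contained and trades the square-root-counting lemmas for textbook character-sum identities, at the price of re-deriving the multiplicative structure the paper already has in hand. One point where your care exceeds the paper's: since the sum defining $\tilde H$ runs over $1\le a\le 2q-1$, the residue $a\equiv 0\pmod{2q}$ is omitted, which is harmless precisely when $q'>1$ (it is then excluded by $(a,q')=1$ anyway). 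You rightly flag $q'=1$, i.e.\ $q=2^b$, as a degenerate case to check separately; there the stated vanishing off $\tilde{\mathcal D}_e(q)$ in fact fails by an additive $O(q^{-1/2})$ (the paper's substitution $a=\ell q'+h$ silently assumes $q'>1$), though this is immaterial for the downstream bound on $\sum_q|H(q,x)|/q$.
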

\begin{proof}
We write $a=\ell q'+h$, then we have
\begin{align*}
\tilde{H}(q,x)
&=\sum_{\ell=0}^{2^{b+1}-1} \sum_{\substack{h=1\\ (h,q')=1}}^{q'-1} q^{-1/2} \left(\frac{h}{q'}\right) e((\ell q'+h)x/2q)\\
&=q^{-1/2} \sum_{\substack{h=1\\ (h,q')=1}}^{q'-1}  \left(\frac{h}{q'}\right) e(hx/2q) \sum_{\ell=0}^{2^{b+1}-1} e(\ell x/2^{b+1})
\end{align*}
Clearly, if $2^{b+1}\nmid x$, we simply have
\begin{align}\label{eq:tildeHqx_case1}
\tilde{H}(q,x)=0.
\end{align}
If $2^{b+1}\mid x$, we write $x=2^{b+1} x'$ and we have
\begin{align}\label{eq:tildeHqx_case2}
\tilde{H}(q,x)
=&q^{-1/2} 2^{b+1} \sum_{\substack{h=1\\ (h,q')=1}}^{q'-1}  \left(\frac{h}{q'}\right) e(hx'/q')\notag \\
=&\varepsilon_{q'}^{-1} 2^{\frac{b}{2}+1} \sum_{\substack{h=1\\ (h,q')=1}}^{q'-1}  G(h,q') e(hx'/q')\notag\\
=&\varepsilon_{q'}^{-1} 2^{\frac{b}{2}+1}  H_1(q', x')\notag\\
=&\varepsilon_{q'}^{-1} 2^{\frac{b}{2}+1} H(q',x'),
\end{align}
by Lemma \ref{lem:H=H1}.
Applying Lemma \ref{lem:odd_q_H_non_zero} to $H(q',x')$, and combining \eqref{eq:tildeHqx_case1} with \eqref{eq:tildeHqx_case2}, we finish the proof of Lemma \ref{lem:Htilde_qx}.
\end{proof}

Next, we will use $\tilde{H}(q,x)$ to compute $H(q,x)$.
Shifting $x$ by $q$ in \eqref{def:tildeHqx}, we have
\begin{align}\label{eq:tildeHq_x+q}
\tilde{H}(q,x+q)=\sum_{j=0}^3 H_{2j}(q,x)-\sum_{j=0}^3 H_{2j+1}(q,x),
\end{align}
where we used $H_{2j}(q,x+q)=H_{2j}(q,x)$ and $H_{2j+1}(q,x+q)=-H_{2j+1}(q,x+q)$ for any $0\leq j\leq 3$.
Hence 
\begin{align}\label{eq:sum_H_odd}
\sum_{j=0}^3 H_{2j+1}(q,x)=\frac{1}{2}(\tilde{H}(q,x)-\tilde{H}(q,x+q))
\end{align}

Shifting $x$ by $q/2$ in \eqref{eq:sum_H_odd}, we have
\begin{align}\label{eq:sum_H_odd_4}
\sum_{j=0}^3 e((2j+1)/4) H_{2j+1}(q,x)=\frac{1}{2}(\tilde{H}(q,x+\frac{q}{2})-\tilde{H}(q,x+\frac{3q}{2})),
\end{align}
where we used $H_{2j+1}(q,x+q/2)=e((2j+1)/4) H_{2j+1}(q,x)$ for any $0\leq j\leq 3$.
Combining \eqref{eq:sum_H_odd} with \eqref{eq:sum_H_odd_4}, we have
\begin{align}\label{eq:sum_H_mod4}
\begin{cases}
H_1(q,x)+H_5(q,x)=\frac{1}{4}(\tilde{H}(q,x)-\tilde{H}(q,x+q))+\frac{1}{4i}(\tilde{H}(q,x+\frac{q}{2})-\tilde{H}(q,x+\frac{3q}{2}))\\
\\
H_3(q,x)+H_7(q,x)=\frac{1}{4}(\tilde{H}(q,x)-\tilde{H}(q,x+q))-\frac{1}{4i}(\tilde{H}(q,x+\frac{q}{2})-\tilde{H}(q,x+\frac{3q}{2}))
\end{cases}
\end{align}
For odd $b$, we can already compute $H(q,x)$. Indeed, by \eqref{eq:H_in_H1357}, we have
\begin{align}\label{eq:H_odd_b}
|H(q,x)|
\leq |H_1(q,x)+H_5(q,x)|+|H_3(q,x)+H_7(q,x)| 
\leq \frac{1}{2} \sum_{\ell=0}^3 |\tilde{H}(q,x+\frac{\ell q}{2})|.
\end{align}

For even $b\geq 2$,
shifting $x$ by $q/4$ in \eqref{eq:sum_H_mod4}, we have
\begin{align}\label{eq:sum_H_mod8}
\begin{cases}
H_1(q,x)-H_5(q,x)=\frac{e(-1/8)}{4}(\tilde{H}(q,x+\frac{q}{4})-\tilde{H}(q,x+\frac{5q}{4}))+\frac{e(-1/8)}{4i}(\tilde{H}(q,x+\frac{3q}{4})-\tilde{H}(q,x+\frac{7q}{4}))\\
\\
H_3(q,x)-H_7(q,x)=\frac{e(-3/8)}{4}(\tilde{H}(q,x+\frac{q}{4})-\tilde{H}(q,x+\frac{5q}{4}))-\frac{e(-3/8)}{4i}(\tilde{H}(q,x+\frac{3q}{4})-\tilde{H}(q,x+\frac{7q}{4}))
\end{cases}
\end{align}
where we used $H_{2j+1}(q,x+q/4)=e((2j+1)/8) H_{2j+1}(q,x)$ for any $0\leq j\leq 3$.
One can compute $H(q,x)$ by plugging \eqref{eq:sum_H_mod8} into \eqref{eq:H_in_H1357}, we have
\begin{align}\label{eq:H_even_b}
|H(q,x)|
\leq |H_1(q,x)-H_5(q,x)|+|H_3(q,x)-H_7(q,x)| \leq \frac{1}{2} \sum_{\ell=0}^3 |\tilde{H}(q,x+\frac{(2\ell+1)q}{4})|.
\end{align}
By Lemma \ref{lem:Htilde_qx} and equations \eqref{eq:H_odd_b}, \eqref{eq:H_even_b}, we have
\begin{align*}
\{x\in \Z:\, H(q,x)\neq 0\}\subseteq 
\begin{cases}
\bigcup_{\ell=0}^3 (\tilde{\mathcal{D}}_e(q)-\frac{\ell q}{2})\qquad\qquad\,\,\,\,\,\,\,\,\,\,\,\, \text{if } b \text{ is odd}\\
\bigcup_{\ell=0}^3 (\tilde{\mathcal{D}}_e(q)-\frac{(2\ell+1)q}{4})\qquad\qquad \text{if } b \text{ is even}
\end{cases}
\end{align*}
Note that when $b$ is odd, the sets 
$\{\tilde{\mathcal{D}}_e(q)-\frac{\ell q}{2}\}_{\ell=0}^3$ (and similarly for $\{\tilde{\mathcal{D}}_e(q)-\frac{(2\ell+1)q}{4}\}_{\ell=0}^3$ when $b$ is even) are pairwise disjoint, and their union is contained in $\mathcal{D}_e(q)$, where $\mathcal{D}_e(q)$ is as in \eqref{def:Deq}.
Plugging the upper bounds for $\tilde{H}$ in Lemma \ref{lem:Htilde_qx} into equations \eqref{eq:H_odd_b} and \eqref{eq:H_even_b}, we conclude the proof of Lemma \ref{lem:even_q_H_non_zero}.
\qed

\section{Sparse bounds} \label{s:sparse}

The sparse bounds have been reduced to  Lemma \ref{lem:sparse_key}, which we prove here. 
In the statement of this lemma, recall that $ \mathbf Z$ is convex hull of $ Z_1 = (0,1)$, $ Z_2 = (1,0)$ and $ Z_3 = (2/3, 2/3)$. 
The sparse bounds at points $ (1/p, 1/p')$ correspond to maximal function inequalities, with the point $ Z_1$ being the trivial 
$ \ell ^{\infty }$ to $ \ell ^{\infty }$ bound for the maximal operator $ A$.  The bound for $ \ell ^{p} \to \ell ^{p}$, for $ p $ close to one 
is (a special case of) the arithmetic ergodic theorem of Bourgain \cite{MR1019960}.  
Thus it suffices to show the lemma holds at $(1/p, 1/p)$ for any $p\in (3/2, 2]$. An interpolation argument would enable us to cover all the parameters in the interior of $\mbf{Z}$.

\tikzset{
 treenode/.style = {shape=rectangle, rounded corners,
    draw, align=center,
    top color=white, bottom color=blue!10}}
    
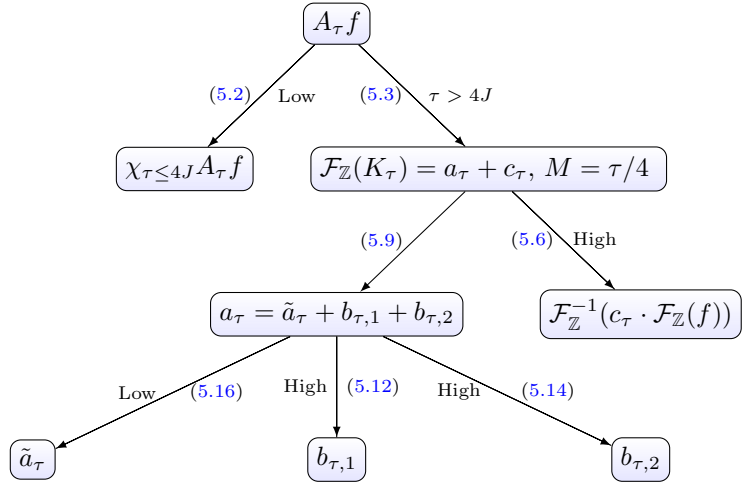
\begin{figure} 
\begin{tikzpicture}[sibling distance=11.5 em, level distance=5.5em, 
    edge from parent/.style = {draw, -latex, font=\scriptsize}]
  \node [treenode] {$ \displaystyle  A _{\tau  } f $}
    child  { node [treenode]{$ \displaystyle  \chi_{ \tau\leq  4J}  A _{\tau   } f  $} 
     edge from parent node [left] {\eqref{e:tauSmall}\ } 
     edge from parent node [right] {\ Low }}
    child { node [treenode]{$  \displaystyle \mathcal{F}_{\Z}(K_ \tau )= a_ \tau  + c_ \tau $,  $M=\tau/4$ } 
      child { node [treenode]{ $\displaystyle a_ \tau  = \tilde a _{\tau } + b _{\tau ,1} + b _{\tau ,2} $}
        child { node [treenode]{ $\displaystyle    \tilde a _{\tau }$} 
        edge from parent node [left] {Low\ \ }
         edge from parent node [right] {\ \eqref{e:tildea<} }} 
         child { node [treenode]{ $\displaystyle    b _{\tau ,1}$} 
         edge from parent node [left] {High} 
         edge from parent node [right] {\eqref{e:b1<}}}
      child { node [treenode]{ $\displaystyle    b _{\tau ,2}$}  
 		edge from parent node [left] {High\ \ } 
 		edge from parent node [right]  {\ \ \eqref{eq:btau2_1}}} 
         edge from parent node [left] {\eqref{e:a1}}}
      child { node [treenode] {$\displaystyle      \mathcal{F}_{\Z}^{-1}(c_{\tau}\cdot \mathcal{F}_{\Z}(f))   $}
       edge from parent node [right] {High} 
       edge from parent node [left] {\eqref{e:c<}}}
       edge from parent node [right] {\ $\tau >4J$}
        edge from parent node [left] {\eqref{e:K}\ }
         };
\end{tikzpicture}
\caption{The flow of the proof of  Lemma \ref{lem:sparse_key}.}  
\label{f:short}
\end{figure}

The situation is then  similar to that of the $\ell^p$-improving part, depending a High Low decomposition. 
Some additional complications  force a more elaborate decomposition, as detailed in  Figure \ref{f:short}.  
We introduce a parameter $J=2^{s_0}\in \{2^k:\, k\in \N\}$. 
We would like to decompose
\begin{align*}
A_\tau f=H_{\tau, J}+L_{\tau, J},
\end{align*}
such that 
\begin{align}\label{eq:HL_sparse}
\begin{cases}
\la H_{\tau, J}\ra_{E,2}\lesssim J^{-1/2} \log J\, \la f\ra_{2E,2}\\
\la L_{\tau, J}\ra_{E,\infty}\lesssim J (\log J)^2\, \la f\ra_{2E,1}
\end{cases}
\end{align}
Once proved,  we can argue as in the proof of the $\ell^p$-improving estimates, and show that for any $p>3/2$ we have
\begin{align*}
(A_{\tau} f, g)\lesssim |E| \la f\ra_{2E,p}\, \la g\ra_{E,p}.
\end{align*}
As we have remarked, this completes the proof of the Lemma.  

\bigskip 

The rest of the section will be devoted to proving \eqref{eq:HL_sparse}.
To this end, we decompose 
\begin{align*}
A_\tau f=\chi_{\tau\leq 4J} A_{\tau} f+\chi_{\tau>4J} A_{\tau} f.
\end{align*}
The part $\chi_{\tau\leq 4J} A_{\tau} f$ will be our first contribution to $L_{\tau,J}$. 
We have
\begin{lemma}\label{lem:tau<J}
The following holds 
\begin{align} \label{e:tauSmall}
 \la \chi_{\tau\leq 4J} A_{\tau} f\ra_{E,\infty}\lesssim J\, \la f\ra_{2E,1}.
\end{align}
\end{lemma}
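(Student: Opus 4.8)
The plan is to reduce \eqref{e:tauSmall} to a pointwise inequality and then exploit admissibility of $\tau$. Fix $x\in E$ and assume $\tau(x)\le 4J$ (otherwise $\chi_{\tau\le 4J}A_\tau f(x)=0$). The integers $x+k^2$, $1\le k\le\tau(x)$, are distinct and all lie in the interval $I_x:=[x+1,\,x+\tau(x)^2]$, which has $|I_x|=\tau(x)^2$; since $f=\chi_F\ge 0$,
\begin{align*}
A_{\tau(x)}f(x)=\frac1{\tau(x)}\sum_{k=1}^{\tau(x)}f(x+k^2)\le\frac1{\tau(x)}\sum_{y\in I_x}f(y)=\tau(x)\,\langle f\rangle_{I_x,1}\le 4J\,\langle f\rangle_{I_x,1}.
\end{align*}
Hence it suffices to show $\langle f\rangle_{I_x,1}\lesssim_C\langle f\rangle_{2E,1}$, with a constant depending only on the admissibility constant $C$.

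For this I will produce an interval $I\subseteq E$ with $x\in I$, $|I|=\tau(x)^2$, and $I_x\subseteq 3I$, and run a contradiction argument. Such an $I$ exists: writing $I=[a,\,a+\tau(x)^2-1]$, the requirements $x\in I$ and $I_x\subseteq 3I$ both amount to $a\in[x-\tau(x)^2+1,\,x]$, while $I\subseteq E$ asks for $a\in[\inf E,\ \sup E-\tau(x)^2+1]$; these ranges overlap because $\tau(x)^2\le\lfloor\sqrt{|E|}\rfloor^2\le|E|$ leaves enough room to slide $I$ off either endpoint of $E$. With such an $I$ fixed, $|3I|=3\tau(x)^2=3|I_x|$, so $\langle f\rangle_{3I,1}\ge\frac{|I_x|}{|3I|}\langle f\rangle_{I_x,1}=\tfrac13\langle f\rangle_{I_x,1}$. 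If $\langle f\rangle_{I_x,1}>3C\langle f\rangle_{2E,1}$, then $\langle f\rangle_{3I,1}>C\langle f\rangle_{2E,1}$, so admissibility of $\tau$ forces $\tau(x)^2=|I|<\min_{y\in I}\tau(y)^2\le\tau(x)^2$, where the last step uses $x\in I$; this is a contradiction. Therefore $\langle f\rangle_{I_x,1}\le 3C\langle f\rangle_{2E,1}$, and combining with the previous display gives $\chi_{\tau\le 4J}A_\tau f(x)\le 12CJ\langle f\rangle_{2E,1}$ for every $x\in E$; taking the supremum over $x$ yields \eqref{e:tauSmall}.

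The argument is short, and I do not anticipate a genuine obstacle. The only point requiring any care is the existence of the comparison interval $I$ sitting inside $E$, which is exactly why the stopping time is capped at $\lfloor\sqrt{|E|}\rfloor$; everything else is the direct translation of ``small $\tau$ forces a short averaging window'' together with the defining property of admissibility, namely that $\tau$ cannot be small on an interval over which $f$ has unusually large average.
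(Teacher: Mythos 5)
Your proof is correct and follows the same route as the paper: bound the sparse sum over squares by the full average over an interval of length $\tau(x)^2$, then use a comparison interval $I\ni x$ with $|I|=\tau(x)^2$ together with the (contrapositive of the) admissibility condition to get $\la f\ra_{3I,1}\lesssim_C\la f\ra_{2E,1}$. You merely spell out the existence of the "good" interval $I$ that the paper's one-line proof takes for granted.
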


\begin{proof}  
By the definition of admissibility, for any $x\in E$, we can find a good interval $I$ such that $x\in I$ and $\tau^2(x)=|I|$,
hence
\begin{align*}
A_{\tau(x)} f(x)\leq \frac{1}{\tau(x)} \sum_{k=1}^{\tau^2(x)} f(x+k)\lesssim \tau(x) \la f\ra_{3I,1}\leq J\la f\ra_{2E,1},
\end{align*}
where we used $\tau(x)\leq 4J$ in the last inequality. Since $I$ is a good interval, we have $\la f\ra_{3I,1}\lesssim \la f\ra_{2E,1}$, this finishes the proof.
\end{proof}

For the part $\chi_{\tau>4J} A_{\tau} f$, we will the decomposition in \eqref{eq:KN=aN+cN} and \eqref{def:aN_gamma_eta}. 
Recall that this is the initial decomposition $ \mathcal{F}_{\Z}(K_N)(\xi)=a_N(\xi)+c_N(\xi)$, where the dependence on $M$ was implicit 
in the notation. In our current situation, we apply \eqref{eq:KN=aN+cN} with $M=N/4=2^{s_1}$.  Then,  
\begin{align}
\mathcal{F}_{\Z}(K_N)(\xi)&=a_N(\xi)+c_N(\xi) \label{e:K}
\\ 
\label{e:aNN}
a_N(\xi) & :=\sum_{s=1}^{s_1} a_{N,s}(\xi), 
\\
\label{e:cN}
\| c_N \|  _ \infty & \lesssim   N ^{-1/2} \log N 
\end{align}
and $a_{N,s}$ is defined in \eqref{def:aN_gamma_eta}. The estimate \eqref{e:cN} follows from Lemma \ref{lem:minor_arc}, 
applied with $M=N/4$.

Our first contribution to the High Pass term $ H _{\tau ,J}$ is 
$\mathcal{F}_{\Z}^{-1}(\chi_{\tau>4J} c_{\tau} \cdot \mathcal{F}_{\Z}(f))$. 
\begin{lemma}\label{lem:ctau}
We have 
\begin{align} \label{e:c<}
\la \mathcal{F}_{\Z}^{-1}(\chi_{\tau>4J} c_{\tau} \cdot \mathcal{F}_{\Z}(f))\ra_{E,2}\lesssim J^{-1/2}\log J \, \la f\ra_{2E,2}.
\end{align}
\end{lemma}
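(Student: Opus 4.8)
The plan is to linearize the stopping-time operator, dominate it pointwise by a dyadic square function, and then invoke Plancherel together with the uniform multiplier bound \eqref{e:cN}. Concretely, for each $x\in E$ the quantity in question is the single term
\[
\mathcal{F}_\Z^{-1}(\chi_{\tau>4J}\,c_\tau\cdot\mathcal{F}_\Z f)(x)=\mathbf 1_{\tau(x)>4J}\cdot\bigl[\mathcal{F}_\Z^{-1}(c_{\tau(x)}\cdot\mathcal{F}_\Z f)\bigr](x),
\]
and since $\tau(x)$ takes values only in the powers of $2$, for each fixed $x$ this is one summand of the dyadic square function over scales $N=2^k>4J$. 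Hence
\[
\bigl|\mathcal{F}_\Z^{-1}(\chi_{\tau>4J}\,c_\tau\cdot\mathcal{F}_\Z f)(x)\bigr|^2\;\le\;\sum_{\substack{N=2^k\\ N>4J}}\bigl|\mathcal{F}_\Z^{-1}(c_N\cdot\mathcal{F}_\Z f)(x)\bigr|^2 .
\]

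Next I would sum over $x\in\Z$, interchange the (now nonnegative) sums, and apply Plancherel on $\T$, obtaining
\[
\sum_{x\in\Z}\bigl|\mathcal{F}_\Z^{-1}(\chi_{\tau>4J}\,c_\tau\cdot\mathcal{F}_\Z f)(x)\bigr|^2\;\le\;\sum_{\substack{N=2^k\\ N>4J}}\|c_N\cdot\mathcal{F}_\Z f\|_{L^2(\T)}^2\;\le\;\Bigl(\sum_{\substack{N=2^k\\ N>4J}}\|c_N\|_{L^\infty(\T)}^2\Bigr)\|f\|_{\ell^2(\Z)}^2 .
\]
By \eqref{e:cN} (that is, \lemref{lem:minor_arc} applied with $M=N/4$) one has $\|c_N\|_{L^\infty(\T)}\lesssim N^{-1/2}\log N$, so the bracketed factor is $\sum_{k:\,2^k>4J}2^{-k}(k\log 2)^2$. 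Because the scales are dyadic, this is a geometric-type series whose tail is comparable to its leading term, giving $\sum_{k:\,2^k>4J}2^{-k}k^2\lesssim J^{-1}(\log J)^2$; consequently $\sum_{x}|\cdots|^2\lesssim J^{-1}(\log J)^2\|f\|_{\ell^2(\Z)}^2$.

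Finally I would pass back to normalized averages: restricting the left-hand sum to $x\in E$ and using that $f=\chi_F$ is supported on $2E$ with $|2E|=2|E|$, so $\|f\|_{\ell^2(\Z)}^2=2|E|\,\la f\ra_{2E,2}^2$, gives
\[
\la\mathcal{F}_\Z^{-1}(\chi_{\tau>4J}\,c_\tau\cdot\mathcal{F}_\Z f)\ra_{E,2}^2\;\le\;\frac{1}{|E|}\sum_{x\in\Z}|\cdots|^2\;\lesssim\;\frac{(\log J)^2}{J\,|E|}\|f\|_{\ell^2(\Z)}^2\;=\;\frac{2(\log J)^2}{J}\,\la f\ra_{2E,2}^2 ,
\]
and taking square roots yields \eqref{e:c<}. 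The argument is largely mechanical; the only points that need care are the pointwise domination by the dyadic square function (valid precisely because $\tau$ is a function, so at each $x$ a single scale is active) and the observation that the resulting sum over scales both converges and converges with the correct gain $J^{-1}(\log J)^2$ — which hinges on the scales being powers of $2$, since the analogous sum over all integers $N$ diverges.
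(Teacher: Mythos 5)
Your argument is correct and is essentially the paper's own proof: both dominate the linearized operator pointwise by the dyadic square function over scales $N>4J$, apply Plancherel with the uniform bound $\|c_N\|_{L^\infty}\lesssim N^{-1/2}\log N$ from \lemref{lem:minor_arc}, and sum the resulting geometric series to obtain the gain $J^{-1}(\log J)^2$. The only difference is cosmetic — you pass directly from the single active scale at each $x$ to the square function, whereas the paper inserts the maximal function over dyadic scales as an intermediate step.
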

\begin{proof}
Note that this is just an $\ell^2$ inequality, and we use a standard square function argument. We have
\begin{align}
\la \mathcal{F}_{\Z}^{-1}(\chi_{\tau>4J} c_{\tau}\cdot \mathcal{F}_{\Z}(f))\ra_{E,2} ^2 
\leq &\la \sup_{\text{dyadic } N>4J} |\mathcal{F}_{\Z}^{-1}(c_N\cdot \mathcal{F}_{\Z}(f))|\ra_{E,2} ^2  \notag\\
\leq & |E|^{-1} \sum_{\text{dyadic } N>J} \|\mathcal{F}_{\Z}^{-1}(c_N \cdot \mathcal{F}_{\Z}(f))\|_{\ell^2}^2  \label{eq:ctau_sq_fn}\\
\lesssim & \sum_{\text{dyadic } N>J} \|c_N\|_{L^{\infty}}^2   \la f\ra_{2E,2} ^2 , \label{eq:ctau_Par}
\end{align}
where we used square function to control the maximal function in \eqref{eq:ctau_sq_fn}, and we used Parseval's identity in \eqref{eq:ctau_Par}.
Applying \eqref{e:cN}, we have  
\begin{align*}
\sum_{\text{dyadic } N>J} \|c_N\|_{L^\infty}^2\lesssim \sum_{k>s_0} 2^{-k} k^2\lesssim J^{-1} (\log J)^2.
\end{align*}
Hence by \eqref{eq:ctau_Par}, we have the desired result. 
\begin{align*}
\la \mathcal{F}_{\Z}^{-1}(\chi_{\tau>4J} c_{\tau} \cdot  \mathcal{F}_{\Z}(f))\ra_{E,2}\lesssim J^{-1/2}\log J \, \la f\ra_{2E,2},
\end{align*}

\end{proof}

Next, we further decompose $a_{\tau}$, as given in \eqref{e:aNN}.   
Let 
\begin{align}\label{def:aNs_12}
\begin{cases}
a_{N,s}^{(1)}(\xi):=\sum_{a/q\in \mathcal{R}_s} G_0(a,q) \eta_{qN^2/J}(2\xi-\frac{a}{q}) \gamma_N(2\xi-\frac{a}{q})\\
a_{N,s}^{(2)}(\xi):=a_{N,s}(\xi)-a_{N,s}^{(1)}(\xi)
\end{cases}
\end{align}
We have, with the notation $ J = 2 ^{s_0}$,  
\begin{align}
\chi_{\tau>4J} a_{\tau}(\xi)=
&\chi_{\tau>4J} \sum_{s=1}^{s_0} a_{\tau,s}^{(1)}(\xi)    & (=:\tilde{a}_{\tau}(\xi))   \label{e:a1}\\
&+\chi_{\tau>4J} \sum_{s=1}^{s_0} a_{\tau,s}^{(2)}(\xi)   & (=:b_{\tau,1}(\xi)) \label{e:b1}\\
&+\chi_{\tau>4J} \sum_{s:\, J<2^s\leq \tau/4} a_{\tau,s}(\xi) &  (=:b_{\tau,2}(\xi)) \label{e:b2}. 
\end{align}
The terms $b_{\tau,1}$ and $b_{\tau,2}$ will be our second and third contributions to the High Pass term  $H_{\tau,J}$. 
The term $ \tilde a_ \tau $ will be a contribution to the Low Pass term.

\begin{lemma}\label{lem:btau1}
For the term $ b _{\tau ,1}$ defined in \eqref{e:b1}, we have 
\begin{align} \label{e:b1<}
\la \mathcal{F}_{\Z}^{-1}( b_{\tau, 1}\cdot  \mathcal{F}_{\Z}(f))\ra_{E,2}
\lesssim J^{-1/2}\log J \, \la f\ra_{2E,2}.
\end{align}
\end{lemma}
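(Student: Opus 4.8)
The plan is to mimic the square-function argument used for Lemma~\ref{lem:ctau}: dominate the stopping-time operator by a maximal operator over dyadic scales $N>4J$, control that maximal operator by the $\ell^2$ square function, and invoke Plancherel. The uniform multiplier bound $\|b_{\tau,1}\|_\infty\lesssim J^{-1/2}\log J$ (which one proves just as in Proposition~\ref{p:bN2}, from the annular support of the cutoff differences and \eqref{eq:gammaN}) is \emph{not} square-summable in $N$, so the crux will be the stronger pointwise estimate
\begin{equation*}
\sum_{\text{dyadic }N>4J}\Bigl|\sum_{s=1}^{s_0}a^{(2)}_{N,s}(\xi)\Bigr|^2\ \lesssim\ \frac{(\log J)^2}{J},\qquad \xi\in\T,
\end{equation*}
where $a^{(2)}_{N,s}=a_{N,s}-a^{(1)}_{N,s}$ is as in \eqref{def:aNs_12}.

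To prove this, fix $\xi$ and $s\in\{1,\dots,s_0\}$. By \eqref{eq:eta_supp_disjoint} there is at most one reduced $a/q\in\mathcal{R}_s$ with $\eta_{2^{2s}}(2\xi-\tfrac aq)\ne 0$; if there is none then $a^{(2)}_{N,s}(\xi)\equiv0$, so assume it exists and put $\delta:=|2\xi-\tfrac aq|\le\tfrac12 2^{-2s}$. Since $q<J$ and $N>4J$, a direct computation gives $qN^2/J>2^{2s}$, so $\eta_{qN^2/J}$ has strictly narrower support than $\eta_{2^{2s}}$; hence $\eta_{2^{2s}}(2\xi-\tfrac aq)-\eta_{qN^2/J}(2\xi-\tfrac aq)$ is bounded by $2\|\eta\|_\infty$ and \emph{vanishes whenever $\delta\le J/(4qN^2)$}. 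Consequently $a^{(2)}_{N,s}(\xi)=0$ unless $N^2>J/(4q\delta)$, and where it is nonzero, combining $|G_0(a,q)|\le q^{-1/2}$ with $|\gamma_N(2\xi-\tfrac aq)|\le N^{-1}\delta^{-1/2}$ from \eqref{eq:gammaN} yields $|a^{(2)}_{N,s}(\xi)|\lesssim q^{-1/2}N^{-1}\delta^{-1/2}$. Summing the squares over the dyadic $N>4J$ then telescopes, since only $N\gtrsim\sqrt{J/(q\delta)}$ contribute and the summand decays like $N^{-1}$:
\begin{equation*}
\sum_{\text{dyadic }N>4J}|a^{(2)}_{N,s}(\xi)|^2\ \lesssim\ q^{-1}\delta^{-1}\sum_{\text{dyadic }N\gtrsim\sqrt{J/(q\delta)}}N^{-2}\ \lesssim\ q^{-1}\delta^{-1}\cdot\frac{q\delta}{J}\ =\ \frac1J.
\end{equation*}
As $s$ runs over $\{1,\dots,s_0\}$ with $s_0\sim\log J$, Cauchy--Schwarz in $s$ gives the claimed bound $\sum_{N>4J}|\sum_{s=1}^{s_0}a^{(2)}_{N,s}(\xi)|^2\lesssim s_0^2/J\lesssim(\log J)^2/J$.

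With the pointwise estimate established, I would finish exactly as in Lemma~\ref{lem:ctau}. For each $x$, the multiplier $\chi_{\tau(x)>4J}\,b_{\tau(x),1}$ coincides with $\sum_{s=1}^{s_0}a^{(2)}_{N,s}$ for some dyadic $N>4J$, so
\begin{equation*}
\la\mathcal{F}_{\Z}^{-1}(b_{\tau,1}\cdot\mathcal{F}_{\Z}f)\ra_{E,2}^2\ \le\ |E|^{-1}\sum_{x\in\Z}\ \sum_{\text{dyadic }N>4J}\Bigl|\mathcal{F}_{\Z}^{-1}\Bigl(\sum_{s=1}^{s_0}a^{(2)}_{N,s}\cdot\mathcal{F}_{\Z}f\Bigr)(x)\Bigr|^2,
\end{equation*}
and Plancherel turns the right-hand side into $|E|^{-1}\int_{\T}\bigl(\sum_{N>4J}|\sum_s a^{(2)}_{N,s}(\xi)|^2\bigr)|\mathcal{F}_{\Z}f(\xi)|^2\,d\xi\lesssim |E|^{-1}(\log J)^2J^{-1}\|f\|_{\ell^2}^2$. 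Since $f$ is supported on $2E$, $\|f\|_{\ell^2}^2\lesssim|E|\,\la f\ra_{2E,2}^2$, and taking square roots gives \eqref{e:b1<}. The main obstacle is exactly the middle step: seeing that the uniform multiplier bound is too weak, and that one must use the inner radius $J/(4qN^2)$ of the annular support to force the scales active at a fixed frequency to be large — which is what makes the $N$-sum of squares converge, at the cost of only a $(\log J)^2$ factor after summing over $s$.
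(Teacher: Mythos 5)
Your proposal is correct and follows essentially the same route as the paper: dominate the stopping-time operator by the dyadic square function, apply Plancherel, and use the fact that $a^{(2)}_{N,s}(\xi)$ lives on the annulus $J/(4qN^2)\le|2\xi-\tfrac aq|\le 2^{-2s-1}$, so that at a fixed frequency only scales $N\gtrsim\sqrt{J/(q\delta)}$ are active and the dyadic sum of squares collapses to $O(1/J)$. The only cosmetic difference is that the paper pulls the $s$-sum out by the triangle inequality before forming the square function, while you keep it inside and use Cauchy--Schwarz in $s$; both yield the same $J^{-1/2}\log J$ bound.
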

\begin{proof}
We apply Parseval's identity and a square function technique.
\begin{align}
\la \mathcal{F}_{\Z}^{-1}( b_{\tau, 1}\cdot  \mathcal{F}_{\Z}(f))\ra_{E,2}
\leq &\sum_{s=1}^{s_0} \la \mathcal{F}_{\Z}^{-1}( \chi_{\tau>4J}\, a_{\tau,s}^{(2)}\cdot  \mathcal{F}_{\Z}(f))\ra_{E,2} \notag\\
\leq &\sum_{s=1}^{s_0} \la \sup_{\text{dyadic } N>J} |\mathcal{F}_{\Z}^{-1}( a_{N,s}^{(2)}\cdot  \mathcal{F}_{\Z}(f))|\ra_{E,2} \notag\\
\lesssim &\sum_{s=1}^{s_0} \Bigl\| \sum_{\text{dyadic } N>J} |a_{N,s}^{(2)}|^2\Bigr\|_{L^\infty}^{\frac{1}{2}}\, \la f\ra_{2E,2} \label{eq:btau1_1}
\end{align}
It remains to estimate $\| \sum_{N>J} |a_{N,s}^{(2)}|^2\|_{L^\infty}$.
For any fixed $\xi$, let $a_0/q_0$ be uniquely determined by $\xi\in \mathrm{supp}(\eta_{2^{2s}}(\cdot-\frac{a_0}{q_0}))$.
Since $q\leq 2^s\leq J<N$, we have
\begin{align*}
\mathrm{supp}(\eta_{2^{2s}}(\cdot)-\eta_{qN^2/J}(\cdot))=[-\frac{1}{2^{2s+1}}, -\frac{J}{4qN^2}]\cup [\frac{J}{4qN^2}, \frac{1}{2^{2s+1}}].
\end{align*}
Let $N_0=2^{k_0}$ be the smallest dyadic number that is greater than $J$ and satisfies 
\begin{align*}
\xi\in \mathrm{supp}(\eta_{2^{2s}}(\cdot-\frac{a_0}{q_0})-\eta_{qN_0^2/J}(\cdot-\frac{a_0}{q_0})),
\end{align*}
thus $|2\xi-\frac{a_0}{q_0}|\geq J/(4qN_0^2)$.
Then for $k\geq k_0$, with $N=2^k$, we have that by \eqref{eq:gammaN},
\begin{align*}
|\gamma_N(2\xi-\frac{a_0}{q_0})|\lesssim N^{-1} \frac{\sqrt{q} N_0}{\sqrt{J}}.
\end{align*}
This implies, using the Gauss sum estimate \eqref{eq:GaussG0_norm},
\begin{align*}
|a_{N,s}^{(2)}(\xi)|\lesssim \frac{N_0}{N \sqrt{J}},
\end{align*}
uniformly in $\xi$.
Plugging the estimate above into \eqref{eq:btau1_1}, we have
\begin{align*}
\la \mathcal{F}_{\Z}^{-1}( b_{\tau, 1} \cdot \mathcal{F}_{\Z}(f))\ra_{E,2}
\lesssim J^{-1/2}\log J \, \la f\ra_{2E,2}.
\end{align*}
This proves Lemma \ref{lem:btau1}.
\end{proof}

\begin{lemma}\label{lem:btau2}
For the term $ b _{\tau ,2}$ defined in \eqref{e:b2}, we have 
\begin{align}\label{eq:btau2_1}
\la \mathcal{F}_{\Z}^{-1}( b_{\tau, 2}\cdot  \mathcal{F}_{\Z}(f))\ra_{E,2}
\lesssim J^{-1/2}\log J \, \la f\ra_{2E,2}.
\end{align}
\end{lemma}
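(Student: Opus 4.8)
Because $\tau$ takes only dyadic values bounded by $\sqrt{|E|}$, the quantity to estimate is dominated pointwise by the dyadic maximal functions of the corresponding multipliers: with $M_s f:=\sup_{N=2^k\ge 4\cdot 2^s}\bigl|\mathcal F_{\Z}^{-1}(a_{N,s}\cdot\mathcal F_{\Z}f)\bigr|$,
\begin{align*}
\bigl|\mathcal F_{\Z}^{-1}(b_{\tau,2}\cdot\mathcal F_{\Z}f)\bigr|
&\le \sup_{N=2^k>4J}\Bigl|\mathcal F_{\Z}^{-1}\Bigl(\sum_{s:\,J<2^s\le N/4}a_{N,s}\cdot\mathcal F_{\Z}f\Bigr)\Bigr|\\
&\le \sum_{s>s_0}M_s f .
\end{align*}
Hence the lemma reduces to the per-scale $\ell^2(\Z)$ estimate
\begin{equation}\label{e:perscale}
\|M_s f\|_{\ell^2(\Z)}\ \lesssim\ s\,2^{-s/2}\,\|f\|_{\ell^2(\Z)},\qquad s>s_0 ,
\end{equation}
since then $\sum_{s>s_0}s\,2^{-s/2}\lesssim s_0 2^{-s_0/2}=J^{-1/2}\log J$, and passing from $\|\cdot\|_{\ell^2(\Z)}$ to $\la\cdot\ra_{E,2}$ and $\la f\ra_{2E,2}$ via $\mathrm{supp}\,f\subset 2E$ gives the claimed bound \eqref{eq:btau2_1}.

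The plan for \eqref{e:perscale} is to isolate the $N$-dependence inside the dilation factor $\gamma_N$ and transfer to a continuous maximal function. Sampling the periodic multiplier $a_{N,s}$ at the integers, exactly as in \eqref{eq:low_1} and the remark following it, yields the kernel identity
\begin{equation*}
\mathcal F_{\Z}^{-1}(a_{N,s})(y)=\Gamma_s(y)\,K_{N,s}(y),
\end{equation*}
where $\Gamma_s(y):=\sum_{a/q\in\mathcal R_s}G_0(a,q)\,e(\tfrac{ay}{2q})$ and $K_{N,s}:=\mathcal F_{\R}^{-1}\bigl(\eta_{2^{2s}}(2\cdot)\,\gamma_N(2\cdot)\bigr)$, so that $\mathcal F_{\Z}^{-1}(a_{N,s}\cdot\mathcal F_{\Z}f)=(\Gamma_s K_{N,s})*f$. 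By \eqref{eq:gammaN_Fourier_inv}, $K_{N,s}$ is the dilation to scale $N^2$ of a fixed $L^1$-normalised averaging profile (mollified at the smaller scale $2^{2s}$), whence $\sup_N|K_{N,s}*g|\lesssim \mathcal Mg$ uniformly in $s$, $\mathcal M$ the Hardy--Littlewood maximal operator; and by the disjointness of Fourier supports in \eqref{eq:eta_supp_disjoint} the fixed-$N$ operator satisfies $\|a_{N,s}\|_{L^\infty}\le\max_{a/q\in\mathcal R_s}|G_0(a,q)|\le 2^{-(s-1)/2}$. Feeding these two inputs into the Magyar--Stein--Wainger transference of maximal estimates \cite{MSW}---the clusters around the points $a/(2q)$, $a/q\in\mathcal R_s$, being pairwise separated by at least their width---yields \eqref{e:perscale}, the factor $2^{-s/2}$ being the Gauss-sum gain and the extra factor $s$ absorbing the logarithmic losses in reassembling the clusters and in the maximal truncation.

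I expect the transference step to be the crux. It cannot be replaced by the crude square-function argument used for $b_{\tau,1}$ in Lemma~\ref{lem:btau1}: near a half-rational $\xi=\tfrac{a}{2q}$ one has $\gamma_N(0)=1$, hence $|a_{N,s}(\tfrac{a}{2q})|\approx q^{-1/2}$ for \emph{every} $N\ge 4\cdot 2^s$, so $\sum_N\|a_{N,s}\|_{L^\infty}^2$ diverges and no pointwise-in-$\xi$ bound on the multiplier can drive a maximal estimate. Concretely I would carry out the transference by splitting $f=\sum_{a/q\in\mathcal R_s}f_{a/q}$ with $\widehat{f_{a/q}}$ supported in the $\eta_{2^{2s}}$-support of $\tfrac{a}{2q}$ (an orthogonal decomposition), writing
\begin{equation*}
\mathcal F_{\Z}^{-1}(a_{N,s}\cdot\mathcal F_{\Z}f)=\sum_{a/q\in\mathcal R_s}G_0(a,q)\,e\bigl(\tfrac{a}{2q}\,\cdot\bigr)\Bigl(K_{N,s}*\bigl(e\bigl(-\tfrac{a}{2q}\,\cdot\bigr)f_{a/q}\bigr)\Bigr),
\end{equation*}
and bounding $\sup_N$ of this sum by combining the $\ell^2(\Z)$-boundedness of $g\mapsto\sup_N|K_{N,s}*g|$ (in its vector-valued form over the index $a/q$), the orthogonality of the pieces $f_{a/q}$, and the uniform bound $|G_0(a,q)|\lesssim 2^{-s/2}$; it is precisely the separation of the frequency clusters that keeps the sum over $a/q$ from costing a power of $2^s$. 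A minor point needing care is that the bumps $\eta_{2^{2s}}(2\cdot-\tfrac{a}{q})$ are only just disjoint, so one may wish to shrink them slightly before applying the transference.
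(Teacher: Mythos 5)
Your reduction is exactly the paper's: bound $|\mathcal F_{\Z}^{-1}(b_{\tau,2}\cdot\mathcal F_{\Z}f)|$ pointwise by $\sum_{s>s_0}\sup_{N\ge 2^s}|\mathcal F_{\Z}^{-1}(a_{N,s}\cdot\mathcal F_{\Z}f)|$, prove the per-scale estimate with constant $s\,2^{-s/2}$, and sum the geometric-times-linear series to get $J^{-1/2}\log J$. The only difference is in how the per-scale bound is justified. The paper simply invokes Bourgain's multi-frequency maximal theorem (stated as Theorem~\ref{thm:BenT33}, with the single-logarithm refinement taken from \cite{2018arXiv180309431K}*{Prop.~5.11}), whereas you sketch a proof of it by Magyar--Stein--Wainger transference: frequency-decompose $f$ into the separated clusters at $a/(2q)$, pull out the Gauss-sum factor $2^{-s/2}$, and control $\sup_N$ of the reassembled sum. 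That sketch is the right mechanism, and your observation that the square-function argument used for $b_{\tau,1}$ must fail here (because $\gamma_N(0)=1$ for every $N$, so $\sum_N\|a_{N,s}\|_\infty^2$ diverges) is precisely the reason the paper treats $b_{\tau,2}$ by a different tool. The one point you should make explicit is that the ``vector-valued maximal bound over the $\approx 2^{2s}$ separated frequencies with only a $\log$ loss'' is not a routine consequence of orthogonality plus the Hardy--Littlewood maximal theorem --- a naive Cauchy--Schwarz over the clusters would cost a power of $2^{s}$ and wipe out the Gauss-sum gain --- but is exactly Bourgain's logarithmic multi-frequency lemma, which is best cited (as the paper does) rather than re-derived; with that citation in place your argument is complete and equivalent to the paper's.
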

\begin{proof}
The proof of this part crucially uses  Bourgain's multi-frequency maximal theorem, one of the main results of \cite{MR1019960}. 
The following is a corollary of that result, and the standard Gauss sum estimate.  
\begin{thm}\label{thm:BenT33}
For any $s\geq 1$, the following inequality holds
\begin{align} \label{e:multiF}
\|\sup_{N\geq 2^s} |\mathcal{F}_{\Z}^{-1} (a_{N,s} \cdot  \mathcal{F}_{\Z}(f))|\|_{\ell^2} \lesssim s2^{-\frac{s}{2}} \|f\|_{\ell^2}.
\end{align}
\end{thm}
This particular implies
\begin{align}\label{eq:btau2_2}
\la \sup_{N\geq 2^s} |\mathcal{F}_{\Z}^{-1} (a_{N,s}\cdot  \mathcal{F}_{\Z}(f)) |\ra_{E,2}\lesssim s2^{-\frac{s}{2}} \la f\ra_{2E,2}.
\end{align}
By triangle inequality, we have
\begin{align*}
\la \mathcal{F}_{\Z}^{-1}( b_{\tau, 2}\cdot  \mathcal{F}_{\Z}(f))\ra_{E,2}
\lesssim \sum_{2^s>J} \la \sup_{N\geq 2^s} |\mathcal{F}_{\Z}^{-1} (a_{N,s}\cdot  \mathcal{F}_{\Z}(f))\ra_{E,2}
\lesssim \sum_{2^s>J} s 2^{-\frac{s}{2}} \la f\ra_{2E,2}
\lesssim J^{-1/2} \log J\, \la f\ra_{2E,2}.
\end{align*}
This proves Lemma \ref{lem:btau2}.
\end{proof}

\begin{remark}\label{r:} The paper of Bourgain \cite{MR1019960} 
proves \eqref{e:multiF} with an estimate of the form $ s ^2 2 ^{- \frac{s}2} \lVert f\rVert _{\ell ^2 }$ on the right. 
That is the logarithmic term $ s$ is squared.  It is known that the estimate above holds.  
See for instance \cite{2018arXiv180309431K}*{Prop. 5.11}.
\end{remark}

Let 
\begin{align}\label{def:HLtau}
\begin{cases}
H_{\tau,J}:=\mathcal{F}_{\Z}^{-1}((\chi_{\tau>4J}c_\tau+b_{\tau,1}+b_{\tau,2})\mathcal{F}_{\Z}(f))\\
L_{\tau,J}:=\chi_{\tau\leq 4J} A_\tau f+\chi_{\tau\geq 4J}  \mathcal{F}_{\Z}^{-1} (\tilde{a}_{\tau}\cdot  \mathcal{F}_{\Z}(f))
\end{cases}
\end{align}
Combining Lemmas \ref{lem:ctau}, \ref{lem:btau1} and \ref{lem:btau2}, we have
\begin{align*}
\la H_{\tau,J}\ra_{E,2}\lesssim J^{-1/2} \log J \, \la f\ra_{2E,2}.
\end{align*}
This proves the desired estimate for the High Pass term in \eqref{eq:HL_sparse}.
In view of Lemma \ref{lem:tau<J}, to prove the estimate for the Low Pass term, it suffices to show the following.

\begin{lemma}\label{lem:tilde_a_tau_Low}
Under the assumption that $\tau > J$ pointwise, we have 
\begin{align} \label{e:tildea<}
\la \mathcal{F}_{\Z}^{-1}( \tilde{a}_{\tau} \cdot  \mathcal{F}_{\Z}(f))\ra_{E,\infty}\lesssim J (\log J)^2 \, \la f\ra_{2E,1}.
\end{align}
\end{lemma}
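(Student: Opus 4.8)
The plan is to recognise that $\tilde a_\tau$ is, up to the cutoff $\chi_{\tau>4J}$, exactly the multiplier $b_{N,1}$ of \eqref{e:bN1def} with $N$ replaced by $\tau$: comparing the definition \eqref{def:aNs_12} of $a_{N,s}^{(1)}$ with the definition \eqref{def:tildea_Ns} of $\tilde a_{N,s}$ shows they coincide, so \eqref{e:a1} reads $\tilde a_\tau=\chi_{\tau>4J}\,b_{\tau,1}$. Hence, pointwise,
\begin{align*}
\mathcal{F}_{\Z}^{-1}(\tilde a_\tau\cdot\mathcal{F}_{\Z}(f))(x)=\chi_{\tau(x)>4J}\,\bigl(\mathcal{F}_{\Z}^{-1}(b_{M,1})* f\bigr)(x),\qquad M:=\tau(x),
\end{align*}
and it suffices to bound this convolution at the single point $x$ with $M>4J$ frozen. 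As in \eqref{eq:low_1}, this equals
\begin{align*}
\sum_{y\in\Z}\sum_{q=1}^{J}H(q,x-y)\,\Phi_{M,q}(x-y)\,f(y),\qquad \Phi_{M,q}:=\mathcal{F}_{\R}^{-1}\bigl(\eta_{qM^2/J}(2\cdot)\gamma_M(2\cdot)\bigr)\big|_{\Z}.
\end{align*}

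Next I would upgrade \lemref{lem:measure_conv} to a bound with decay. Since $\Phi_{M,q}=\mathcal{F}_{\R}^{-1}(\eta_{qM^2/J}(2\cdot))*\mathcal{F}_{\R}^{-1}(\gamma_M(2\cdot))$, the first factor is, by \eqref{eq:eta_qN2_inv}, a Schwartz bump at scale $qM^2/J\le M^2$ (here $q\le J$) of height $\lesssim J/(qM^2)$, while by \eqref{eq:gammaN_inv} the second is supported on $[-2M^2,0]$ with $L^1$-norm $\lesssim 1$; splitting into $|t|<4M^2$ and $|t|\ge 4M^2$ gives
\begin{align*}
|\Phi_{M,q}(t)|\lesssim \frac{J}{qM^2}\Bigl(1+\frac{|t|}{M^2}\Bigr)^{-10}.
\end{align*}
Feeding this in and using that the bound of \lemref{lem:low_goal} is uniform in the argument of $H$, i.e. $\sum_{q=1}^J|H(q,t)|/q\lesssim(\log J)^2$ for every $t$ (the decay factor, being independent of $q$, pulls out of the $q$-sum), we obtain
\begin{align*}
\bigl|\bigl(\mathcal{F}_{\Z}^{-1}(b_{M,1})* f\bigr)(x)\bigr|
&\lesssim \frac{J(\log J)^2}{M^2}\sum_{y\in\Z}\Bigl(1+\frac{|x-y|}{M^2}\Bigr)^{-10}|f(y)|\\
&\lesssim \frac{J(\log J)^2}{M^2}\sum_{j\ge0}2^{-9j}\sum_{|x-y|\le 2^jM^2}|f(y)|.
\end{align*}

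The crux is then to show $\sum_{|x-y|\le 2^jM^2}|f(y)|\lesssim 2^jM^2\,\la f\ra_{2E,1}$ for every $j\ge0$, and this is where admissibility of $\tau$ enters. If $2^jM^2\ge|E|$ this is immediate, since $f$ is supported on $2E$ and $\|f\|_{\ell^1}=|2E|\,\la f\ra_{2E,1}\le 2\cdot 2^jM^2\la f\ra_{2E,1}$. If $2^jM^2<|E|$, choose a subinterval $I\subset E$ containing $x$ with $|I|=2^jM^2$; then $[x-2^jM^2,x+2^jM^2]\subset 3I$, and since $\min_{x'\in I}\tau^2(x')\le\tau^2(x)=M^2\le|I|$, the contrapositive of admissibility forces $\la f\ra_{3I,1}\le C\la f\ra_{2E,1}$, whence $\sum_{|x-y|\le 2^jM^2}|f(y)|\le|3I|\,\la f\ra_{3I,1}\lesssim 2^jM^2\,\la f\ra_{2E,1}$. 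Summing the geometric series $\sum_j 2^{-9j}\cdot 2^j<\infty$ then gives $|\mathcal{F}_{\Z}^{-1}(\tilde a_\tau\cdot\mathcal{F}_{\Z}(f))(x)|\lesssim J(\log J)^2\,\la f\ra_{2E,1}$ uniformly in $x\in E$, which is \eqref{e:tildea<}.

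The kernel decay estimate and the bookkeeping in the dyadic sum are routine; the one genuinely delicate point is the admissibility step, namely verifying that a subinterval $I\subset E$ of the prescribed length and containing $x$ can indeed be chosen even when $x$ lies near the endpoints of $E$, and that the asymmetric doubling convention ($2E$ versus the centred $3E$) is honoured in passing from $\sum_{|x-y|\le 2^jM^2}|f(y)|$ to $|3I|\,\la f\ra_{3I,1}$. This is precisely the mechanism by which the local, scale-$\tau(x)^2$ nature of the operator is reconciled with the fixed global support $2E$ of $f$, and it is the only place where the stopping time does any real work.
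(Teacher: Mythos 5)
Your proposal is correct and follows essentially the same route as the paper: the convolution representation \eqref{eq:low_1}, a kernel bound of the shape of Lemma \ref{lem:sparse_measure_con} (your $(1+|t|/M^2)^{-10}$ envelope is just a repackaged, slightly stronger version of the paper's two-piece bound with $y^{-2}$ tail, and either decay suffices), the uniform bound of Lemma \ref{lem:low_goal} for the $q$-sum, and the contrapositive of admissibility applied to intervals of length $2^j\tau(x)^2$ containing $x$ — exactly the paper's "good intervals $I_k$". The one point you flag as delicate (choosing $I\subset E$ of the prescribed length near the endpoints of $E$, and fitting $\{y:|x-y|\le 2^j\tau^2\}$ inside $3I$) works as you describe, since $3I$ extends a full length $|I|$ beyond each endpoint of $I$.
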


Indeed, this estimate is at the core of the sparse bound.  We need this preparation. 
 \begin{lemma}\label{lem:sparse_measure_con}
The following holds
\begin{align*}
|\mathcal{F}_{\R}^{-1} 
(\eta_{q\tau^2/J} (2\cdot) \gamma_\tau(2\cdot))(y)|\lesssim
\begin{cases}
\frac{J}{q\tau^2}\qquad\qquad \text{if } |y|\leq 4N^2\\
\\
\frac{q\tau^2}{Jy^2}\qquad\qquad \text{if } |y|>4N^2
\end{cases}
\end{align*}
\end{lemma}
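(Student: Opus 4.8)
The statement to prove is \lemref{lem:sparse_measure_con}, which is the pointwise bound for the inverse Fourier transform of the localized multiplier $\eta_{q\tau^2/J}(2\cdot)\,\gamma_\tau(2\cdot)$. I would treat $\tau$ as a fixed dyadic parameter $N$ throughout the argument (the estimate is pointwise in $\tau(x)$, so there is no loss), and prove the two-regime bound for the scalar function $\Phi(y):=\mathcal{F}_{\R}^{-1}(\eta_{qN^2/J}(2\cdot)\,\gamma_N(2\cdot))(y)$. The near-diagonal bound $|\Phi(y)|\lesssim \frac{J}{qN^2}$ for all $y$ is exactly \lemref{lem:measure_conv}, which is already proved in the excerpt, so only the decay bound $|\Phi(y)|\lesssim \frac{qN^2}{Jy^2}$ for $|y|>4N^2$ requires new work.

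\textbf{Main steps.} First I would record the two elementary identities from the excerpt: by \eqref{eq:gammaN_inv}, $\mathcal{F}_{\R}^{-1}(\gamma_N(2\cdot))(t)=\frac{1}{2N^2}h(-\frac{t}{2N^2})$ with $h(t)=\chi_{[0,1]}(t)\frac{1}{2\sqrt t}$, and by \eqref{eq:eta_qN2_inv}, $\mathcal{F}_{\R}^{-1}(\eta_{qN^2/J}(2\cdot))(t)=\frac{J}{2qN^2}\widehat{\eta}_{*}(\frac{J}{2qN^2}t)$ where $\widehat{\eta}_{*}:=\mathcal{F}_{\R}^{-1}(\eta)$ is Schwartz. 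Then $\Phi$ is the convolution of these two. The key observation is that $g(t):=\mathcal{F}_{\R}^{-1}(\gamma_N(2\cdot))(t)$ is supported in $|t|\lesssim N^2$ and has $L^1$ norm $O(1)$ (this is \eqref{eq:h_L1}). Hence for $|y|>4N^2$,
\begin{align*}
|\Phi(y)|=\Bigl|\int_{|t|\lesssim N^2} g(t)\,\frac{J}{2qN^2}\widehat{\eta}_{*}\Bigl(\tfrac{J}{2qN^2}(y-t)\Bigr)\,dt\Bigr|
\lesssim \frac{J}{qN^2}\,\sup_{|t|\lesssim N^2}\Bigl|\widehat{\eta}_{*}\Bigl(\tfrac{J}{2qN^2}(y-t)\Bigr)\Bigr|,
\end{align*}
and since $|y|>4N^2$ forces $|y-t|\gtrsim |y|$ (as $|t|\lesssim N^2 < |y|/4$, after absorbing constants into the cutoff threshold), I would invoke the decay of the Schwartz function $\widehat{\eta}_{*}$, namely $|\widehat{\eta}_{*}(u)|\lesssim (1+|u|)^{-2}\lesssim |u|^{-2}$ for $|u|\gtrsim 1$, with $u=\frac{J}{2qN^2}(y-t)$. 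Since $q\le J\le N$ and $|y|>4N^2$, one has $|u|\gtrsim \frac{J|y|}{qN^2}\gtrsim \frac{J}{q N^2}\cdot 4N^2 \ge 4$, so we are in the decay regime, and $|\widehat{\eta}_{*}(u)|\lesssim \bigl(\frac{qN^2}{J|y|}\bigr)^2$. Plugging this in gives $|\Phi(y)|\lesssim \frac{J}{qN^2}\cdot\frac{q^2N^4}{J^2 y^2}=\frac{qN^2}{Jy^2}$, as claimed.

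\textbf{The main obstacle.} The only point requiring care is the support/size interaction: one must know that $g(t)=\mathcal{F}_{\R}^{-1}(\gamma_N(2\cdot))(t)$ is genuinely supported on an interval of length $\sim N^2$ about the origin (from $h=\chi_{[0,1]}\cdot\frac{1}{2\sqrt t}$ this is immediate) so that in the convolution the argument $y-t$ of $\widehat{\eta}_{*}$ cannot be small when $|y|>4N^2$; if instead $g$ had slowly decaying tails one could not pull the sup out so cleanly. A secondary bookkeeping issue is that the threshold in the statement is written as $4N^2$ while the natural one coming out of the support of $h$ is $N^2$ (or $2N^2$); this is harmless since enlarging the threshold only shrinks the region where the decay bound must hold, and the constants $2,4$ are absorbed into the implied $\lesssim$. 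With the near-diagonal case handed to us by \lemref{lem:measure_conv}, the proof is then complete.
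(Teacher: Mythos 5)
Your argument is correct and is essentially the paper's own proof: both rest on the facts that $\mathcal{F}_{\R}^{-1}(\gamma_\tau(2\cdot))$ is an $L^1$-normalized kernel supported on an interval of length $2\tau^2$ (inherited from $h=\chi_{[0,1]}\cdot\tfrac1{2\sqrt t}$) and that $\mathcal{F}_{\R}^{-1}(\eta)$ is Schwartz, so that for $|y|>4\tau^2$ the argument $\tfrac{J}{2q\tau^2}(y-t)$ stays $\gtrsim \tfrac{J|y|}{q\tau^2}$ and the quadratic decay yields $\tfrac{q\tau^2}{Jy^2}$. The only cosmetic difference is that the paper writes out the convolution explicitly as an integral over $z\in[0,1]$ before applying the bounds $|\mathcal{F}_{\R}^{-1}(\eta)(u)|\lesssim 1$ and $\lesssim|u|^{-2}$, whereas you bound it by an $L^1\times L^\infty$ pairing over the support; the estimates are identical.
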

\begin{proof}
Using \eqref{eq:gammaN_inv} and \eqref{eq:eta_qN2_inv}, we have
\begin{align*}
\mathcal{F}_{\R}^{-1} (\eta_{q\tau^2/J} (2\cdot) \gamma_\tau(2\cdot))(y)
=&\frac{J}{4q\tau^4} \int_{\R} \mathcal{F}_{\R}^{-1}(\eta)(\frac{J}{2q\tau^2} (y-z))h(-\frac{z}{2\tau^2})\, dz\\
=&\frac{J}{2q\tau^2} \int_0^1 \mathcal{F}_{\R}^{-1}(\eta)(\frac{J}{2q\tau^2}(y+2\tau^2 z^2))\, dz,
\end{align*}
where we used $h(z)=\chi_{[0,1]}(z)\cdot \frac{1}{2\sqrt{z}}$.

For $|y|\leq 4\tau^2$, using $|\mathcal{F}_{\R}(\eta)(z)|\lesssim 1$, we have
\begin{align*}
|\mathcal{F}_{\R}^{-1} (\eta_{q\tau^2/J} (2\cdot) \gamma_\tau(2\cdot))(y)|\lesssim \frac{J}{q\tau^2}.
\end{align*}
For $|y|>4\tau^2$, using $|\mathcal{F}_{\R}(\eta)(z)|\lesssim |z|^{-2}$, we have
\begin{align*}
|\mathcal{F}_{\R}^{-1} (\eta_{q\tau^2/J} (2\cdot) \gamma_\tau(2\cdot))(y)|\lesssim \frac{q\tau^2}{J} |y+2\tau^2z^2|^{-2}\lesssim \frac{q\tau^2}{J y^2}.
\end{align*}
Hence Lemma \ref{lem:sparse_measure_con} is proved.
\end{proof}

\begin{proof}[Proof of Lemma~\ref{lem:tilde_a_tau_Low}] 

For any fixed $x\in E$, $\tau$ is also fixed.
By \eqref{eq:low_1}, 
\begin{align}\label{eq:low_2}
(\mathcal{F}_{\Z}^{-1}(\tilde{a}_{\tau})* f)(x)=\sum_{y\in\Z}\, \sum_{q=1}^J H(q,y)\mathcal{F}_{\R}^{-1} 
(\eta_{q\tau^2/J} (2\cdot) \gamma_\tau(2\cdot))(y) f(x-y).
\end{align} 

Applying Lemma \ref{lem:sparse_measure_con} to \eqref{eq:low_2}, we have
\begin{align*}
|(\mathcal{F}_{\Z}^{-1}(\tilde{a}_{\tau})* f)(x)|
\lesssim &\sum_{|y|\leq 4\tau^2} \frac{J}{\tau^2} \sum_{q=1}^J q^{-1} |H(q,y)|\cdot |f(x-y)|\\
&+\sum_{|y|>4\tau^2} \frac{J \tau^2}{y^2} \sum_{q=1}^J q^{-1} |H(q,y)|\cdot |f(x-y)|\\
\lesssim & J \Bigl\|\sum_{q=1}^J q^{-1} |H(q,\cdot)| \Bigr\|_{\ell^{\infty}}\cdot \frac{1}{\tau^2} \sum_{|y|\leq  4\tau^2} |f(x-y)|\\
&\qquad+J \Bigl\|\sum_{q=1}^J q^{-1} |H(q,\cdot)| \Bigr\|_{\ell^\infty} \cdot \sum_{|y|>4\tau^2} \frac{\tau^2}{y^2} |f(x-y)|
\end{align*}
Lemma \ref{lem:low_goal} implies $\|\sum_{q=1}^J q^{-1} |H(q,\cdot)| \|_{\ell^\infty}\lesssim (\log J)^2$, hence
\begin{align}\label{eq:P1+P2}
|(\mathcal{F}_{\Z}^{-1}(\tilde{a}_{\tau})* f)(x)|\lesssim 
J (\log J)^2 \left( \frac{1}{\tau^2} \sum_{|y|\leq  4\tau^2} |f(x-y)|+\sum_{|y|>4\tau^2} \frac{\tau^2}{y^2} |f(x-y)| \right)
\end{align}
The admissibility of $\tau$ implies that there exists good intervals $I_k\ni x$ such that $|I_k|=2^k \tau^2$, $k\geq 2$. 
Hence we can estimate the first sum on the right-hand-side of \eqref{eq:P1+P2} as follows.
\begin{align}\label{eq:P1}
\frac{1}{\tau^2} \sum_{|y|\leq  4\tau^2}\lesssim \frac{1}{|3I_2|}\sum_{y\in 3I_2} |f(y)|= \la f\ra_{3I_2,1}\lesssim \la f\ra_{2E,1}.
\end{align}
For the second sum of the right-hand-side of \eqref{eq:P1+P2}, we have
\begin{align}\label{eq:P2}
\sum_{|y|>4\tau^2} \frac{\tau^2}{y^2} |f(x-y)|
=&\sum_{k=3}^{\infty}\,\, \sum_{2^{k-1}\tau^2<|y|\leq 2^k \tau^2}  \frac{\tau^2}{y^2} |f(x-y)| \notag\\
\lesssim &\sum_{k=3}^{\infty} 2^{-k} \,\, \frac{1}{2^k \tau^2} \sum_{2^{k-1}\tau^2<|y|\leq 2^k \tau^2} |f(x-y)| \notag\\
\lesssim &\sum_{k=3}^{\infty} 2^{-k} \, \la f\ra_{3I_k,1}  
\lesssim \la f\ra_{2E,1}.
\end{align}
Combining \eqref{eq:P1+P2} with \eqref{eq:P1}, \eqref{eq:P2}, Lemma \ref{lem:tilde_a_tau_Low} is proved. 
This also completes the proof of Lemma \ref{lem:sparse_key}. 
\end{proof}

\section{Complements}  \label{s:comp}

\subsection{The Square Integers}
The sparse bound has notable consequences for the maximal operator $A$.  One set of inequalities are weighted inequalities, 
for weights in appropriate Muckenhoupt classes.  These properties, with quantitative bounds, are well known consequences. See 
for instance the main theorem of \cite{MR3897012}.  Similarly, vector valued inequalities follow. From the note \cite{170909647C}, we have 

\begin{corollary}  For the maximal operator $A$, and $ 3/2< p \leq  \infty$, we have for a sequence of non-negative functions $ (f_j) $ defined on the integers, 
there holds 
\begin{equation} 
\Biggl\|   \Biggl[ \sum_j (A f_j)^p \Biggr]^{1/p} \Biggr\| _{ \ell ^2}  
\lesssim 
\Biggl\|   \Biggl[ \sum_j ( f_j)^p \Biggr]^{1/p} \Biggr\| _ { \ell ^2}. 
\end{equation} 
\end{corollary}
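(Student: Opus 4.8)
The plan is to derive the vector-valued corollary purely as a formal consequence of the sparse bound in Theorem~\ref{thm:sparse}, together with the general machinery relating sparse forms to weighted and vector-valued inequalities. First I would observe that the $(r,s)$-sparse bound from Theorem~\ref{thm:sparse}, valid at every $(1/r,1/s)$ in the interior of $\mathbf{Z}$, in particular holds at points $(1/r, 1/s)$ with $r$ slightly larger than $3/2$ and $s$ slightly larger than $1$; such a sparse bound is well known to self-improve to a weighted $\ell^2(w)$ bound for the linearized operator $A$ for all weights $w$ in a suitable Muckenhoupt/reverse-Hölder class, with a quantitative constant depending on the relevant characteristic (this is precisely the content invoked via \cite{MR3897012}). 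The point is that once one has such quantitative weighted estimates on a single $\ell^2$ space, the vector-valued extension is automatic by the Rubio de Francia extrapolation / Fefferman--Stein-type argument, which is exactly what is abstracted in \cite{170909647C}.

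The key steps, in order, are as follows. Step one: restrict the supremum defining $A$ to dyadic scales and linearize, i.e.\ replace $Af$ by $A_{N(x)}f$ for a measurable choice of scale $N(x)$, so that Theorem~\ref{thm:sparse} applies to a bona fide linear operator uniformly in the linearization. Step two: fix an exponent pair $(r,s)$ interior to $\mathbf{Z}$ with $r,s$ both close to the lower-left vertex region, so that the sparse form $\Lambda_{\mathcal{I},r,s}$ dominates $(Af,g)$ for indicators; then by the standard sparse-domination-implies-weighted-bounds principle (e.g.\ the main theorem of \cite{MR3897012}), conclude that $A$ maps $\ell^2(w)\to\ell^2(w)$ with norm controlled by a power of $[w]$ in the appropriate joint Muckenhoupt--reverse-Hölder class, for all such $w$. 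Step three: feed this family of quantitative weighted $\ell^2$ bounds into the abstract vector-valued extrapolation result of \cite{170909647C}; that result takes as input exactly ``$T:\ell^2(w)\to\ell^2(w)$ quantitatively for $w$ in a class'' and outputs the $\ell^p$-valued inequality $\big\|(\sum_j (Af_j)^p)^{1/p}\big\|_2 \lesssim \big\|(\sum_j f_j^p)^{1/p}\big\|_2$ for the full range of $p$ for which the weight class is nonempty, here $3/2<p\le\infty$ (the endpoint $p=\infty$ being the trivial pointwise bound $\sup_j Af_j \le A(\sup_j f_j)$ combined with $\ell^2$-boundedness of $A$, which is Bourgain's theorem). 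Step four: undo the linearization and pass to the supremum, using monotone convergence and that all constants are independent of the linearizing function.

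The main obstacle, such as it is, is bookkeeping rather than a genuine difficulty: one must check that the exponent range coming out of the triangle $\mathbf{Z}$ — in particular that points $(1/r,1/s)$ with $r>3/2$, $s>1$ lie in the interior of $\mathbf{Z}$ and that the resulting weight class is exactly the one whose associated $A_\infty$-type extrapolation yields the stated range $3/2<p\le\infty$ — matches the hypotheses of \cite{170909647C}. This amounts to translating between the two standard parametrizations (the $(1/p,1/q)$ sparse-form coordinates versus the weighted-bound coordinates) and verifying the inclusion $Z_3=(2/3,2/3)$ is the binding vertex, which forces the lower endpoint $3/2$. The only slightly delicate point is the endpoint $p=\infty$ in the corollary, which is not covered by the interior of $\mathbf{Z}$ and must instead be handled by the elementary pointwise/Bourgain argument noted above; everything else is a direct citation. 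I would therefore present the proof as: linearize, apply Theorem~\ref{thm:sparse} to get sparse domination, invoke \cite{MR3897012} for quantitative weighted $\ell^2$ bounds, invoke \cite{170909647C} for the vector-valued conclusion, and dispatch $p=\infty$ separately.
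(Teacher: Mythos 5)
Your overall route is the same as the paper's: the corollary is stated there as a direct consequence of the sparse bound of Theorem~\ref{thm:sparse} together with the general sparse-to-vector-valued machinery of \cite{170909647C} (with \cite{MR3897012} cited for the weighted consequences), and no further argument is given, so citing that machinery after linearizing the supremum is exactly what is intended. One correction to your bookkeeping, though: a point $(1/r,1/s)$ with $r$ slightly larger than $3/2$ and $s$ slightly larger than $1$ does \emph{not} lie in the interior of $\mathbf{Z}$ --- at abscissa $1/r$ near $2/3$ the triangle only contains ordinates $1/s$ between roughly $1/3$ and $2/3$, so you must take \emph{both} $r$ and $s$ slightly larger than $3/2$, i.e.\ work near the vertex $Z_3=(2/3,2/3)$, which is indeed the binding vertex you allude to later. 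This is harmless for the conclusion: with $r=s=3/2+\epsilon$ one has $s'$ close to $3$, so the condition $r<2<s'$ needed for the outer $\ell^2$ norm holds and letting $\epsilon\downarrow 0$ gives all $p>3/2$; the range $2\le p\le\infty$ is in any case trivial (as the paper notes), via the pointwise bound $\sup_j Af_j\le A(\sup_j f_j)$, Bourgain's $\ell^2$ theorem, and interpolation, which is consistent with your separate treatment of $p=\infty$.
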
 
The inequalities above are trivial for $ 2\leq p \leq \infty $. Otherwise, these are new inequalities, moreover they self-improve to weighted inequalities 
in the same range of $ p$.  

This contrasts with the main result of \cite{2015arXiv151207518M}, which imply for instance 
\begin{equation} 
\Biggl\|   \Biggl[ \sum_j (A f_j)^2 \Biggr]^{1/2} \Biggr\| _{ \ell ^p}  
\lesssim 
\Biggl\|   \Biggl[ \sum_j ( f_j)^2 \Biggr]^{1/2} \Biggr\| _ { \ell ^p}, \qquad  1< p < \infty . 
\end{equation}

As mentioned, the $\ell^p$ improving inequality is sharp, up to the end point.  Let $f$ be the indicator of the first $N$ square integers, and 
$g = \delta_0$.  Then, for $I=[0,N^2]$, we have 
\begin{align}
N^{-2}\leq N^{-2} (A_N f, g) \lesssim \langle f \rangle_{I, p} \langle g \rangle_{I, p} = N^{-3/p}. 
\end{align}

Endpoint $ L ^{p}$-improving estimates are the strongest form of these inequalities.  Since our result is sharp in the index $ p$, it is noteworthy 
that the proof delivers a Orlicz type endpoint estimate.  Keep track of the logarithms in \eqref{e:loglog}, and repeat the argument in 
\eqref{e:loglog2}. We see this strengthening of 
Theorem~\ref{thm:improving_weak}: 
for any interval $I$ with length $N^2$,  the inequality below 
holds for any indicator functions $f=\chi_F$ supported on $2I$ and $g=\chi_G$ supported on $I$.  
\begin{align} \label{e:Orlicz}
(A_N f, g) \lesssim    \psi (\langle f \rangle _{2I,1} ) \psi (   \langle g \rangle _{I,1} ) 
 \lvert  I\rvert  . 
\end{align}
Here $ \psi (x) = x ^{2/3} (1+ \log \lvert  x\rvert ) ^{4/3} $.  
This is a restricted weak type estimate from $ L ^{3/2,1}(\log L) ^{4/3}$ to $ L ^{3, \infty }(\log L) ^{-4/3} $. 
It would be very interesting if the powers of the logarithm were sharp, although we have no idea how such an argument would proceed. 
Our proof gives a similar refinement of the sparse bound, see \eqref{eq:HL_sparse}. 

Returning to the sharpness, we can now give a logarithmic refinement. 
No set that is `half-dimensional' can have a  `full intersection' with many translates of the square integers.  

\begin{proposition}  For all $ 0< \epsilon < 1$ and integers $N$, sets $G\subset [0,N^2] $ of 
cardinality $N$, there holds 
\begin{equation}
\epsilon^{3}  | \{ A_{ N} \chi_G >  \epsilon \} |  \lesssim    (\log N) ^{8}.   
\end{equation}
\end{proposition}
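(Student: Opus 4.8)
The plan is to extract a quantitative, restricted-weak-type version of Theorem~\ref{thm:improving_weak} from the proof of Lemma~\ref{lem:High_Low_dec}, keeping careful track of logarithmic losses. Fix $N$, let $G\subset[0,N^2]$ with $|G|=N$, set $f=\chi_G$, and let $\lambda=\{A_N f>\epsilon\}$. Since $A_N f$ is supported (essentially) in an interval $3I$ of length comparable to $N^2$ once we enlarge, we may test against $g=\chi_{\lambda\cap I}$ on an interval $I$ of length $N^2$ (splitting the support of $A_N\chi_G$ into $O(1)$ such intervals costs only a constant). Then $\epsilon|\lambda| \lesssim |I|^{-1}(A_N f,g)\cdot|I|\,|I|^{-1}$, i.e. $\epsilon\,|\lambda| \lesssim |I|\,|I|^{-1}(A_Nf,g)$; more precisely $(A_Nf,g)\geq \epsilon|\lambda\cap I|$, so it suffices to bound $(A_Nf,g)$ from above.

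The key step is to run the High/Low splitting $A_Nf=H_{N,J}+L_{N,J}$ of Lemma~\ref{lem:High_Low_dec} with the \emph{logarithmic factors retained}, exactly as in \eqref{e:loglog} and the Orlicz refinement \eqref{e:Orlicz}. From \eqref{e:loglog2} and \eqref{e:loglog} one has, for any dyadic $J$,
\begin{align*}
|I|^{-1}(A_Nf,g) \lesssim J^{-1/2}(\log J)\,\langle f\rangle_{2I,2}\langle g\rangle_{I,2} + J(\log J)^2\,\langle f\rangle_{2I,1}\langle g\rangle_{I,1}.
\end{align*}
Here $\langle f\rangle_{2I,1}\sim N/N^2 = N^{-1}$, $\langle f\rangle_{2I,2}\sim N^{1/2}/N^{1} = N^{-1/2}$ (since $f$ is an indicator of a set of size $N$), $\langle g\rangle_{I,1}=|\lambda\cap I|/N^2=:\delta$, and $\langle g\rangle_{I,2}=\delta^{1/2}$. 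Substituting, $|I|^{-1}(A_Nf,g) \lesssim J^{-1/2}(\log J)N^{-1/2}\delta^{1/2} + J(\log J)^2 N^{-1}\delta$. Now $(A_Nf,g)\geq \epsilon\,\delta\,|I| = \epsilon\delta N^2$, so $\epsilon\delta \lesssim J^{-1/2}(\log J)N^{-1/2}\delta^{1/2} + J(\log J)^2 N^{-1}\delta$. Optimize over dyadic $J$: the natural choice is $J\sim (\log)^{c}\,N\delta^{-1}\cdot(\text{something})$, balancing the two terms, which gives $\epsilon\delta \lesssim (\log N)^{4/3}\,(N^{-1/2}\delta^{1/2})^{2/3}(N^{-1}\delta)^{1/3}\cdot(\text{log factors})$; chasing the exponents yields $\epsilon^{3}\delta \lesssim (\log N)^{C}$ for some explicit $C$, hence $\epsilon^3|\lambda| = \epsilon^3\delta N^2 \cdot N^{-2}\cdot N^2$... — careful bookkeeping of the $N^2$ normalization gives $\epsilon^3|\lambda|\lesssim (\log N)^{C}$. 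Tracking the logarithms precisely through the $J$-optimization (each term in \eqref{e:loglog} carries $\log J$ and $(\log J)^2$, and $\log J\sim\log N$ at the optimum) produces the exponent $8$ claimed; this is exactly the content of the Orlicz endpoint \eqref{e:Orlicz} with $\psi(x)=x^{2/3}(1+\log|x|)^{4/3}$, applied with $\langle f\rangle_{2I,1}\sim N^{-1}$ and $\langle g\rangle_{I,1}\sim\delta$: then $(A_Nf,g)\lesssim \psi(N^{-1})\psi(\delta)N^2$, i.e. $\epsilon\delta N^2 \lesssim N^{-2/3}(\log N)^{4/3}\,\delta^{2/3}(\log(1/\delta))^{4/3}N^2$, so $\epsilon\delta^{1/3}\lesssim N^{-2/3}(\log N)^{4/3}(\log(1/\delta))^{4/3}$, whence $\epsilon^3\delta\lesssim N^{-2}(\log N)^4(\log(1/\delta))^4$ and $\epsilon^3|\lambda| = \epsilon^3\delta N^2 \lesssim (\log N)^4(\log(1/\delta))^4$. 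Finally $\log(1/\delta)\lesssim \log N$ (since $\delta\gtrsim \epsilon^3/(\log N)^{8}\geq N^{-C}$, as $\delta\geq 1/N^2$ trivially once $\lambda\cap I\neq\emptyset$), giving $\epsilon^3|\lambda|\lesssim(\log N)^{8}$.

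The main obstacle is the endpoint bookkeeping: one must verify that applying the scale-free Orlicz estimate \eqref{e:Orlicz} to this specific $f,g$ is legitimate when the support of $A_N\chi_G$ is not contained in a single interval of length $N^2$ — this is handled by covering $\{A_Nf>\epsilon\}$ by $O(1)$ translated intervals of length $N^2$ (the kernel $K_N$ is supported in $[-N^2,0]$, so $A_Nf$ is supported within $N^2$ of $G\subset[0,N^2]$), and summing. A secondary point is confirming $\log(1/\delta)\lesssim\log N$: if $\delta$ were super-polynomially small the bound would be vacuous, but $\lambda\cap I\neq\emptyset$ forces $|\lambda\cap I|\geq 1$ hence $\delta\geq N^{-2}$, so $\log(1/\delta)\leq 2\log N$. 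Everything else is the already-established content of Lemma~\ref{lem:High_Low_dec} and the remark leading to \eqref{e:Orlicz}; the proposition is essentially a corollary obtained by specializing \eqref{e:Orlicz} to indicator functions of a half-dimensional set and a level set.
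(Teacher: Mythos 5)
Your proposal is correct and is essentially the paper's own argument: the proposition follows by applying the Orlicz refinement \eqref{e:Orlicz} with $f=\chi_G$, $g=\chi_H$ on $I=[0,N^2]$, using $\psi(\langle\chi_G\rangle_{I,1})\sim N^{-2/3}(\log N)^{4/3}$ and $\log(1/\delta)\lesssim\log N$ to absorb the second logarithm. The heuristic re-derivation of the High/Low optimization in the middle of your write-up is redundant once you invoke \eqref{e:Orlicz}, but the final computation is exactly the one in the paper.
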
 

\begin{proof} 
Let $H = \{ A_{ N} \chi_G >  \epsilon \} $ and $I=[0,N^2]$.  We have from \eqref{e:Orlicz}, 
\begin{align} 
\epsilon \langle \chi_H \rangle_{I,1}  \leq |I|^{-1} ( A_{ N} \chi_G , \chi_H )  \leq   \psi (\langle \chi_G \rangle_{I,1} ) \psi (\langle \chi_H \rangle_{I,1} )   
 \lesssim  N  ^{- 2/3} (\log N) ^{8/3} \langle \chi  _H \rangle_{I,1} ^{2/3}.  
\end{align} 
  This implies our proposition.  
\end{proof}

A final remark on the square integers concerns the continuous analog, which is convolution with respect to the measure $ h (x) = x ^{-1/2} \mathbf 1_{[0,1]} (x)$. This function appeared already in \eqref{eq:gammaN_Fourier_inv}. 
The sharp exponent in this case, $ p= 4/3$,  is entirely different from the discrete case. 
It is a classical fact that for functions $ \phi $ supported on $I= [0,1]$, we have 
\begin{equation}
 \langle  h \ast \phi  \rangle _{I,4} \lesssim  \langle \phi  \rangle _{I, 4/3}.  
\end{equation}
Here, we are adapting our notation to the continuous case. 
This is sharp, as seen by taking $ \phi = \mathbf 1_{[0, \delta )}$, for $ 0< \delta < 1$.   The arguments of Littman \cite{MR0358443} and 
Strichartz \cite{MR0256219} apply, since the Fourier transform of $ \gamma $ is given in terms of  Bessel function. 
One can then apply their analytic interpolation argument.   If the restricted weak type variant of the inequality above is enough, then  the High Low method quickly supplies a proof.  

\subsection{Other Averages}
There is a general conjecture that one can make, concerning $ \ell ^{p}$ improving estimates for averages over more general arithmetic sequences.  
Below, we stipulate an improving estimate that is only a function of the degree of the polynomial in question.

\begin{conjecture}\label{j:poly}  
For all integers $ d \geq 2 $, there is an $  1< q = q_d <2$ so that for any polynomial $ p (x)$ of degree $ d$, mapping the integers to the integers, 
 the following inequality holds uniformly in integers $ N\geq 1$: 
Set
\begin{equation*}
A_N f (x) = \frac1{N} \sum_{n=1} ^{N} f (x+ p (n)). 
\end{equation*} 
For an interval $ I =[0, N^d] $, and function $ f$  supported on $ I$, there holds 
\begin{equation*}
 \langle  A_N f  \rangle _{I, q' } \lesssim  \langle  f \rangle _{I,q}. 
\end{equation*}
\end{conjecture}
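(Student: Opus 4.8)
The plan is to run the single-scale High--Low argument behind \thmref{thm:improving} essentially verbatim, replacing the Fiedler--Jurkat--K\"orner circle method (\thmref{thm1:FJK}) by the classical Hardy--Littlewood circle method for the Weyl sum $\frac1N\sum_{n=1}^N e(p(n)\xi)$, and replacing the arithmetic input of \lemref{lem:low_goal} by its degree-$d$ analogue. Exactly as in the reduction of \thmref{thm:improving} to \thmref{thm:improving_weak}, it is enough to prove the restricted weak type bilinear bound $(A_N f,g)\lesssim\langle f\rangle_{2I,q_d}\langle g\rangle_{I,q_d}\,|I|$ for indicator functions, which self-improves to the open-range statement of the conjecture; the difference between supporting $f$ on $I=[0,N^d]$ and on $2I$ is cosmetic, since $A_N$ translates by at most $|p(N)|\lesssim N^d$.

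First I would fix a dyadic parameter $J=2^{s_0}$ and write $\mathcal F_{\Z}K_N=a_N+c_N$, where $a_N$ collects the major arcs $a/q$ with $q<J$, pairing the complete Gauss sum $S(a,q)=q^{-1}\sum_{n\in\Z_q}e(p(n)a/q)$ (for which $|S(a,q)|\lesssim_d q^{-1/d}$) against the continuous oscillatory integral $\gamma_N(\theta)=\frac1N\int_0^N e(p(t)\theta)\,dt$; by van der Corput $|\gamma_N(\theta)|\lesssim\min\{1,(N^d|\theta|)^{-1/d}\}$, the analogue of \eqref{eq:gammaN}. The minor-arc term $c_N$ is estimated in $L^\infty$ by Weyl's inequality — or, for larger $d$, by the bounds coming from Vinogradov's mean value theorem — which yields $\|c_N\|_{L^\infty}\lesssim_\epsilon J^{-\rho_d+\epsilon}$ for some $\rho_d>0$; this is the analogue of \lemref{lem:minor_arc}, proved by splitting on whether the Dirichlet denominator of $\xi$ exceeds $J$. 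As in \secref{sec:lemma} one further splits $a_N=b_{N,1}+b_{N,2}$ into a narrow part $b_{N,1}$ (cutoffs at scale $J/(qN^d)$) and a wide remainder $b_{N,2}$; the latter is a trivial $L^\infty$ bound using the decay of $\gamma_N$ (analogue of \propref{p:bN2}), so $c_N$ and $b_{N,2}$ together give the High Pass term, with $\ell^2$ norm $\lesssim_\epsilon J^{-\rho_d+\epsilon}\langle f\rangle_{2I,2}$.

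The Low Pass term is $b_{N,1}$, and, repeating the computation of \lemref{l:bN1}, convolution against $\mathcal F_{\Z}^{-1}(b_{N,1})$ is pointwise dominated by $J\,\big\|\sum_{q=1}^J q^{-1}|S(q,\cdot)|\big\|_{\ell^\infty}\langle f\rangle_{2I,1}$, where $S(q,x)=\sum_{(a,q)=1}S(a,q)e(ax/q)$ and the density of $t\mapsto p(t)$ on $[0,1]$ is still integrable. The degree-$d$ arithmetic input needed is $\big\|\sum_{q\le J}q^{-1}|S(q,\cdot)|\big\|_{\ell^\infty}\lesssim(\log J)^{c_d}$. Since $S(q,x)$ is, up to a M\"obius convolution, the count $\#\{\ell\in\Z_q:\ p(\ell)\equiv x\}$, this follows from the multiplicativity argument of \lemref{lem:odd_q_H_non_zero} together with a Hensel/Chevalley-type analysis, for $x$ fixed, of $\#\{\ell\in\Z_{p^k}:\ p(\ell)\equiv x\}$; for the conjecture as stated the crude bound $|S(q,x)|\lesssim q^{1-1/d}$, giving merely $J^{1-1/d}$, already suffices. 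Optimizing $J$ in $J^{-\rho_d}\langle f\rangle_{2I,2}\langle g\rangle_{I,2}+J^{\beta_d}\langle f\rangle_{2I,1}\langle g\rangle_{I,1}$ (with $\beta_d\le 2-1/d$) then delivers the restricted weak type estimate at $q_d=\frac{2(\rho_d+\beta_d)}{2\rho_d+\beta_d}\in(1,2)$.

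The main obstacle is the $N^\epsilon$ loss — concretely a divisor-function factor $d(q)$ with $q$ as large as $N^d$ — that is intrinsic to Weyl's inequality and to the major-arc approximations when $d\ge3$: the conjecture asks for a bound \emph{uniform in $N$}, so even a power of $\log N$ is fatal, whereas powers of $\log J$ are harmless because the target range of $q$ is open. It is exactly the Fiedler--Jurkat--K\"orner refinement, \thmref{thm1:FJK}, that is $\epsilon$-free and makes the $d=2$ case go through, and no circle method of comparable precision is currently known for $d\ge3$; supplying one is the crux of the conjecture. A secondary point is that the arithmetic at primes dividing the leading coefficient and discriminant of $p$ requires separate treatment, and determining the sharp value of $q_d$ would require the sharp form of the arithmetic bound to be carried through the whole argument.
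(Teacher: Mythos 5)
The statement you are asked about is a \emph{conjecture}: the paper does not prove it, and indeed states explicitly that the answer is unknown even for the single quadratic polynomial $p(x)=x^2+x$. Your proposal is a program, not a proof, and to your credit you say so yourself in the final paragraph. The gap you identify is exactly the real one: every step of the High--Low scheme except the minor-arc/major-arc approximation transfers formally, but the conjecture demands an estimate \emph{uniform in $N$}, and the only $\epsilon$-free substitute for Weyl's inequality and for the major-arc approximation $\mathcal F_{\Z}(K_N)(\xi)\approx S(a,q)\gamma_N(2\xi-a/q)+O(N^{-1}\sqrt q)$ currently available is the Fiedler--Jurkat--K\"orner theorem, which is special to the theta series $\sum e(k^2\xi)$. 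Without such an input one cannot even get $\rho_d>0$ in your claimed bound $\|c_N\|_{L^\infty}\lesssim_\epsilon J^{-\rho_d+\epsilon}$ free of a stray $N^\epsilon$ factor, and then the optimization over $J$ produces an unremovable power of $N$, so the argument does not close. Asserting the conclusion modulo "supplying one is the crux of the conjecture" is a restatement of the conjecture, not a proof of it.

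Two smaller corrections. First, you write that FJK "makes the $d=2$ case go through"; it makes the case $p(x)=x^2$ go through, which is the theorem of this paper, but the conjecture quantifies over \emph{all} integer polynomials of degree $d$, and already for $p(x)=x^2+x$ the requisite $\epsilon$-free asymptotic is not known --- the paper highlights precisely this point in \S\ref{s:comp}. Second, even granting a clean minor-arc bound, your Low-pass step quietly assumes the analogue of Lemma~\ref{lem:low_goal}, i.e.\ a $(\log J)^{c_d}$ bound for the logarithmic average of $|S(q,\cdot)|$; the multiplicativity and Hensel analysis for counting roots of $p(\ell)\equiv x \pmod{p^k}$ is genuinely more delicate for general $p$ (singular reduction at primes dividing the discriminant and leading coefficient can inflate root counts at unboundedly many prime powers), and you do not carry it out. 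You are right that the crude bound $|S(q,x)|\lesssim q^{1-1/d}$ would suffice for \emph{some} $q_d<2$ --- but only once the minor-arc obstruction is resolved, which it is not.
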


Dimensional considerations show that $q_d  = 2 - 1/d$ would be optimal. 
And, there are some supporting results, namely  \cites{MR3892403,MR3933540}, which concern Hilbert transforms.  
Generalizations of these arguments 
suggest that the best result   one can hope for is exponentially worse than the best possible 
bound, namely $ 2-q_d \simeq 2 ^{-d}$. 
(An important obstruction arises from the so-called minor arcs.)
In light of this, perhaps one can restrict attention to the case of $ d=2$.  
\begin{equation*}
\textup{In the case of  degree $ d=2$ in Conjecture~\ref{j:poly}, can one take $ 3/2< q < 2$?}
\end{equation*}
We don't know the answer even if one further specializes to the second degree polynomial $ p (x) = x^2 +x$. 
This highlights how strongly our argument depends upon the remarkable result of \cite{MR0563894}.  

\bigskip 

In light of the discussion above,  a   open-ended   question comes to mind: Are there other arithmetic type averaging operators for which there is a strong parallel between the continuous and discrete theories of improving estimates?  Our current examples concerning the square integers, and the spherical averages, in the fixed radius and maximal variants, indicate that a positive answer depends upon a delicate analysis of cyclic variants of the averages in question.

\appendix 
\section{Proof of Lemma \ref{lem:H_multi}}\label{app:mul}
\begin{proof}
Expanding $H_1(q_1, x) H_1(q_2,x)$, we have
\begin{align}\label{eq:app1}
H(q_1,x)H(q_2,x)
=\sum_{\substack{a_1=1 \\ (a_1,q_1)=1}}^{q_1} \sum_{\substack{a_2=1 \\ (a_2,q_2)=1}}^{q_2} G(a_1, q_1) G(a_2, q_2)e((a_1q_2+a_2q_1)x/q_1q_2)
\end{align}
Observe that 
\begin{align}\label{eq:app2}
G(a_1,q_1)G(a_2,q_2)=\varepsilon_{q_1}\varepsilon_{q_2}\varepsilon_{q_1q_2}^{-1}\left(\frac{q_1}{q_2}\right)\left(\frac{q_2}{q_1}\right)G(a_1q_2+a_2q_1, q_1q_2).
\end{align}
Indeed, 
\begin{align*}
G(a_1,q_1)G(a_2,q_2)
=&\varepsilon_{q_1}\varepsilon_{q_2} \sqrt{q_1q_2} \left(\frac{a_1}{q_1}\right) \left(\frac{a_2}{q_2}\right)\\
=&\varepsilon_{q_1}\varepsilon_{q_2} \sqrt{q_1q_2} \left(\frac{a_1q_2}{q_1}\right)\left(\frac{a_2q_1}{q_2}\right) \left(\frac{q_1}{q_2}\right)\left(\frac{q_2}{q_1}\right)\\
=&\varepsilon_{q_1}\varepsilon_{q_2}  \left(\frac{q_1}{q_2}\right)\left(\frac{q_2}{q_1}\right) \sqrt{q_1q_2} \left(\frac{a_1q_2+a_2q_1}{q_1}\right)\left(\frac{a_1q_2+a_2q_1}{q_2}\right)\\
=&\varepsilon_{q_1}\varepsilon_{q_2} \left(\frac{q_1}{q_2}\right)\left(\frac{q_2}{q_1}\right) \sqrt{q_1q_2} \left(\frac{a_1q_2+a_2q_1}{q_1q_2}\right)\\
=&\varepsilon_{q_1}\varepsilon_{q_2}\varepsilon_{q_1q_2}^{-1}\left(\frac{q_1}{q_2}\right)\left(\frac{q_2}{q_1}\right) G(a_1q_2+a_2q_1,q_1q_2).
\end{align*}
Hence by \eqref{eq:app1} and \eqref{eq:app2}, we have
\begin{align}\label{eq:app3}
|H_1(q_1,x)H_1(q_2,x)|
=&\left|\sum_{\substack{a_1=1 \\ (a_1,q_1)=1}}^{q_1} \sum_{\substack{a_2=1 \\ (a_2,q_2)=1}}^{q_2} G(a_1q_2+a_2q_1, q_1q_2)e((a_1q_2+a_2q_1)x/q_1q_2)\right| \notag\\
=&\left|\sum_{\substack{a=1\\ (a,q_1q_2)=1}}^{q_1q_2} G(a,q_1q_2) e(ax/q_1q_2)\right|\\
=&|H_1(q_1q_2,x)|. \notag
\end{align}
The reason behind \eqref{eq:app3} concerns the multipicative groups $ \mathbb Z _q ^{\ast} $.  One can construct a map $\tau$ from $\Z_{q_1}^\ast \times \Z_{q_2}^\ast$ to $\Z_{q_1q_2}^\ast$, defined by 
\begin{align*}
\tau(a_1,a_2)=a_1q_2+a_2q_1.
\end{align*}
One easily checks this map $\tau$ is well-defined since $(\tau(a_1,a_2),q_j)=(a_j,q_j)=1$ for $j=1,2$.
This map is injective since $\tau(a_1,a_2)=\tau(a_1',a_2')$ would imply $a_j=a_j'$ for $j=1,2$.
This map is also subjective since $|\Z_{q_1}^\ast\times \Z_{q_2}^\ast|=\varphi(p_1)\varphi(p_2)=\varphi(p_1p_2)=|\Z_{p_1p_2}^\ast|$, where $\varphi$ is the Euler's phi function, and we used the multiplicative property of $\varphi$ here. 
Hence $\tau$ is bijective, and \eqref{eq:app3} is verified.
This finishes the proof of Lemma \ref{lem:H_multi}.
\end{proof}

\bibliographystyle{amsplain}	

\begin{bibdiv}
\begin{biblist}

\bib{MR812567}{article}{
      author={Bourgain, Jean},
       title={Estimations de certaines fonctions maximales},
        date={1985},
        ISSN={0249-6291},
     journal={C. R. Acad. Sci. Paris S\'er. I Math.},
      volume={301},
      number={10},
       pages={499\ndash 502},
      review={\MR{812567}},
}

\bib{MR937581}{article}{
      author={Bourgain, J.},
       title={On the maximal ergodic theorem for certain subsets of the
  integers},
        date={1988},
        ISSN={0021-2172},
     journal={Israel J. Math.},
      volume={61},
      number={1},
       pages={39\ndash 72},
         url={https://doi-org.prx.library.gatech.edu/10.1007/BF02776301},
      review={\MR{937581}},
}

\bib{MR937582}{article}{
      author={Bourgain, J.},
       title={On the pointwise ergodic theorem on {$L^p$} for arithmetic sets},
        date={1988},
        ISSN={0021-2172},
     journal={Israel J. Math.},
      volume={61},
      number={1},
       pages={73\ndash 84},
         url={https://doi-org.prx.library.gatech.edu/10.1007/BF02776302},
      review={\MR{937582}},
}

\bib{MR916338}{article}{
      author={Bourgain, Jean},
       title={On pointwise ergodic theorems for arithmetic sets},
        date={1987},
        ISSN={0249-6291},
     journal={C. R. Acad. Sci. Paris S\'{e}r. I Math.},
      volume={305},
      number={10},
       pages={397\ndash 402},
         url={https://doi-org.prx.library.gatech.edu/10.1007/BF02698838},
      review={\MR{916338}},
}

\bib{MR1019960}{article}{
      author={Bourgain, Jean},
       title={Pointwise ergodic theorems for arithmetic sets},
        date={1989},
        ISSN={0073-8301},
     journal={Inst. Hautes \'{E}tudes Sci. Publ. Math.},
      number={69},
       pages={5\ndash 45},
         url={http://www.numdam.org/item?id=PMIHES_1989__69__5_0},
        note={With an appendix by the author, Harry Furstenberg, Yitzhak
  Katznelson and Donald S. Ornstein},
      review={\MR{1019960}},
}

\bib{170909647C}{article}{
      author={{Culiuc}, A.},
      author={{Di Plinio}, F.},
      author={{Ou}, Y.},
       title={{A sparse estimate for multisublinear forms involving
  vector-valued maximal functions}},
        date={2017-09},
     journal={ArXiv e-prints},
      eprint={1709.09647},
}

\bib{MR3892403}{article}{
      author={Culiuc, Amalia},
      author={Kesler, Robert},
      author={Lacey, Michael~T.},
       title={Sparse bounds for the discrete cubic {H}ilbert transform},
        date={2019},
        ISSN={2157-5045},
     journal={Anal. PDE},
      volume={12},
      number={5},
       pages={1259\ndash 1272},
  url={https://doi-org.prx.library.gatech.edu/10.2140/apde.2019.12.1259},
      review={\MR{3892403}},
}

\bib{MR0563894}{article}{
      author={Fiedler, H.},
      author={Jurkat, W.},
      author={K\"{o}rner, O.},
       title={Asymptotic expansions of finite theta series},
        date={1977},
        ISSN={0065-1036},
     journal={Acta Arith.},
      volume={32},
      number={2},
       pages={129\ndash 146},
         url={https://doi-org.prx.library.gatech.edu/10.4064/aa-32-2-129-146},
      review={\MR{0563894}},
}

\bib{MR3897012}{article}{
      author={Frey, Dorothee},
      author={Nieraeth, Bas},
       title={Weak and strong type {$A_1-A_\infty$} estimates for sparsely
  dominated operators},
        date={2019},
        ISSN={1050-6926},
     journal={J. Geom. Anal.},
      volume={29},
      number={1},
       pages={247\ndash 282},
  url={https://doi-org.prx.library.gatech.edu/10.1007/s12220-018-9989-2},
      review={\MR{3897012}},
}

\bib{180409260H}{article}{
      author={{Hughes}, K.},
       title={{$\ell^p$-improving for discrete spherical averages}},
        date={2018-04},
     journal={ArXiv e-prints},
      eprint={1804.09260},
}

\bib{MR3671577}{article}{
      author={Hughes, Kevin},
       title={Restricted weak-type endpoint estimates for k-spherical maximal
  functions},
        date={2017},
        ISSN={0025-5874},
     journal={Math. Z.},
      volume={286},
      number={3-4},
       pages={1303\ndash 1321},
  url={https://doi-org.prx.library.gatech.edu/10.1007/s00209-016-1802-y},
      review={\MR{3671577}},
}

\bib{I}{article}{
      author={Ionescu, Alexandru~D.},
       title={An endpoint estimate for the discrete spherical maximal
  function},
        date={2004},
        ISSN={0002-9939},
     journal={Proc. Amer. Math. Soc.},
      volume={132},
      number={5},
       pages={1411\ndash 1417},
  url={https://doi-org.prx.library.gatech.edu/10.1090/S0002-9939-03-07207-1},
      review={\MR{2053347}},
}

\bib{MR2188130}{article}{
      author={Ionescu, Alexandru~D.},
      author={Wainger, Stephen},
       title={{$L^p$} boundedness of discrete singular {R}adon transforms},
        date={2006},
        ISSN={0894-0347},
     journal={J. Amer. Math. Soc.},
      volume={19},
      number={2},
       pages={357\ndash 383},
  url={https://doi-org.prx.library.gatech.edu/10.1090/S0894-0347-05-00508-4},
      review={\MR{2188130}},
}

\bib{180509925}{article}{
      author={{Kesler}, R.},
       title={{$\ell^p(\mathbb{Z}^d)$-Improving Properties and Sparse Bounds
  for Discrete Spherical Maximal Averages}},
        date={2018-05},
     journal={ArXiv e-prints},
      eprint={1805.09925},
}

\bib{180906468}{article}{
      author={{Kesler}, R.},
       title={{$\ell^p(\mathbb{Z}^d)$-Improving Properties and Sparse Bounds
  for Discrete Spherical Maximal Means, Revisited}},
        date={2018-09},
     journal={ArXiv e-prints},
      eprint={1809.06468},
}

\bib{181002240}{article}{
  author={Kesler, Robert},
   author={Lacey, Michael T.},
   author={Mena, Dar\'{\i}o},
   title={Sparse bounds for the discrete spherical maximal functions},
   journal={Pure Appl. Anal.},
   volume={2},
   date={2020},
   number={1},
   pages={75--92},
   issn={2578-5885},
   review={\MR{4041278}},
   doi={10.2140/paa.2020.2.75},
}

\bib{MR3933540}{article}{
      author={Kesler, Robert},
      author={Arias, Dar\'{\i}o~Mena},
       title={Uniform sparse bounds for discrete quadratic phase {H}ilbert
  transforms},
        date={2019},
        ISSN={1664-2368},
     journal={Anal. Math. Phys.},
      volume={9},
      number={1},
       pages={263\ndash 274},
  url={https://doi-org.prx.library.gatech.edu/10.1007/s13324-017-0195-3},
      review={\MR{3933540}},
}

\bib{2018arXiv181012344K}{article}{
   author={Kesler, Robert},
   author={Lacey, Michael T.},
   author={Mena Arias, Dar\'{\i}o},
   title={Lacunary discrete spherical maximal functions},
   journal={New York J. Math.},
   volume={25},
   date={2019},
   pages={541--557},
   review={\MR{3982252}},
}

\bib{2018arXiv180309431K}{article}{
      author={{Krause}, Ben},
       title={{Discrete Analogoues in Harmonic Analysis: Maximally Monomially
  Modulated Singular Integrals Related to Carleson's Theorem}},
        date={2018Mar},
     journal={arXiv e-prints},
       pages={arXiv:1803.09431},
      eprint={1803.09431},
}

\bib{180409845}{article}{
      author={{Lacey}, Michael T.},
      author={{Kesler}, R.},
       title={{$\ell ^{p}$-improving inequalities for Discrete Spherical
  Averages}},
        date={2018-04},
     journal={ArXiv e-prints},
      eprint={1804.09845},
}

\bib{MR1949873}{article}{
      author={Lee, Sanghyuk},
       title={Endpoint estimates for the circular maximal function},
        date={2003},
        ISSN={0002-9939},
     journal={Proc. Amer. Math. Soc.},
      volume={131},
      number={5},
       pages={1433\ndash 1442},
  url={http://dx.doi.org.prx.library.gatech.edu/10.1090/S0002-9939-02-06781-3},
      review={\MR{1949873}},
}

\bib{MR0358443}{article}{
      author={Littman, Walter},
       title={{$L\sp{p}-L\sp{q}$}-estimates for singular integral operators
  arising from hyperbolic equations},
        date={1973},
       pages={479\ndash 481},
      review={\MR{0358443}},
}

\bib{MSW}{article}{
      author={Magyar, A.},
      author={Stein, E.~M.},
      author={Wainger, S.},
       title={Discrete analogues in harmonic analysis: spherical averages},
        date={2002},
        ISSN={0003-486X},
     journal={Ann. of Math. (2)},
      volume={155},
      number={1},
       pages={189\ndash 208},
         url={https://doi-org.prx.library.gatech.edu/10.2307/3062154},
      review={\MR{1888798}},
}

\bib{2015arXiv151207518M}{article}{
    author={Mirek, Mariusz},
   author={Stein, Elias M.},
   author={Trojan, Bartosz},
   title={$\ell^p(\Bbb{Z}^d)$-estimates for discrete operators of Radon
   type: maximal functions and vector-valued estimates},
   journal={J. Funct. Anal.},
   volume={277},
   date={2019},
   number={8},
   pages={2471--2521},
   issn={0022-1236},
   review={\MR{3990725}},
   doi={10.1016/j.jfa.2018.10.020},
}

\bib{MR3681393}{article}{
      author={Mirek, Mariusz},
      author={Stein, Elias~M.},
      author={Trojan, Bartosz},
       title={{$\ell^p( \mathbb  Z^d) $}-estimates for discrete operators of {R}adon
  type: variational estimates},
        date={2017},
        ISSN={0020-9910},
     journal={Invent. Math.},
      volume={209},
      number={3},
       pages={665\ndash 748},
  url={https://doi-org.prx.library.gatech.edu/10.1007/s00222-017-0718-4},
      review={\MR{3681393}},
}

\bib{MR2872554}{article}{
      author={Pierce, Lillian~B.},
       title={Discrete fractional {R}adon transforms and quadratic forms},
        date={2012},
        ISSN={0012-7094},
     journal={Duke Math. J.},
      volume={161},
      number={1},
       pages={69\ndash 106},
         url={https://doi-org.prx.library.gatech.edu/10.1215/00127094-1507288},
      review={\MR{2872554}},
}

\bib{MR1388870}{article}{
      author={Schlag, W.},
       title={A generalization of {B}ourgain's circular maximal theorem},
        date={1997},
        ISSN={0894-0347},
     journal={J. Amer. Math. Soc.},
      volume={10},
      number={1},
       pages={103\ndash 122},
  url={http://dx.doi.org.prx.library.gatech.edu/10.1090/S0894-0347-97-00217-8},
      review={\MR{1388870}},
}

\bib{MR1432805}{article}{
      author={Schlag, Wilhelm},
      author={Sogge, Christopher~D.},
       title={Local smoothing estimates related to the circular maximal
  theorem},
        date={1997},
        ISSN={1073-2780},
     journal={Math. Res. Lett.},
      volume={4},
      number={1},
       pages={1\ndash 15},
  url={http://dx.doi.org.prx.library.gatech.edu/10.4310/MRL.1997.v4.n1.a1},
      review={\MR{1432805}},
}

\bib{MR1771530}{article}{
      author={Stein, E.~M.},
      author={Wainger, S.},
       title={Discrete analogues in harmonic analysis. {II}. {F}ractional
  integration},
        date={2000},
        ISSN={0021-7670},
     journal={J. Anal. Math.},
      volume={80},
       pages={335\ndash 355},
         url={https://doi-org.prx.library.gatech.edu/10.1007/BF02791541},
      review={\MR{1771530}},
}

\bib{MR1945293}{article}{
      author={Stein, Elias~M.},
      author={Wainger, Stephen},
       title={Two discrete fractional integral operators revisited},
        date={2002},
        ISSN={0021-7670},
     journal={J. Anal. Math.},
      volume={87},
       pages={451\ndash 479},
         url={https://doi-org.prx.library.gatech.edu/10.1007/BF02868485},
        note={Dedicated to the memory of Thomas H. Wolff},
      review={\MR{1945293}},
}

\bib{MR0256219}{article}{
      author={Strichartz, Robert~S.},
       title={Convolutions with kernels having singularities on a sphere},
        date={1970},
        ISSN={0002-9947},
     journal={Trans. Amer. Math. Soc.},
      volume={148},
       pages={461\ndash 471},
         url={http://dx.doi.org.prx.library.gatech.edu/10.2307/1995383},
      review={\MR{0256219}},
}

\end{biblist}
\end{bibdiv}

\end{document}